\newcommand{\dx}{{\mathrm d}}
\newcommand{\R}{{\mathbb R}}
\newcommand{\N}{{\mathbb N}}
\newcommand{\E}{{\mathbb E}}
\def\ps@pprintTitle{%
  \let\@oddhead\@empty
  \let\@evenhead\@empty
  \let\@oddfoot\@empty
  \let\@evenfoot\@oddfoot
}
\newtheorem{Theorem}{Theorem}[section]
\newtheorem{Proposition}[Theorem]{Proposition}
\newtheorem{Definition}[Theorem]{Definition}
\newtheorem{Corollary}[Theorem]{Corollary}
\newtheorem{Lemma}[Theorem]{Lemma}
\newtheorem{Remark}[Theorem]{Remark}
\newtheorem{Assumption}[Theorem]{Assumption}
\begin{document}

\begin{frontmatter}



\title{Functional approximation of the marked Hawkes risk process}


\author[1]{Mahmoud Khabou} 

\affiliation[1]{organization={Imperial College London},
            addressline={180 Queen's Gate, South Kensington}, 
            city={London},
            postcode={SW7 2AZ}, 
            country={United Kingdom}}

\author[2]{Laure Coutin} 

\affiliation[2]{organization={Université Toulouse III - Paul Sabatier, IMT UMR CNRS 5219},
            addressline={118 Route de Narbonne}, 
            city={Toulouse},
            postcode={31400}, 
            country={France}}

\begin{abstract}
The marked Hawkes risk process is a compound point process for which the occurrence and amplitude of past events impact the future. Thanks to its autoregressive properties, it found applications in various fields such as neuosciences, social networks and insurance.
Since data in real life is acquired over a discrete time grid, we propose a strong discrete-time approximation of the continuous-time Hawkes risk process obtained be embedding from the same Poisson measure. We then prove trajectorial convergence results both in some fractional Sobolev spaces and in the Skorokhod space, hence extending the theorems proven in the literature. We also provide upper bounds on the convergence speed with explicit dependence on the size of the discretisation step, the time horizon and the regularity of the kernel.
\end{abstract}



\begin{keyword}
Hawkes process \sep Euler scheme \sep Sobolev space \sep Skorokhod space



\end{keyword}

\end{frontmatter}



\section{Intoduction}
Initially introduced as a model for contagious events \cite{3b77a5cc-11e2-3e0a-b8e5-ecf028e7f216}, continuous-time linear Hawkes processes found applications in many fields involving self or cross excitation, such as portfolio credit risk \cite{errais2010affine}, microstructure price dynamics \cite{LEE2017154}, social media networks \cite{LOUZADAPINTO201686} and earthquakes \cite{doi:10.1080/01621459.1988.10478560}. The Hawkes process was then extended to the nonlinear setting in the seminal paper \cite{BM}, allowing thus for a more general dependence on the past including self-inhibition, in opposition to the mere affine dependence allowed by linear process. This generalization comes at a price: the nonlinear Hawkes process lacks the Galton-Watson framework as well as closed formulae for its expected value and covariance \cite{hillairet2023explicit}. Nonetheless, nonlinear Hawkes processes have found applications in fields where self and cross inhibition are crucial, like neuroscience \cite{LAMBERT20189}.\\
As the quantity of data involving bin counts (or count series); that is series representing the number of events observed on regularly spaced time intervals, has been increasing, appropriate discrete-time models became a relevant object of research. In the context of Hawkes processes, a number of count series models has been proposed to capture the self exciting (or inhibiting) aspects of a given dynamics. One of them is the integer auto-regressive process of order $p$ (INAR($p$)) \cite{d5dd0536-ae69-3f80-a080-f512bf5a819d}, whose generalization of the infinity order (INAR($\infty$)) has been proven to be a discrete-time version of linear Hawkes processes (\textit{cf.} \cite{kirchner} and the convergence results therein). In the nonlinear case, the Markovian Poisson autoregressions studied in \cite{fokianos2012nonlinear} can be seen as discrete-time version of nonlinear Hawkes processes with exponential kernels, despite the connection between the two processes not being stated in that article. As a generalization, a class of Poisson autoregressions is proposed in \cite{mahmoud}. This class of processes is then proven to be a weak approximation of Hawkes processes with Erlang kernels (that is the product of a polynomial and an exponential) in the Skorokhod topology. In this article, we suggest a straightforward approach based on the intuitive discretisation of a stochastic integral. To give an explicit illustration, we start by recalling the definition of a continuous-time Hawkes process. Let $N=(N_t)_{t\in [0,T]}$ be a point process observed on a time interval $[0,T]$ and measurable with respect to its canonical filtration $\mathcal F^N$. Assume that $N$ has an intensity, that is a predictable process measuring the propensity of $N$ to jump in the near future, or informally 
\begin{equation}
    \label{eq:informal}
    \lambda_t \dx t = \mathbb E \left[ \dx N_t | \mathcal F^N_{t-}\right ],
\end{equation}
where $\dx N_t= N_{t+\dx t} -N_t$  takes the values $1$ or $0$, depending on the presence of a jump at time $t$. Obviously, the larger $\lambda_t$ is at a given time $t$, the more likely $N_t$ is to jump and vice versa.\\

We say that $N$ is a Hawkes process of kernel $h$ and jump-rate $\psi$ if the intensity takes the form 
\begin{equation}
    \label{eq:intensity}
    \lambda_t=\psi \left(\mu+\int_0^{t-}h(t-s) \dx N_s\right)\quad t\in [0,T]
\end{equation}
where the integral is taken in the Stieltjes sense. We take $\mu$ a constant playing the role of a baseline level, $\psi$  positive Lipschitz and $h$  integrable. Up to some standard stability hypothesis that will be stated in the next section, it is possible to build the Hawkes process on $\mathbb R_+$ and show that, just like the Poisson process, it is of order $O(T)$ on average. \\
In this article, we work with a discrete-time model entirely based on the Riemann sum approximation of integral \eqref{eq:intensity}. Indeed, given a discretisation time step $\Delta > 0$, the infinitesimal increment $\dx N_t$ can be seen as the number of events observed in the "small" time interval $\left( n \Delta,(n+1) \Delta  \right]$ where $n=\lfloor \frac{t}{\Delta} \rfloor$. 
Giving a discrete approximation of $N$ is then equivalent to giving a sequence of bin counts $(X_0, X_1,\cdots, X_M)$, where $M= \lfloor \frac{T}{\Delta} \rfloor$. Knowing all of the past count values, the count series $X_n$ is simulated according to a Poisson distribution of parameter $\Delta l_n^\Delta$ where
 the expression of the (discrete) intensity is 
$$l^\Delta_n=\psi \left( \mu + \sum _{k=1}^{n-1}h\left(\Delta(n-k)\right) X_k\right),$$
ensuring that it is predictable with respect to the filtration generated by $(X_0,\cdots, X_n)$.\\

The choice of the Poisson distribution $\mathcal P (\Delta l^\Delta_n)$ guarantees that $\Delta l ^\Delta_n= \E [X_n| X_0,\cdots,X_{n-1}]$ (which is the discrete equivalent of $\eqref{eq:informal}$) and is preferred to the more trivial choice of the uniform distribution $\mathcal U (\Delta l ^\Delta _n)$ used in \cite{SEOL2015223} for the following reasons: 
\begin{enumerate}
    \item A priori, we do not have a guarantee that $\Delta l^\Delta_n <1$ .
    \item Even for reasonably small time-steps $\Delta$, one can always expect to see two or more events in a given time bin, especially if events are clustered due to self-excitation. 
\end{enumerate}
We then embed the discrete time process back into the continuous time setting by means of càdlàg (right continuous with left limits) embedding, thus obtaining a new process $N^\Delta$ defined as $$N^\Delta_t= \sum_{k=0}^{\lfloor t/\Delta\rfloor }X_k.$$  Since the new proccess  has a discontinuous trajectory, the uniform distance fails to capture its proximity to the original process $N$, as two very similar trajectories will have a large distance in the uniform metric as soon as they have one discontinuity that does not take place at the exact same time. This is why, we provide approximation results in two different spaces that are more adapted to càdlàg processes: the fractional Sobolev space and the Skorokhod space, both of which will be defined in Section \ref{sec:main_result}. \\

To the best of our knowledge, this is the first work that provides strong approximation results for Hawkes processes, the other two \cite{mahmoud,kirchner} proving weak convergence in the Skorokhod metric. Furthermore, the bounds are provided in this article with explicit dependence on the time step $\Delta$ and the time horizon $T$, making them useful for numerical applications. \\
It is also worthwhile to highlight that our framework is different from the classical approximation results of Poisson jump-diffusion SDEs, because inter-arrival times cannot be explicitly simulated for Hawkes processes (unless the process is linear with an exponential kernel \cite{DZ}. \\

The article is organized as follows: In Section \ref{sec:def} we give the rigorous definitions of the continuous-time and discrete-time Hawkes processes, as results of thinning from the same underlying Poisson randomness. In Section \ref{sec:main_result}, explicit bounds on the Sobolev and Skorokhod distances between the continuous-time Hawkes process and its discrete-time approximation are given. Sections \ref{sec:appendix} and \ref{sec:lemmata} contain the different proofs that were necessary to obtain the main results. 

\section{Definitions}
\label{sec:def}
In the two following subsections, we define the compound marked Hawkes process, both in continuous time and discrete time using the Poisson embedding idea of \cite{BM}. The main source of randomness for these two processes is a tri-dimensional Poisson measure $P$ that takes values in the configuration space 
$$\Omega := \left \{ \omega = \sum_{i=1}^{n} \delta_{(\tau'_i, \theta_i, y_i)}, 0 < \tau'_1 < \cdots <\tau'_n, (\theta_i,y_i) \in \R_+ \times \R \text { and } n \in \mathbb N \cup \{+\infty \}\right \}.$$
Given a non negative Borel measure $\nu$ on $\R$ such that $\nu (\R)=1$, we take $\mathbb P $ to be the probability measure under which the point measure $P$ defined as 
$$P\left((0,t], (0,\theta], (-\infty,y] \right)(\omega) = \omega ((0,t], (0,\theta], (-\infty,y]), \quad (t,\theta,y) \in \R_+^2 \times \R$$
is a Poisson measure with intensity $\dx t \times \dx \theta \times \nu(\dx y) $. We also let $\mathcal F_t=\sigma \left(P(\mathcal T \times  S), \mathcal T \subset \mathcal B ((0,t]), S \in \mathcal B (\R_+ \times \R) \right)$
be the filtration associated with $P$. Throughout this paper, the conditional expectation knowing $\mathcal F_t$ is denoted by $\E_t$. \\
\subsection{The continuous time setting}
For a fixed $T>0$, let $h$ be a function in $L_1([0,T])$. Let $\psi$ be a positive $L-$Lipschitz function on $\R$ and $b$ be a positive Borel function on $\R$. We now state the stability assumption on the aforementioned elements.
\begin{Assumption}
\label{ass:stability}
We have that 
$$\rho_h:=L\|h\|_1\E[b(Y)]<1$$
where $Y$ is a random variable of distribution $\nu$ and $\|h\|_1= \int_0^T |h(t) |\dx t.$ We also assume that $\nu$ has a finite first moment, that is $\E |Y| <+\infty$.

\end{Assumption}

We now give the definition of the marked 
 compound Hawkes process (or risk) as a result of thinning from the underlying tri-dimensional Poisson measure $P$. The procedure is now standard (see for example \cite{BM} or \cite{1056305}) and gives the Hawkes process as the unique pathwise solution of a stochastic differential equation (SDE) examined in more detail in \cite{HILLAIRET202389}. 
 \begin{Definition}
 \label{def:hawkes_continuous}
 Let $P$ be a tri-dimensional Poisson measure on $\R_+^2 \times \R$ of intensity $\dx t \dx\theta \nu(\dx y)$.
     Fix $T>0$  a time horizon and let $h \in L_1 ([0,T])$, $\psi: \R \to \R_+$ 
 $L-$Lipschitz and $b:\R \to \R_+$ such that Assumption  \ref{ass:stability} is in force. The SDE on $[0,T]$
\begin{equation*}
    \begin{cases}
        R_t &=\int_{(0,t] \times \R_+ \times \R} y\mathds 1_{\theta \leq \lambda _s} P(\dx s, \dx \theta, \dx y)\\
        \lambda_t &=\psi \left( \int_{[0,t) \times \R_+ \times \R} h(t-s) \mathds 1_{\theta \leq \lambda _s} b(y)P(\dx s, \dx \theta, \dx y) \right)
    \end{cases}
\end{equation*}
has a unique pathwise solution $(R, \lambda)$ such that $R$ is $\mathcal F$ measurable and $\lambda$ is $\mathcal F$ predictable. \\
We say that $R$ is a marked Hawkes risk of kernel $h$, jump-rate $\psi$ and claim size distribution $\nu$. We call $\lambda$ the intensity of $R$ and $b$ the marks modulation function. \\
Furthermore, we define the marked simple Hawkes process $N$ as
$$N_t :=\int_{(0,t] \times \R_+ \times \R} \mathds 1_{\theta \leq \lambda _s} P(\dx s, \dx \theta, \dx y),~~t\in [0,T]$$
and the auxiliary process as 
$$\xi_t:=\int_{(0,t] \times \R_+ \times \R} b(y)\mathds 1_{\theta \leq \lambda _s} P(\dx s, \dx \theta, \dx y).$$
 \end{Definition}
 \begin{Remark}
     The assumption that $b$ is positive is superfluous from a mathematical point of view and can be omitted (up to the introduction of absolute values). However, we chose to keep it here because we want the excitation/inhibition to be determined by the sign of the kernel $h$, assuming that $\psi$ is monotonous. \\
     
 \end{Remark}

     If $\psi(x)=\mu+ x, x \in \R_+$ for a positive constant $\mu$  under the constraint $h\geq 0$ we say that the Hawkes process/risk is \textit{linear}. In this particular case, the Hawkes dynamics have a branching process representation.  Despite the fact that the first two moments of the process are explicitly known (up to the computation of an infinite sum of convolutions of $h$, \textit{cf.} \cite{BACRY20132475} and \cite{HILLAIRET202389}), linear Hawkes processes do not allow for self-inhibition. \\
 
 The marked Hawkes risk can also be defined in a more elementary (yet informal) way without using the thinning procedure. Let $N$ be a simple point process on $[0,T]$ and $(Y_k)_{k\in \mathbb N}$ a family of \textit{iid} random variables of common distribution $\nu$. Note that, we do not assume the variables $(Y_k)_{k\in \mathbb N}$ and the process $N$ to be independent. $N$ is said to be a marked simple Hawkes process if its intensity $\lambda$ follows the dynamics
 $$\lambda_t = \psi \left( \sum_{\tau_i<t} h(t-\tau_i) b(Y_i)\right),$$
 where $(\tau_i)_{i\in \mathbb N}$ are the arrival times of the points of $N$. Note that the intensity can also be put under the integral form 
 $$\lambda_t = \psi \left( \int_{0}^{t-} h(t-s) \dx \xi_s\right),$$
 where $\xi$ is the auxiliary process $\xi_t=\sum_{k=1}^{N_t}b(Y_k),,~~t\in [0,T]$.\\
 The risk process is simply the aggregation of all the marks
 $$R_t=\sum_{k=1}^{N_t} Y_k,,~~t\in [0,T].$$

 The marked Hawkes process is useful in modelling phenomena where the intensity is not only impacted by the realisation of an event $\tau_i$, but also by its "severity" $Y_i$. For instance, the choice $b(y)= \mathds 1 _{y\geq a}$ means that only claims of a size larger than a given threshold $a$ have an impact on the intensity. Karabash and Zhu \cite{karabash} provided limit theorems for a general class of marked Hawkes processes, albeit in the linear setting.\\
 This constitutes a generalization of the compound Hawkes process usually studied in the literature (\textit{cf.} \cite{errais2010affine}, \cite{khabou2023normal}) where the marks $Y$ do not impact the intensity.\\
 If the modulation function $b$ is chosen to be equal to the constant $1$, we can retrieve the usual unmarked intensity $\lambda_t=\psi\left(\int_0^{t-}h(t-s) \dx N_s \right)$. Similarly, the choice $Y \equiv 1$ ensures that $R\equiv N$, hence we will focus exclusively on $R$. \\

The goal is to suggest an intuitive discretization scheme on $[0,T]$ and to yield a bound on the distance between this scheme and the continuous time process in a convenient functional space.

\subsection{The discrete time setting}
Throughout this paper, the bounded interval $[0,T]$ is discretised into $M\in \N^*$ equidistant intervals $(t_i,t_{i+1}]$ of length $\Delta$, where $\Delta=T/M$. For a given $t\in [0,T]$, we define $ (t)_ \Delta = \lfloor t/\Delta \rfloor \Delta $ to be its projection on the time grid. We also define $n_t=\lfloor t/\Delta \rfloor.$\\
For a $k\leq M$, we set $h_k=h(k\Delta)$. We omit the dependence on $\Delta$ to avoid cumbersome notation. Before defining the discrete-time marked Hawkes risk, we give the following stability assumption
\begin{Assumption}
    \label{ass:stability_discrete}
    Assume that $$\rho_{h,\Delta}:=L \sum_{k=1}^M |h_k| \Delta \E [b(Y)]<1,$$
    where $L$ is the Lipschitz coefficient of $\psi$ and $Y$ is a random variable of distribution $ \nu$.  We also assume that $\nu$ has a finite first moment, that is $\E |Y| <+\infty$.
\end{Assumption}
Just like the continuous-time case, we build the discrete-time marked Hawkes risk using the same tri-dimensional Poisson measure $P$. 

\begin{Definition}
    \label{def:hawkes_discrete}
     Let $P$ be a tri-dimensional Poisson measure on $\R_+^2 \times \R$ of intensity $\dx t \dx\theta \nu(\dx y)$.
     Fix $T>0$  a time horizon and let $h \in L_1 ([0,T])$, $\psi: \R \to \R_+$ 
 $L-$Lipschitz and $b:\R \to \R_+$ such that Assumption  \ref{ass:stability_discrete} is in force. Fix $M \in \mathbb N$ and let $\Delta= T/M$. \\
 Define the measurable (with respect to the filtration $(\mathcal F_{n\Delta})_{n=0,\cdots,M}$) sequence $(X^\Delta_k)_{k=0,\cdots,M}$ and predictable sequence $(l^\Delta_k)_{k=0,\cdots,M}$ recursively
 \begin{equation}
 \label{eq:def_discrete}
     \begin{cases}
         X^\Delta_0&=0, ~~
         l^\Delta_0=l^\Delta_1=\psi(0), ~~ D^{\Delta}_0=0,\\
         X^\Delta_n&=\int_{((n-1)\Delta,n\Delta]\times \R_+ \times \R} b(y) \mathds 1_{\theta \leq l^\Delta_n} P (\dx s , \dx \theta, \dx y ),\\
         l^\Delta_n&=\psi \left (\sum _{k=1}^{n-1} h_{n-k}X_k\right),\\
         D^{\Delta}_n&= \int_{((n-1)\Delta,n\Delta]\times \R_+ \times \R} \mathds 1_{\theta \leq l^\Delta_n} P (\dx s , \dx \theta, \dx y )
     \end{cases}
     .
 \end{equation}
 The discrete-time Hawkes risk $R^\Delta$ and intensity $\lambda^\Delta$ are the càdlàg piecewise constant processes defined as
 \begin{equation*}
     \begin{cases}
         \lambda^\Delta_t&=\lambda^\Delta_{(t)_\Delta}=l^\Delta_{n_t}\\
         R^\Delta_t&=R^\Delta_{(t)_\Delta}=\sum_{k=1}^{n_t}\int_{((k-1)\Delta,k\Delta]\times \R_+ \times \R} y \mathds 1_{\theta \leq \lambda^\Delta_{k\Delta}}P(\dx s, \dx \theta, \dx y) 
     \end{cases}
     .
 \end{equation*}
 Furthermore, the discrete-time auxiliary process can be defined as 
 $$\xi^\Delta_t=\xi^\Delta_{(t)_\Delta}=\sum_{k=1}^{n_t}X_k$$
 and the discrete Hawkes process as 
 $$N^{\Delta}_t=N^\Delta_{(t)_\Delta}=\sum_{k=1}^{n_t}D_k$$
\end{Definition}
Note that, despite their names, the discrete-time processes defined above are continuous time embeddings of the times series $l^\Delta$ and $X^\Delta$. However, their values are allowed to change exclusively on the points of the discretisation grid. \\
For simulation purposes, it is possible to build the sequences $X^\Delta$ and $l^\Delta$ without simulating the underlying Poisson measure $P$. This is based on the following observation: knowing $X^\Delta_1, \cdots, X^\Delta_n$ (and thus $l^\Delta_{n+1}$ according to equation \eqref{eq:def_discrete}) the variable 
$$X^\Delta_{n+1}=\int_{(n\Delta,(n+1)\Delta]\times \R_+ \times \R} b(y) \mathds 1_{\theta \leq l^\Delta_{n+1}} P (\dx s , \dx \theta, \dx y )$$
is a compound Poisson variable, that is 
$$X^\Delta_{n+1}=\sum_{k=1}^{D^\Delta}b(Y_k),$$
where $D^\Delta | (X_1,\cdots,X_n) \sim \mathcal P (\Delta l^\Delta_{n+1})$ and $Y_1,\cdots,Y_{D^\Delta}$ are \textit{iid} variables of distribution $\nu$.\\
Finally, the risk process can be recursively constructed on the time grid 
$$R^\Delta_{(n+1)\Delta}=R^\Delta_{n\Delta}+ \sum_{k=1}^{D^\Delta} Y_k, $$
where $Y_1,\cdots,Y_{D^\Delta}$ are the same variables used in the computation of $X^\Delta_{n+1}$. \\
Intuitively, the variable $X^\Delta_{n+1}=\sum_{k=1}^{D^\Delta}b(Y_k)$ is the discrete equivalent of the increment $\dx \xi_t$ defined in \ref{def:hawkes_continuous}. This is why we see $R^\Delta$ as a good approximation of $R$, which is what we will prove in the rest of the paper.\\
We conclude this subsection by discussing the numerical cost of simulating the discrete time process. Generally, the computation of the recursion \eqref{eq:def_discrete} is of order $O(M^2)$. This cost can be reduced in two cases:
\begin{enumerate}
    \item If the kernel is an Erlang function, that is $h(t)=Q(t)e^{-\beta t}$ where $\beta >0$ and $Q$ is a polynomial of degree $q$, then  the intensity is a Markov chain (up to the introduction of auxiliary processes, \textit{cf.} \cite{mahmoud}) and the computation cost is of order $O(qM)$.
    \item If the kernel $h$ is of compact support, that is $h(t)=0, \forall t \geq 
 S$, then the cost is of order $O(rM)$, where $r=S/\Delta$. 
\end{enumerate}
Before we show that $R^\Delta$ converges to $R$ as the time step $\Delta$ goes to zero, we motivate the choice of the spaces in which the convergence takes place.

\begin{Remark}
    We seek to approximate $h$ by a piecewise constant function. We choose to approximate $h$ on $[t_k,t_{k+1}[$ by $h(t_k).$ We can also approximate $h$ on $[t_k,t_{k+1}[$ by $ \Delta^{-1} \int_{(t_{k},t_{k+1}]} h(s)  \mathrm d s.$ This latter choice is more comfortable from a mathematical point of view.
    Nevertheless, the exact computation of the coefficients $(\Delta^{-1} \int_{(t_{k},t_{k+1}]} h(s)  \mathrm d s)_k$ can be cumbersome for general kernel $h$. 
\end{Remark}

Before we show that $R^\Delta$ converges to $R$ as the time step $\Delta$ goes to zero, we motivate the choice of the spaces in which the convergence takes place.
\subsection{Discussion on functional spaces}
The quality of the strong approximation of continuous stochastic processes $(Z_t)_{t\in [0,T]}$ (and in particular, diffusion SDEs) is evaluated by controlling the average uniform error 
$$\E \left [\sup_{0\leq t \leq T}|Z_t-Z^\Delta_t| \right].$$
While supremum norm is adapted to processes that have continuous trajectories, it can also be extended to  jump-diffusion SDEs driven by a homogeneous Poisson process \cite{BRUTILIBERATI2007982}. This is the case because the arrival times of a Poisson process $(\tau_i)$ are explicitly known and can be simulated exactly (unlike for general Hawkes processes), therefore rendering the problem equivalent to approximating a diffusion on $(\tau_i, \tau_{i+1}]$.  \\
The supremum norm however, should not be used to evaluate the proximity of càdlàg trajectories. 


By construction, the Hawkes process and the discrete Hawkes process are càdlàg  picewise constant functions.
The  Hawkes process jumps time are contained in the set of the jumps time of the underlying Poisson random measure. On the other hand, the jumps of the discrete Hawkes process are contained in $\{t_k,~~k=1,...,N\}$ the set of the points of the subdivision. Thus, almost surely $\sup_{s\in [0,T]} | N_s - N^{\Delta}_s|{\geq} 1$ on a set with non null probability.
The processes $(N^{\Delta})_{\Delta \in (0,1)}$ will not converge to $N$ in the uniform norm on compact sets of ${\mathbb R}^+$ almost surely. 
This leads us to compute the distance between the Hawkes process and the discrete Hawkes process in ${\mathbb D}([0,T],{\mathbb R})$ equipped with the Skorokhod metric
\begin{equation}
\label{def:Skorokhod}
d_S(f,g)=\inf_{\mu \in \Lambda} \left \{ \sup_{0\leq t \leq T} |t-\mu(t)| \vee \sup _{0\leq t \leq T} |f(t)-g(\mu(t))| \right \},
\end{equation}
where $\Lambda$ is the set of strictly increasing continuous functions from $[0,T]$ to itself such that $\mu(0)=0$ and $\mu(T)=T$.
Note that, since this distance allows for some flexibility in the time at which the jump takes place (the role of the time change $\mu$), the problem that occurs with the uniform distance disappears. 

{By working in ${\mathbb D}([0,T],{\mathbb R})$ we do not pay attention to the properties  or the potential regularity of the paths of the processes.}
As increasing processes, the sample path of the Hawkes process and the discrete Hawkes process belong to set of function with finite bounded variation starting from 0 endowed with the distance
\begin{align*}
 d_{FV,T}(f,g)=    \sup_{(s_i)}\sum_{i}|f(s_i) - g(s_i)|
\end{align*}
where the suppremum run over all finite partitions $(s_i)$ of $[0,T].$ 
One should notice that $\sup_{s\in [0,T]}|f(s) -g(s)| \leq  d_{FV,T}(f,g)$ if $f(0)=g(0)=0.$ Thus, $N^{\Delta}$ will not converges to $N$ when $\Delta$ converges to $0$ for the  $ d_{FV,T}$ distance. 
In \cite{junca},  {the authors} 
prove that the space of bounded variation is continuously embedding in  $\cap_{\eta<1}W^{\eta,1}$. {The fractional Sobolev spaces are interpolated spaces between $L^1([0,T])$ and the classical Sobolev space $W^{1,1}.$  Moreover, since, $R$ and $R^{\Delta}$ are linear combination of indicator functions ${\mathbf 1}_{[a,+\infty[},$
they belong to some suitable  Riemann-Liouville Fractional Sobolev spaces.
Thus, we compute the distance between the Hawkes process and the discrete Hawkes process in $W^{\eta,1}$ for all $0<\eta <1$ and in  Riemann-Liouville Fractional Sobolev spaces. See \cite{BLNT} for some definitions.}

\section{Main results}
\label{sec:main_result}
In the following section, we show the strong convergence of the discrete-time Hawkes risk process to the continuous-time counterpart and give convergence rates both in the time-step $\Delta$ and the time horizon $T$. 
In this section $K$ is a constant which may depend on $L,$ $\nu,$  $h$ and  $b$ but is independent of $T$ and $\Delta.$ We emphasize that while $K$ is finite if the stability assumption \ref{ass:stability} holds, it can diverge to infinity as $\rho_h$ approaches 1. \\ 
We also emphasize that our continuous and discrete time processes are thinned from the same Poisson measure $P$. As an illustration, the following figure shows both processes and their underlying common randomness for two values of the time step $\Delta$:

\begin{figure*}[h!]
    \centering
    \begin{subfigure}[t]{0.5\textwidth}
        \centering
        \includegraphics[height=50mm]{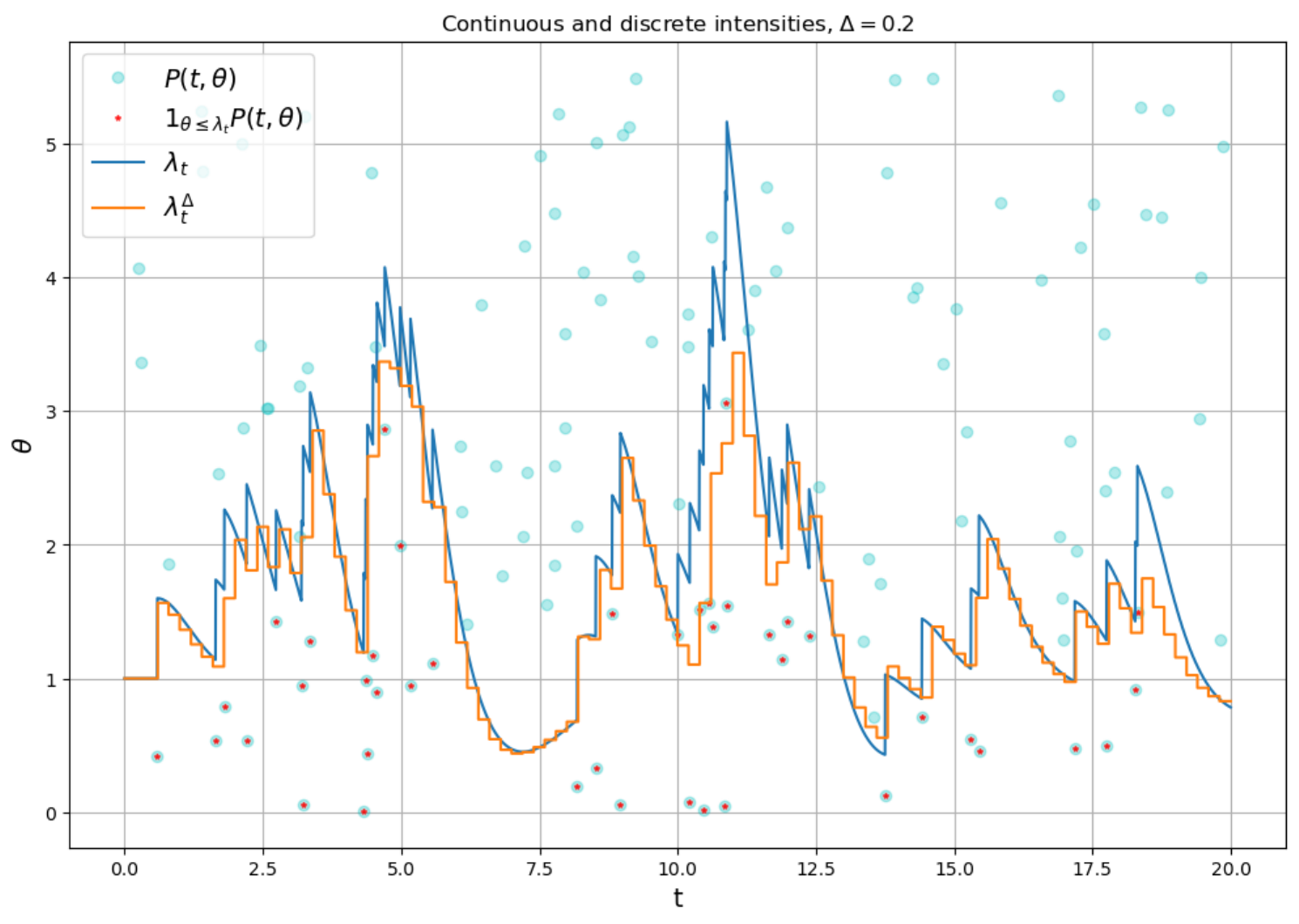}
        \caption{When the discretisation step $\Delta$ is relatively large, the discrete intensity is more susceptible to miss points that are accepted by the continuous time trajectory.}
    \end{subfigure}%
    ~ 
    \begin{subfigure}[t]{0.5\textwidth}
        \centering
        \includegraphics[height=50mm]{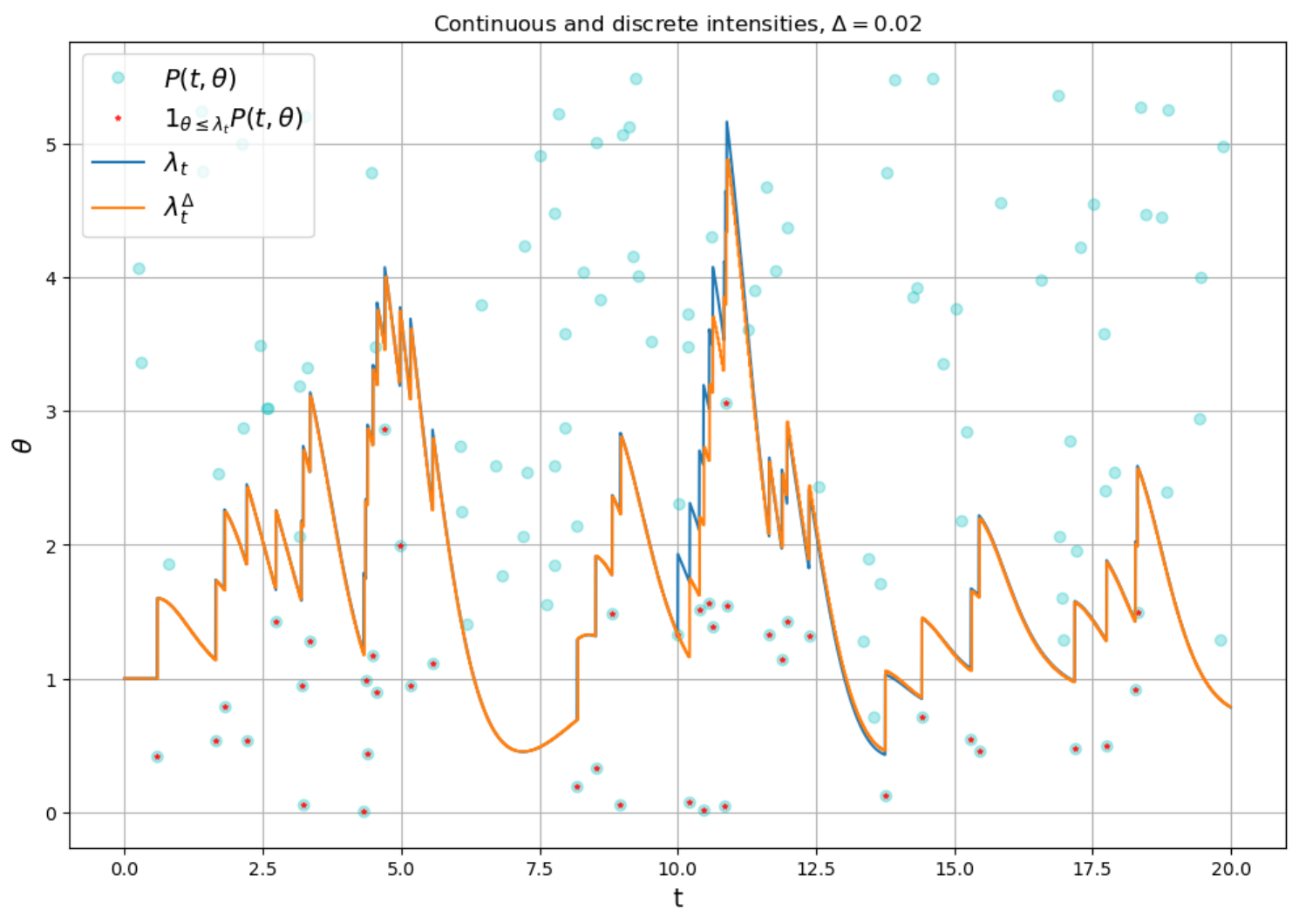}
        \caption{As the discretisation step $\Delta$ becomes smaller, the two trajectories become closer and tend to accept the exact same points.}
    \end{subfigure}
    \caption{A realisation of the discrete and continuous time intensities as thinning from the same underlying Poisson measure $P$. The jump rate is $\psi(x)=(x)_+$ and the kernel function is $h(t)=\frac{0.6\cdot \cos(t)}{1+t^2}$.}
\end{figure*}

\subsection{Preliminary results}
First, we give a regularity result on the limit process that is, the continuous-time process. This is motivated by the fact that if $\lambda$ is too irregular, a piecewise approximation of it would accept (resp. miss) many points that are not accepted (resp. rejected) in the continuous process.

\begin{Lemma}
\label{lmm:delta}
    Let $h, \psi $ and $b$ three functions satisfying Assumption \ref{ass:stability}. There exists a constant $K$ such that for any $v\in [0,T]$  we have 
    $$\E |\lambda_v-\lambda_{(v)_\Delta}|\leq K \left(  \int_0^\Delta|h(y)|\dx y + \sup_{\epsilon \in[0, \Delta]}\int_0^{T-\Delta}\left|h(y+\epsilon)-h(y)\right|\dx y\right)$$

\end{Lemma}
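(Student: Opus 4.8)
The plan is to exploit the Lipschitz property of $\psi$ to reduce the problem to a first-moment estimate on the difference of the two stochastic integrals appearing inside $\psi$, and then to linearise these integrals via the compensation formula for the Poisson measure $P$. Write $u=(v)_\Delta$ and $\epsilon=v-u\in[0,\Delta)$; note that for $v<T$ one has $u\leq T-\Delta$, while for $v=T$ the statement is trivial. Since $\psi$ is $L$-Lipschitz,
$$|\lambda_v-\lambda_u|\leq L\left|\int_{[0,v)}h(v-s)\mathds 1_{\theta\leq\lambda_s}b(y)P(\dx s,\dx\theta,\dx y)-\int_{[0,u)}h(u-s)\mathds 1_{\theta\leq\lambda_s}b(y)P(\dx s,\dx\theta,\dx y)\right|.$$

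First I would split the domain of integration into $[0,u)$ and $[u,v)$ and regroup, obtaining one term in which only the kernel is shifted, $\int_{[0,u)}\bigl(h(\epsilon+(u-s))-h(u-s)\bigr)\mathds 1_{\theta\leq\lambda_s}b(y)\,P$, and one boundary term $\int_{[u,v)}h(v-s)\mathds 1_{\theta\leq\lambda_s}b(y)\,P$ supported on the short interval of length $\epsilon$. Passing the absolute value inside each integral (legitimate since $P$ is a positive measure) and taking expectations, the predictability of $\mathds 1_{\theta\leq\lambda_s}$ lets me replace $P(\dx s,\dx\theta,\dx y)$ by its compensator $\dx s\,\dx\theta\,\nu(\dx y)$; integrating out $\theta$ produces a factor $\lambda_s$ and integrating out $y$ produces a factor $\int_\R b\,\dx\nu=\E[b(Y)]$. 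This yields bounds of the form $L\,\E[b(Y)]\int\cdots\E[\lambda_s]\,\dx s$.

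The remaining ingredient, which I regard as the crux, is a uniform-in-time bound $\sup_{s\in[0,T]}\E[\lambda_s]\leq C$ with $C$ independent of $T$. I would obtain it from the renewal inequality $\E[\lambda_t]\leq\psi(0)+L\,\E[b(Y)]\int_0^t|h(t-s)|\,\E[\lambda_s]\,\dx s$, which follows from $\psi(x)\leq\psi(0)+L|x|$ and another application of the compensation formula to the auxiliary process $\xi$; iterating this inequality and summing the resulting geometric series, the stability Assumption \ref{ass:stability} ($\rho_h<1$) then gives $C=\psi(0)/(1-\rho_h)$ by a Gr\"onwall-type argument on $\sup_{s\leq t}\E[\lambda_s]$. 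It is precisely this step that keeps the constant $K$ finite while forcing it to blow up as $\rho_h\to1$, consistent with the statement.

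Finally I would perform the two changes of variable $y=u-s$ and $y=v-s$ to identify the integrals with the quantities in the statement. On $[0,u)$ the substitution $y=u-s$ turns $\int_0^u|h(\epsilon+(u-s))-h(u-s)|\,\dx s$ into $\int_0^u|h(y+\epsilon)-h(y)|\,\dx y\leq\sup_{\epsilon\in[0,\Delta]}\int_0^{T-\Delta}|h(y+\epsilon)-h(y)|\,\dx y$, where $u\leq T-\Delta$ guarantees that the shifted argument stays in $[0,T]$; on $[u,v)$ the substitution $y=v-s$ turns $\int_u^v|h(v-s)|\,\dx s$ into $\int_0^\epsilon|h(y)|\,\dx y\leq\int_0^\Delta|h(y)|\,\dx y$. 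Collecting the two contributions with $K=L\,C\,\E[b(Y)]$ gives the claimed inequality. The only genuinely delicate points are the justification of the compensation formula with the predictable indicator $\mathds 1_{\theta\leq\lambda_s}$ and the uniform moment bound above; the remainder is bookkeeping.
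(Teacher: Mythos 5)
Your proposal is correct and follows essentially the same route as the paper: the Lipschitz reduction, the identical decomposition into a boundary term on $[(v)_\Delta,v)$ and a shifted-kernel term on $[0,(v)_\Delta)$, the compensation formula giving the factor $\E[b(Y)]\,\E[\lambda_s]$, the uniform-in-time bound $\sup_s\E[\lambda_s]\leq\psi(0)/(1-\rho_h)$ (which the paper isolates as Lemma \ref{lmm:intensite_bornee}, proved by the same renewal-iteration argument you sketch), and the final changes of variables. The only cosmetic difference is that you inline the moment bound rather than cite it as a separate lemma.
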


\begin{proof}
For $v=T,$ $\lambda_v-\lambda_{(v)_\Delta}=0$ thus we choose $v\in [0,T[.$
Since $\psi$ is $L-$Lipschitz and using a linear change of variables we have that 
\begin{align*}
    \E |\lambda_v-\lambda_{(v)_\Delta}|&\leq L \E\left | \int_0^v h(v-s)\dx \xi_s - \int_0^{(v)_\Delta}h((v)_\Delta-s)\dx \xi_s\right|\\
    &\leq L\left( \E\left |\int_{(v)_\Delta}^v h(v-s) \dx \xi_s \right|+ \E\left| \int_0^{(v)_\Delta}h(v-s)-h((v)_\Delta-s) \dx \xi_s\right|\right)\\
      &\leq L\left( \int_{(v)_\Delta}^v \left | h(v-s)\right| \E [b(Y)]\E \lambda_s\dx s +  \int_0^{(v)_\Delta}\left|h(v-s)-h((v)_\Delta-s)\right| \E [b(Y)]\E\lambda_s\dx s\right)\\
\end{align*}
      because $\dx\E [ \xi_s]= \E b(Y) \E \lambda_s \dx s$. Using the bound on the expected value proved in Lemma \ref{lmm:intensite_bornee}, we have that 
     $$\E |\lambda_v-\lambda_{(v)_\Delta}| \leq  \frac{\E [b(Y)]L\psi(0)}{1-\E [b(Y)]L\|h\|_1}\left(  \int_0^\Delta|h(y)|\dx y + \int_0^{(v)_\Delta}\left|h(y+v-(v)_\Delta)-h(y)\right|\dx y\right),$$
and the result follows immediately.
\end{proof}
This lemma shows that, the kernel's regularity in the sense of the shift operator in the $L_1$ norm, yields the intensity's regularity.\\

Since the risk processes are the result of accepting the points of the underlying Poisson measure under the intensity's curve, the approximation of the intensity is an important step in proving the convergence of $R^\Delta$ to $R$. We now give the first bound on the distance between intensities on the points of the discretisation grid.

The following constants will be useful for the sequel.
\begin{Definition}
    \label{rem-dec-const*}
{We recall that $\rho_h=L\|h\|_1\E b (Y)$ and $\rho_{h,\Delta}=L\sum_{k=1}^M |h_k|\Delta \E b(Y)$. We set} 
    \begin{align*}
        C_S(h,\Delta) &= \frac{1}{(1-\rho_h)}+\frac{1}{(1-\rho_{h,\Delta})},\\
        C_R(h,\Delta)&= \int_0^\Delta|h(y)|\dx y+ \sup_{\epsilon \in [0,\Delta]}\int_0^{T-\Delta} |h(y+\epsilon)-h(y)|\dx y+\int_0^{T-\Delta} \left | h(y)-h\left( (y)_\Delta+\Delta\right) \right| \dx y.
    \end{align*}
    The constant $C_S(h,\Delta)$ is related to the stability assumptions \ref{ass:stability} and \ref{ass:stability_discrete} while the constant $C_R(h,\Delta)$ depends on the regularity of the kernel.
\end{Definition}



\begin{Lemma}\label{lmm:comp-intens}
Let $\lambda$ (resp. $\lambda^\Delta$) be the intensities defined by thinning from the Poisson measure $P$ in Definition \ref{def:hawkes_continuous} (resp. \ref{def:hawkes_discrete}).  
Assume that Assumptions \ref{ass:stability} and \ref{ass:stability_discrete} hold  
     .
There exists a constant $K$ such that 
for all  $u\in [0,T]$ we have that 
 \begin{equation}
 \label{ineq:lambda_discret-R}
     \E \left| \lambda_{(u)_\Delta}- \lambda^\Delta_u \right| \leq K {C_R(h,\Delta)C_S(h,\Delta)}.
 \end{equation}
\end{Lemma}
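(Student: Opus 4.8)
The plan is to reduce the comparison of the two intensities to a comparison of the arguments of $\psi$, and then to close a discrete Grönwall recursion on the grid. Writing $n=n_u$ so that $(u)_\Delta=n\Delta$, the $L$-Lipschitz property of $\psi$ gives
\[
\E\left|\lambda_{(u)_\Delta}-\lambda^\Delta_u\right|\le L\,\E\left|\int_{(0,n\Delta)\times\R_+\times\R}h(n\Delta-s)b(y)\mathds 1_{\theta\le\lambda_s}P(\dx s,\dx\theta,\dx y)-\sum_{k=1}^{n-1}h_{n-k}X^\Delta_k\right|.
\]
First I would rewrite the discrete sum as a single stochastic integral against the same measure $P$ over $(0,(n-1)\Delta]$, carrying the piecewise-constant kernel $s\mapsto h(n\Delta-(s)_\Delta-\Delta)$ and the piecewise-constant predictable threshold $\bar\lambda^\Delta_s:=l^\Delta_{n_s+1}=\lambda^\Delta_{(s)_\Delta+\Delta}$, which is exactly the level used to thin $P$ on the bin containing $s$.

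I would then split the difference into three contributions: a boundary integral over $((n-1)\Delta,n\Delta)$ coming from the mismatch of integration domains; a kernel-discretisation integral carrying the difference $h(n\Delta-s)-h(n\Delta-(s)_\Delta-\Delta)$ at fixed threshold $\lambda_s$; and a thinning integral carrying the indicator difference $\mathds 1_{\theta\le\lambda_s}-\mathds 1_{\theta\le\bar\lambda^\Delta_s}$ at fixed discretised kernel. Taking expectations, the compensation formula for $P$ — legitimate because $\lambda$ and $\bar\lambda^\Delta$ are predictable — turns each integral against $P$ into a $\dx s\,\nu(\dx y)$ integral: the $b$-factors produce $\E[b(Y)]$, the plain indicators produce $\E\lambda_s\,\dx s$, and the indicator difference produces $\E|\lambda_s-\bar\lambda^\Delta_s|\,\dx s$ after integrating the $\theta$-variable over the symmetric difference $\{\min(\lambda_s,\bar\lambda^\Delta_s)<\theta\le\max(\lambda_s,\bar\lambda^\Delta_s)\}$.

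For the first two contributions I would use the uniform bound $\E\lambda_s\le \psi(0)/(1-\rho_h)$ from Lemma~\ref{lmm:intensite_bornee}; after the change of variables $y=n\Delta-s$ the boundary term becomes $\int_0^\Delta|h|$ and the kernel term becomes the grid modulus of continuity recorded in the third summand of $C_R(h,\Delta)$, each multiplied by $1/(1-\rho_h)$. For the thinning contribution I would insert the grid value of the continuous intensity,
\[
\E|\lambda_s-\bar\lambda^\Delta_s|\le\E|\lambda_s-\lambda_{(s)_\Delta+\Delta}|+\E|\lambda_{(s)_\Delta+\Delta}-l^\Delta_{n_s+1}|,
\]
bounding the first term by Lemma~\ref{lmm:delta} (which supplies the first two summands of $C_R$) and recognising the second as the very quantity $a_k:=\E|\lambda_{k\Delta}-l^\Delta_k|$ we are estimating, now evaluated at the bin index $k=n_s+1$.

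This produces a discrete recursion of the form $a_n\le E+\sum_{k=1}^{n-1}w_{n-k}a_k$, where $E\le K\,C_R(h,\Delta)/(1-\rho_h)$ is uniform in $n$ and the weights satisfy $\sum_j w_j=L\,\E[b(Y)]\sum_{k=1}^M|h_k|\Delta=\rho_{h,\Delta}<1$ by Assumption~\ref{ass:stability_discrete}. Taking the supremum over $n$ and solving the scalar inequality $\sup_n a_n\le E+\rho_{h,\Delta}\sup_n a_n$ yields $\sup_n a_n\le E/(1-\rho_{h,\Delta})$, hence a bound of order $K\,C_R(h,\Delta)/[(1-\rho_h)(1-\rho_{h,\Delta})]$, which is dominated by $K\,C_R(h,\Delta)\,C_S(h,\Delta)$ once the $T$-bounded factor $1/(1-\rho_h)$ is absorbed into $K$. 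The main obstacle is closing this Grönwall loop: the thinning term genuinely reproduces the target differences at shifted grid indices, so the argument terminates only because the discrete stability constant $\rho_{h,\Delta}$ is strictly below $1$. A secondary technical point is the careful justification of the compensation step for the indicator-difference integrand, which is not of the product form $g(s,y)\,\mathds 1_{\theta\le\lambda_s}$ and must instead be controlled through the predictable envelope $\{\min<\theta\le\max\}$.
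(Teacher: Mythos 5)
Your proof is correct and is essentially the paper's own argument: Lipschitz reduction, the same three-way splitting (boundary bin, kernel-discretisation error, indicator/thinning difference), the compensation formula for the predictable integrands, insertion of grid values of $\lambda$ via Lemma \ref{lmm:delta}, and a discrete renewal (Gr\"onwall) recursion for $a_n=\E|\lambda_{n\Delta}-\lambda^\Delta_{n\Delta}|$ closed because the weights sum to a stability constant strictly below $1$. The only differences are bookkeeping: you pair the indicator difference with the \emph{discretised} kernel, so your recursion weights $L\,\E[b(Y)]\,|h_j|\Delta$ sum to $\rho_{h,\Delta}$ and the loop closes via Assumption \ref{ass:stability_discrete}, whereas the paper pairs it with the \emph{continuous} kernel (weights $L\,\E[b(Y)]\int_{\mathrm{bin}}|h|$, closing via $\rho_h<1$), and you insert the upper grid point $(s)_\Delta+\Delta$ (consistent with the thinning threshold $l^\Delta_{n_s+1}$) where the paper inserts the lower one --- both variants are legitimate since both stability assumptions are hypotheses of the lemma, and both lead to the claimed bound $K\,C_R(h,\Delta)\,C_S(h,\Delta)$ after absorbing the $T$- and $\Delta$-independent factor $1/(1-\rho_h)$ into $K$.
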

The proof of this Lemma is given in Section \ref{sec:appendix}.\\
Before proving the convergence of the discrete-time Hawkes risk, we point out to the fact that if $h$ satisfies Assumption \ref{ass:stability} and is sufficiently regular, that is satisfying the following assumption

  \begin{Assumption}\label{ass:limit}

    \begin{align*} 
    \lim_{\Delta \to 0} \int_0^{T-\Delta} |h(t)-h((t)_\Delta+\Delta)| \dx t =0,
    \end{align*}
    
\end{Assumption}
then Assumption \ref{ass:stability_discrete} becomes superfluous. 
\begin{Lemma}
    \label{lmm:superfluous}
    If Assumptions \ref{ass:stability} and \ref{ass:limit} are satisfied by a kernel $h$, then for any $\epsilon >0$ small enough, there exists a threshold $\Delta_1 >0$ such that 
    $$\frac{1}{1-\rho_{h,\Delta}} \leq \frac{1}{1-\rho_h} +\epsilon$$
    for any $\Delta \leq \Delta_1$. In particular, there exists $\Delta_0 >0$ 
   Assumption \ref{ass:stability_discrete} is verified by $h$ for any $\Delta\leq \Delta_0$.
\end{Lemma}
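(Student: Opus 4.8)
The plan is to reduce both assertions to the single analytic fact that $\rho_{h,\Delta}\to\rho_h$ as $\Delta\to0$; once this convergence is established, everything follows from the continuity of the map $x\mapsto 1/(1-x)$ at the point $\rho_h<1$. Writing $\rho_{h,\Delta}=L\,\E[b(Y)]\sum_{k=1}^M|h_k|\Delta$ and $\rho_h=L\,\E[b(Y)]\|h\|_1$, the whole problem is to control the quantity $\bigl|\sum_{k=1}^M|h_k|\Delta-\|h\|_1\bigr|$, which I would bound using Assumption \ref{ass:limit}.

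First I would rewrite the Riemann sum as the integral of a piecewise constant function. For $t\in[(k-1)\Delta,k\Delta)$ one has $(t)_\Delta+\Delta=k\Delta$, so $|h((t)_\Delta+\Delta)|=|h_k|$ on that interval; integrating over $[0,T-\Delta)=[0,(M-1)\Delta)$ gives
$$\sum_{k=1}^{M-1}|h_k|\Delta=\int_0^{T-\Delta}|h((t)_\Delta+\Delta)|\,\dx t,$$
whence $\sum_{k=1}^M|h_k|\Delta=\int_0^{T-\Delta}|h((t)_\Delta+\Delta)|\,\dx t+|h(T)|\Delta$. Comparing with the splitting $\|h\|_1=\int_0^{T-\Delta}|h(t)|\,\dx t+\int_{T-\Delta}^T|h(t)|\,\dx t$ and applying the reverse triangle inequality $\bigl||a|-|b|\bigr|\leq|a-b|$, I obtain
$$\left|\sum_{k=1}^M|h_k|\Delta-\|h\|_1\right|\leq\int_0^{T-\Delta}|h((t)_\Delta+\Delta)-h(t)|\,\dx t+|h(T)|\Delta+\int_{T-\Delta}^T|h(t)|\,\dx t.$$

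Then I would show each of the three terms vanishes as $\Delta\to0$: the first is exactly the quantity assumed to tend to zero in Assumption \ref{ass:limit}; the second is $O(\Delta)$ since the pointwise value $h(T)$ entering the definition of $\rho_{h,\Delta}$ is finite; and the third tends to $0$ by absolute continuity of the Lebesgue integral of the $L^1$ function $|h|$ over the shrinking interval $[T-\Delta,T]$. This proves $\rho_{h,\Delta}\to\rho_h$. To finish, I would fix $\Delta_0$ small enough that $|\rho_{h,\Delta}-\rho_h|<(1-\rho_h)/2$ for all $\Delta\leq\Delta_0$; then $\rho_{h,\Delta}<(1+\rho_h)/2<1$, which is precisely Assumption \ref{ass:stability_discrete}, so on this range $1/(1-\rho_{h,\Delta})$ is well-defined and bounded, and by continuity it can be brought within any prescribed $\epsilon$ of $1/(1-\rho_h)$ by further shrinking $\Delta_0$ to some $\Delta_1$, yielding the stated bound.

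The main obstacle is essentially bookkeeping at the right endpoint: the clean identity between the Riemann sum and the integral of the shifted kernel holds only up to the boundary contribution at $k=M$, and Assumption \ref{ass:limit} is stated over $[0,T-\Delta]$ rather than $[0,T]$. I would therefore isolate the $k=M$ term and the tail integral $\int_{T-\Delta}^T|h|$ separately and argue that both are genuinely negligible, rather than attempting to absorb them into the shift term; this is the only point where some care is needed.
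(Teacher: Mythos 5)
Your proof is correct and follows essentially the same route as the paper: both control the gap between the Riemann sum $\sum_k |h_k|\Delta$ and $\|h\|_1$ by the shift quantity $\int_0^{T-\Delta}|h(s)-h((s)_\Delta+\Delta)|\,\dx s$ of Assumption \ref{ass:limit}, and then conclude from $\rho_h<1$. The only differences are minor: the paper settles for the one-sided bound $\rho_{h,\Delta}\leq \rho_h + L\,\E[b(Y)]\cdot o(1)$ (which suffices for both conclusions), whereas you establish two-sided convergence $\rho_{h,\Delta}\to\rho_h$ and explicitly handle the endpoint contributions $|h(T)|\Delta$ and $\int_{T-\Delta}^T|h|$ that the paper silently drops by summing only to $M-1$.
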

\begin{proof}

    For a given kernel $h \in L_1$ we have that 
    \begin{align}
    \Delta\sum_{i=1}^{M-1} |h(i\Delta)| &\leq \sum_{i=1}^{M-1} \Delta\left|h(i\Delta) -\Delta^{-1} \int_{(i-1)\Delta }^{i\Delta} h(s) \dx s \right| + \sum_{i=1}^{M-1} \left| \int_{(i-1)\Delta }^{i\Delta} h(s) \dx s \right| \nonumber\\
    &\leq \sum_{i=1}^{M-1} \int_{(i-1)\Delta}^{i\Delta}| h( i\Delta) -h(s)| \dx s + \int_0^T|h(s)| \dx s \nonumber \\
    &= \int_0^{T-\Delta} | h(s) - h( (s)_{\Delta} +\Delta)| \dx s + \|h\|_1 \label {ineq: hyp_superflue}.
    \end{align}
    Since the first time in the right hand side of the last inequality tends to zero, we conclude that Assumption \ref{ass:stability_discrete} is satisfied below a certain threshold $\Delta_1 >0$.
\end{proof}
  
With these results stated, we are now ready to give the first approximation result of the risk process. We will skip the dependence in $C_S(h,\Delta)$ in forthcomming convergence results.

\subsection{Convergence in the fractional Sobolev space {and Riemann-Liouville fractional space}}
The goal of this section is to give a bound on the distance between the risk process on the compact interval $[0,T]$ and its discrete counterpart in the fractional Sobolev space
{and Riemann-Liouville fractional space (see \cite{SKM} or \cite{BLNT} for some details)}. For a measurable function $u:[0,T]\to \R$ and $\eta\in(0,1)$, we introduce the norm 
$$\|u\|_{W^{\eta,q}_T}^q:=\int_0^T |u(t)|^q \dx t + \int_0^T \int_0^T \frac{|u(t)-u(s)|^q}{|t-s|^{1+p\eta }} \dx t \dx s.$$
Naturally, the Sobolev fractional space is defined as $W^{\eta,q}_T:= \left\{ u \in L_q ([0,T]): \|u\|_{W^{\eta,q}_T} <+\infty\right \}.$
Here we restrict ourselves to $q=1$ and omit it from the notation.
We also introduce the fractional integral. For $u\in L^1([0,T])$ and $0<\eta<1$
\begin{align*}
    I_{0^+}^{\eta} (u)(t)= \frac{1}{\Gamma(\eta)}\int_0^t (t-s)^{\eta- 1} u(s) ds,~~t\in [0,T]
\end{align*}
Let us denote 
\begin{align*}
    I_{0^+}^{\eta}(L^1([0,T]))=\{u \in L^1([0,T]) \left| \exists v \in L^1([0,T]),~~u=I_{0^+}^{\eta}(v) \right.\},
\end{align*}
{
With these notations in hand, the Riemann-Liouville Fractional Sobolev space is
\begin{align*}
    W_{RL,0^+}^{\eta,1}(T)= \{u \in L^1([0,T])~~s.t. I_{0^+}^{\eta}(u) ~~\mbox{is absolutely continuous}\}.
\end{align*}}

Bergougnoux \textit{et al.} have proved the following theorem (see Theorem 3.2 of \cite{BLNT})
\begin{Theorem}
    For $  0<\eta' <\eta<1$
    \begin{align*}
        W^{1,\eta}(T)\cap I_{0^+}^{\eta'}(L^1([0,T])) \subset W_{RL,0^+}^{\eta',1}(T)
        \end{align*}
        with continuous injection.
        More precisely, there exists a constant $C$ such that  for $u = I_{0^+}^{\eta'}(v)\in W^{\eta,1},$
        \begin{align*}
            \|v\|_1\leq C \|u\|_{W^{\eta,q}_T}.
        \end{align*}
\end{Theorem}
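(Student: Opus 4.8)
The plan is to identify the function $v$ with the Riemann--Liouville fractional derivative of $u$ of order $\eta'$, and then to bound its $L^1$ norm by the fractional Sobolev norm of $u$. Write $D_{0^+}^{\eta'}$ for this derivative, i.e. $D_{0^+}^{\eta'}w=\frac{\dx}{\dx t}I_{0^+}^{1-\eta'}w$. Since $u=I_{0^+}^{\eta'}(v)$ with $v\in L^1$, the semigroup property of the fractional integral gives $I_{0^+}^{1-\eta'}u=I_{0^+}^{1-\eta'}I_{0^+}^{\eta'}v=I_{0^+}^{1}v=\int_0^\cdot v(s)\,\dx s$, which is absolutely continuous; hence $u$ belongs to the Riemann--Liouville space $W_{RL,0^+}^{\eta',1}(T)$ and $D_{0^+}^{\eta'}u=v$ almost everywhere. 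This reduces the theorem to the quantitative estimate $\|D_{0^+}^{\eta'}u\|_1\leq C\|u\|_{W^{\eta,1}_T}$, which is where all the analytic work lies.

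The key tool I would use is the Marchaud (pointwise) representation of the fractional derivative on a bounded interval,
\[
D_{0^+}^{\eta'}u(t)=\frac{1}{\Gamma(1-\eta')}\left(\frac{u(t)}{t^{\eta'}}+\eta'\int_0^t\frac{u(t)-u(s)}{(t-s)^{1+\eta'}}\,\dx s\right),
\]
valid for $u\in I_{0^+}^{\eta'}(L^1)$. Taking the $L^1$ norm in $t$ and using the triangle inequality splits $\|v\|_1$ into a boundary term $\int_0^T\frac{|u(t)|}{t^{\eta'}}\,\dx t$ and a Gagliardo-type double integral $\int_0^T\int_0^t\frac{|u(t)-u(s)|}{(t-s)^{1+\eta'}}\,\dx s\,\dx t$, each of which I will estimate separately by $\|u\|_{W^{\eta,1}_T}$.

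For the double integral I would use the elementary comparison of Gagliardo seminorms on a bounded domain: since $\eta'<\eta$ and $|t-s|\leq T$, one has $|t-s|^{-(1+\eta')}\leq T^{\eta-\eta'}|t-s|^{-(1+\eta)}$, so the term is at most $T^{\eta-\eta'}$ times the seminorm part of $\|u\|_{W^{\eta,1}_T}$. For the boundary term I would invoke the fractional Sobolev embedding $W^{\eta,1}([0,T])\hookrightarrow L^{1/(1-\eta)}([0,T])$ (valid since $\eta<1$ in dimension one) together with H\"older's inequality with conjugate exponents $\frac{1}{1-\eta}$ and $\frac{1}{\eta}$, which gives $\int_0^T|u(t)|t^{-\eta'}\,\dx t\leq \|u\|_{L^{1/(1-\eta)}}\big(\int_0^T t^{-\eta'/\eta}\,\dx t\big)^{\eta}$; the last integral is finite precisely because $\eta'<\eta$. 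Combining the two bounds produces the constant $C$ (depending on $T,\eta,\eta'$) and the claimed continuous injection.

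The main obstacle I anticipate is the rigorous justification of the Marchaud representation for a function $u$ that is merely in $W^{\eta,1}\cap I_{0^+}^{\eta'}(L^1)$: one must justify differentiating $I_{0^+}^{1-\eta'}u$ under the integral sign and extract the cancellation that turns it into the singular kernel $(t-s)^{-(1+\eta')}$. I would handle this by first establishing the identity for smooth $v$, where Fubini and direct differentiation apply, obtaining the estimate with a constant independent of $v$, and then passing to the limit by density of smooth functions in $L^1$, using that both sides are continuous in the relevant norms. A secondary point to verify is that the constant coming from the Sobolev embedding is genuinely independent of $u$; this is standard but should be stated explicitly so that the bound $\|v\|_1\leq C\|u\|_{W^{\eta,1}_T}$ holds uniformly.
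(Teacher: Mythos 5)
The paper does not actually prove this statement: it is quoted as Theorem 3.2 of the cited reference \cite{BLNT}, so there is no internal proof to compare against, and your attempt must be judged on its own merits. Your core analytic plan is the natural one and most of it is correct: the reduction to the bound $\|D_{0^+}^{\eta'}u\|_1\leq C\|u\|_{W^{\eta,1}_T}$, the Marchaud splitting into the boundary term $\int_0^T|u(t)|t^{-\eta'}\,\dx t$ and the Gagliardo-type double integral, the kernel comparison $(t-s)^{-(1+\eta')}\leq T^{\eta-\eta'}(t-s)^{-(1+\eta)}$, and the treatment of the boundary term via the embedding $W^{\eta,1}([0,T])\hookrightarrow L^{1/(1-\eta)}([0,T])$ plus H\"older (finite precisely because $\eta'/\eta<1$) are all sound. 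Likewise the semigroup identity $I_{0^+}^{1-\eta'}I_{0^+}^{\eta'}v=I_{0^+}^{1}v$ correctly gives membership in the Riemann--Liouville space, understood in its standard form ($I_{0^+}^{1-\eta'}(u)$ absolutely continuous; the paper's displayed definition with $I_{0^+}^{\eta}(u)$ appears to be a typo, and your reading is the sensible one).

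The genuine gap is in the step you yourself flagged: justifying the Marchaud identity for general $v\in L^1$ by density. Passing to the limit from smooth $v_n\to v$ in $L^1$ ``using that both sides are continuous in the relevant norms'' requires continuity of $v\mapsto\|I_{0^+}^{\eta'}v\|_{W^{\eta,1}_T}$ under $L^1$ convergence, i.e.\ boundedness of $I_{0^+}^{\eta'}:L^1\to W^{\eta,1}$. This is false when $\eta>\eta'$: a fractional integral of order $\eta'$ of an $L^1$ function has Gagliardo regularity only up to order $\eta'$, which is exactly why the theorem must assume $u\in W^{\eta,1}\cap I_{0^+}^{\eta'}(L^1)$ rather than deduce one space from the other. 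Concretely, applying the smooth-case estimate gives $\|v_n\|_1\leq C\|u_n\|_{W^{\eta,1}_T}$ with $u_n=I_{0^+}^{\eta'}v_n$, and you would then need $\limsup_n\|u_n\|_{W^{\eta,1}_T}\leq\|u\|_{W^{\eta,1}_T}$; nothing provides this (Fatou yields only the reverse inequality), and $u_n$ need not even lie in $W^{\eta,1}$. The repair avoids density altogether: since $u\in W^{\eta,1}$, your own kernel comparison plus Fubini shows $\int_0^t|u(t)-u(s)|(t-s)^{-1-\eta'}\,\dx s<\infty$ for a.e.\ $t$, so the Marchaud expression defines a function $w\in L^1$ with $\|w\|_1\leq C\|u\|_{W^{\eta,1}_T}$. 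It remains to identify $w=v$ a.e., which one gets either from the classical theorem that the truncated Marchaud derivatives of $u=I_{0^+}^{\eta'}v$ converge to $v$ in $L^1$ (Samko--Kilbas--Marichev, Ch.~13) --- absolute convergence a.e.\ makes the truncations converge pointwise to $w$, so $w=v$ a.e.\ --- or by verifying $I_{0^+}^{\eta'}w=u$ via Fubini and invoking injectivity of $I_{0^+}^{\eta'}$ on $L^1$. With that replacement your argument is complete.
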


\begin{Proposition}
Let $T>0.$ Let $( h, \psi, \nu,b)$ fulfilling Assumption \ref{ass:stability}.
    Let $R$ (resp. $R^\Delta$) be the continuous time (discrete time) risk process. 
     We have that 
    $$(R_t)_{t\in [0,T]} \in W^\eta_T \cap W_{RL,0^+}^{\eta,1}([0,T])\text { and }(R^{\Delta}_t)_{t\in [0,T]} \in W^\eta_T\cap W_{RL,0^+}^{\eta,1}([0,T]) $$
    almost surely. 
\end{Proposition}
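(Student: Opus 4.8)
The plan is to exploit the fact that, almost surely, both $R$ and $R^\Delta$ are \emph{finite} linear combinations of shifted Heaviside indicators $\mathds{1}_{[a,T]}$ with $a\in[0,T]$, together with the fact that $W^\eta_T$ and $W_{RL,0^+}^{\eta,1}([0,T])$ are vector spaces, so the whole statement reduces to the single building block $f_a:=\mathds{1}_{[a,T]}$. First I would establish the almost sure finiteness of the number of jumps. For $R$ this follows because Assumption~\ref{ass:stability} forces $\E[N_T]<\infty$ (via the bound on the expected intensity of Lemma~\ref{lmm:intensite_bornee}), hence $N_T<\infty$ almost surely and $R_t=\sum_{i=1}^{N_T}Y_i\,\mathds{1}_{\{t\ge \tau_i\}}$ is a finite sum; for $R^\Delta$ it is immediate, since $R^\Delta$ can only jump at the $M$ grid points $t_k$ and each increment is an almost surely finite compound Poisson sum, so $R^\Delta_t=\sum_{k=1}^{M}\big(R^\Delta_{t_k}-R^\Delta_{t_{k-1}}\big)\,\mathds{1}_{\{t\ge t_k\}}$. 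By the triangle inequality for $\|\cdot\|_{W^\eta_T}$ and the linearity of the fractional integral $I_{0^+}^{\eta}$, it then suffices to prove $f_a\in W^\eta_T\cap W_{RL,0^+}^{\eta,1}([0,T])$ for each fixed $a\in[0,T)$, the case $a=T$ being trivial.

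Next I would check $f_a\in W^\eta_T$. The $L^1$ part is bounded by $T$. For the Gagliardo seminorm I note that $|f_a(t)-f_a(s)|=1$ precisely when $t$ and $s$ lie on opposite sides of $a$, so by symmetry
\[
\int_0^T\!\!\int_0^T \frac{|f_a(t)-f_a(s)|}{|t-s|^{1+\eta}}\,\dx t\,\dx s
= 2\int_0^a\!\!\int_a^T \frac{\dx t\,\dx s}{(t-s)^{1+\eta}}
= \frac{2}{\eta}\int_0^a\Big((a-s)^{-\eta}-(T-s)^{-\eta}\Big)\dx s .
\]
The only potential singularity is $(a-s)^{-\eta}$ as $s\uparrow a$, which is integrable precisely because $\eta<1$, while $(T-s)^{-\eta}\le (T-a)^{-\eta}$ stays bounded; hence the double integral is finite and $f_a\in W^\eta_T$.

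Then I would check $f_a\in W_{RL,0^+}^{\eta,1}([0,T])$, i.e. that $I_{0^+}^{\eta}(f_a)$ is absolutely continuous. A direct computation gives, for $t\ge a$,
\[
I_{0^+}^{\eta}(f_a)(t)=\frac{1}{\Gamma(\eta)}\int_a^t (t-s)^{\eta-1}\,\dx s=\frac{(t-a)^{\eta}}{\Gamma(\eta+1)},
\]
and $I_{0^+}^{\eta}(f_a)(t)=0$ for $t<a$; in short $I_{0^+}^{\eta}(f_a)(t)=\Gamma(\eta+1)^{-1}\big((t-a)_+\big)^{\eta}$. This function is the indefinite integral of $g(u):=\Gamma(\eta)^{-1}\big((u-a)_+\big)^{\eta-1}$, which belongs to $L^1([0,T])$ since $\int_a^T (u-a)^{\eta-1}\dx u=(T-a)^{\eta}/\eta<\infty$, again using $\eta<1$. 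Therefore $I_{0^+}^{\eta}(f_a)$ is absolutely continuous and $f_a\in W_{RL,0^+}^{\eta,1}([0,T])$.

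Combining the two displays with the linearity argument of the first paragraph yields both memberships for $R$ and $R^\Delta$ simultaneously, almost surely. The computations themselves are elementary; the only genuine point requiring care is the first paragraph, namely guaranteeing that almost every trajectory is a \emph{finite} combination of indicators, so that the triangle inequality can be applied and no convergence of an infinite series in the Sobolev norms needs to be addressed. For $R$ this rests on the integrability of the intensity provided by Assumption~\ref{ass:stability} and Lemma~\ref{lmm:intensite_bornee}, whereas for $R^\Delta$ finiteness is automatic from the grid structure.
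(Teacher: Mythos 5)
Your proof is correct, and its skeleton is the same as the paper's: write $R$ and $R^\Delta$ as (almost surely finite) linear combinations of indicators $\mathds 1_{[a,T]}$, then check membership of that single building block, which is exactly what the paper does through Lemma \ref{lem-indic-sob}; your computation of the Gagliardo seminorm of $\mathds 1_{[a,T]}$ matches the one in that lemma. The one genuine difference is how you handle the Riemann--Liouville space. You verify the definition directly: you compute $I_{0^+}^{\eta}(\mathds 1_{[a,T]})(t)=\Gamma(\eta+1)^{-1}\left((t-a)_+\right)^{\eta}$ and observe that this is the indefinite integral of the $L^1$ function $u\mapsto\Gamma(\eta)^{-1}\left((u-a)_+\right)^{\eta-1}$, hence absolutely continuous. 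The paper goes the other way round: Lemma \ref{lem-indic-sob} represents the indicator as $I_{0^+}^{\eta}(g_{a,\eta})$ for an explicit $g_{a,\eta}\in L^1$, i.e.\ proves membership in $I_{0^+}^{\eta}(L^1([0,T]))$, and then reaches $W_{RL,0^+}^{\eta,1}$ via the embedding theorem of Bergougnoux \emph{et al.} quoted just before the proposition (which formally requires a slight index shift $\eta'<\eta$). Your route is more self-contained, since it needs neither the embedding theorem nor that index adjustment, at essentially the same computational cost. You also make explicit a point the paper leaves implicit: that the number of jumps on $[0,T]$ is almost surely finite (via Lemma \ref{lmm:intensite_bornee} and $\E[N_T]<\infty$ for $R$, and the grid structure for $R^\Delta$), so that only finite linear combinations, the triangle inequality and the linearity of $I_{0^+}^{\eta}$ are invoked; this is a welcome clarification rather than a deviation.
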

\begin{proof}
The processes   $R$ and $R^{\Delta}$ are linear combination of indicators of finite sub-interval of the form $[a,T]$ with $a>0$ of $[0,T]$ which belongs to $W^{\eta,p}_T\cap W_{RL,0^+}^{\eta,1}([0,T])$ according to Lemma \ref{lem-indic-sob}.
\end{proof}

We now give a bound on the difference between the aggregate risk observed for the continuous-time risk process and its discrete-time counterpart, which will be crucial in proving convergence in the fractional Sobolev space. \\

\begin{Proposition}
\label{prop:main-R} Let $T>\Delta>0.$ Let $(h, \psi,\nu,b)$ fulfilling Assumptions \ref{ass:stability} {and \ref{ass:stability_discrete}}. 
    Let $R$ (resp. $R^\Delta$) be a continuous time (resp. discrete time) Hawkes risk process, defined by thinning from the {same} underlying Poisson measure $P$.\\
    There exists a constant $K$ such that for all $0\leq s \leq t \leq T$ 
    \begin{align*}
        \E \left [|(R_t-R_s)-(R^\Delta_t-R^\Delta_s)|\right] \leq K
     \left(C_R(h,\Delta) \left[ (t)_\Delta-(s)_\Delta\right]
        + \Delta \right),
    \end{align*}
    where $C_R(h,\Delta) $  is given in Definition \ref{rem-dec-const*}.


  
\end{Proposition}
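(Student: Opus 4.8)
The plan is to write both risk increments as integrals of $y\,\mathds 1_{\theta\le\cdot}$ against the \emph{common} Poisson measure $P$ and to compare the two acceptance thresholds. From the definitions one has
$$R_t-R_s=\int_{(s,t]\times\R_+\times\R}y\,\mathds 1_{\theta\le\lambda_u}\,P(\dx u,\dx\theta,\dx y),$$
while, since $\lambda^\Delta_{k\Delta}=l^\Delta_k$ is the threshold used to thin $P$ on $((k-1)\Delta,k\Delta]$, the discrete increment is
$$R^\Delta_t-R^\Delta_s=\int_{((s)_\Delta,(t)_\Delta]\times\R_+\times\R}y\,\mathds 1_{\theta\le\tilde\lambda^\Delta_u}\,P(\dx u,\dx\theta,\dx y),$$
where $\tilde\lambda^\Delta_u:=l^\Delta_k$ for $u\in((k-1)\Delta,k\Delta]$ is a predictable process. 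I would first dispose of the trivial case $(s)_\Delta=(t)_\Delta$, in which $R^\Delta_t=R^\Delta_s$ and $\E|R_t-R_s|\le\E|Y|\int_s^t\E\lambda_u\,\dx u\le K\Delta$ by the intensity bound of Lemma \ref{lmm:intensite_bornee}; this matches the right-hand side since then $(t)_\Delta-(s)_\Delta=0$.

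In the remaining case $(t)_\Delta\ge s$, I would split $(s,t]=(s,(t)_\Delta]\cup((t)_\Delta,t]$ and $((s)_\Delta,(t)_\Delta]=((s)_\Delta,s]\cup(s,(t)_\Delta]$, producing three contributions after subtraction: a \emph{bulk} term on the common region $(s,(t)_\Delta]$ comparing the two thresholds, a positive boundary term on $((t)_\Delta,t]$ (from $R$ only), and a negative boundary term on $((s)_\Delta,s]$ (from $R^\Delta$ only). For the bulk term I would use $|\mathds 1_{\theta\le a}-\mathds 1_{\theta\le b}|=\mathds 1_{a\wedge b<\theta\le a\vee b}$ together with $|\sum y_i\mathds 1|\le\sum|y_i|\mathds 1$, so that the absolute value of the integral is dominated by the integral of $|y|\,\mathds 1_{\lambda_u\wedge\tilde\lambda^\Delta_u<\theta\le\lambda_u\vee\tilde\lambda^\Delta_u}$ against $P$. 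Since that integrand is predictable, the compensation formula turns the expectation into $\E|Y|\int_{(s,(t)_\Delta]}\E|\lambda_u-\tilde\lambda^\Delta_u|\,\dx u$ after performing the $\theta$- and $y$-integrations.

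It then remains to bound $\E|\lambda_u-\tilde\lambda^\Delta_u|$ uniformly on each cell $((k-1)\Delta,k\Delta]$. I would insert the continuous intensity at the \emph{right} grid point and write $\E|\lambda_u-\tilde\lambda^\Delta_u|\le\E|\lambda_u-\lambda_{k\Delta}|+\E|\lambda_{k\Delta}-l^\Delta_k|$. The second term is exactly $\E|\lambda_{(v)_\Delta}-\lambda^\Delta_v|$ evaluated at the grid point $v=k\Delta$, hence controlled by $K\,C_R(h,\Delta)C_S(h,\Delta)$ through Lemma \ref{lmm:comp-intens}; the first term is a within-cell regularity estimate for the continuous intensity, bounded by $K\,C_R(h,\Delta)$ by repeating verbatim the computation of Lemma \ref{lmm:delta} with the endpoint $k\Delta$ in place of $(u)_\Delta$ (the two shift integrals produced are $\int_0^\Delta|h|$ and $\sup_{\epsilon\in[0,\Delta]}\int|h(\cdot+\epsilon)-h(\cdot)|$, both $\le C_R$). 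Folding $\E|Y|$ and $C_S(h,\Delta)$ into $K$ and using $(t)_\Delta-s\le(t)_\Delta-(s)_\Delta$ gives the bulk bound $K\,C_R(h,\Delta)\,[(t)_\Delta-(s)_\Delta]$.

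For the two boundary terms I would again pass to $|y|$ and apply the compensation formula: the contribution from $((t)_\Delta,t]$ is at most $\E|Y|\int_{((t)_\Delta,t]}\E\lambda_u\,\dx u$ and that from $((s)_\Delta,s]$ at most $\E|Y|\int_{((s)_\Delta,s]}\E\tilde\lambda^\Delta_u\,\dx u$; each interval has length at most $\Delta$ and the expected intensities are uniformly bounded by the continuous and discrete intensity-bound lemmas, so both are $\le K\Delta$. Summing the three pieces yields the stated inequality. The main obstacle I anticipate is the index bookkeeping for $\tilde\lambda^\Delta_u$: one must ensure the threshold comparison is made at $k\Delta$, where Lemma \ref{lmm:comp-intens} directly applies, rather than at $(u)_\Delta$ (where the discrete intensity is $l^\Delta_{k-1}\neq l^\Delta_k$), and one must isolate precisely the two boundary cells near $s$ and $t$ that generate the additive $\Delta$ term. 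Once this decomposition is correctly set up, the analytic estimates are routine.
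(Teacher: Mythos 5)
Your proposal is correct and follows essentially the same route as the paper: project the increment onto the grid so that two boundary cells contribute $O(\Delta)$ via the intensity bounds (Lemmas \ref{lmm:intensite_bornee} and \ref{lmm:rec}), then bound the bulk term through the compensation formula and the triangle inequality $\E|\lambda_u-\tilde\lambda^\Delta_u|\le\E|\lambda_u-\lambda_{k\Delta}|+\E|\lambda_{k\Delta}-l^\Delta_k|$, invoking a right-endpoint variant of Lemma \ref{lmm:delta} together with Lemma \ref{lmm:comp-intens}. The only cosmetic differences are that the paper cuts the bulk region at $(s)_\Delta$ rather than at $s$ (so its near-$s$ boundary term involves $R$ instead of $R^\Delta$) and anchors the intensity comparison at the left grid point $(u)_\Delta$; your right-endpoint anchoring, which cleanly resolves the off-by-one indexing you flag, is the same device the paper itself uses in the proof of Proposition \ref{prop:martingale}.
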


The proof of this proposition is given in Section \ref{sec:appendix}. Before we move to strong convergence results, we give the following numerical illustration of Proposition \ref{prop:main-R} for the simple Hawkes process of kernel $h(t)=\frac{0.6\cdot \cos(t)}{1+t^2}$.\\
\begin{figure}[h!]
    \centering
    \includegraphics[width=120mm]{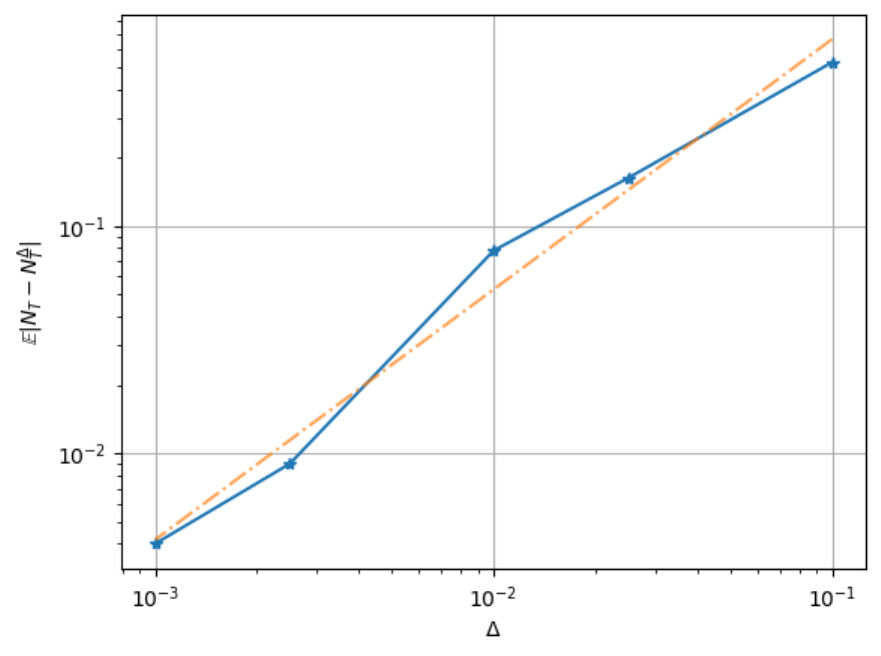}
    \caption{Blue: A Monte Carlo approximation of $\E |N_T-N^{\Delta}_T|$ for $T=5$. Orange: The least square linear approximation. Its equation is $y= 8.4 \cdot \Delta^{1.1}$. }
    \label{fig:enter-label}
\end{figure}

We are now ready to give the first general strong convergence result for the discrete-time Hawkes risk processes.
\begin{Theorem}\label{thm-Sobolev}
Let $T>0.$ Let $(h, \psi,\nu,b)$ fulfilling Assumptions \ref{ass:stability} and \ref{ass:stability_discrete}. 
    Let $R$ (resp. $R^\Delta$) be a continuous time (resp. discrete time) Hawkes risk process. We have that

    $$\E \|R-R^\Delta\|_{W^\eta_T} \leq K\left[T^{2} C_R(h,\Delta) + T\Delta^{1-\eta}\right].$$
    where 
    $C_R(h,\Delta)$ is  defined in Definition \ref{rem-dec-const*}
    and $K$ is a positive multiplicative constant depending on $\eta$ that does not depend on $T$ nor $\Delta$.
 
\end{Theorem}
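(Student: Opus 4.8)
The plan is to estimate each of the two pieces of the norm directly. Writing $D := R-R^\Delta$, the quantity to control is
$$\|D\|_{W^\eta_T} = \int_0^T |D_t|\,\dx t + \int_0^T\int_0^T \frac{|D_t - D_s|}{|t-s|^{1+\eta}}\,\dx t\,\dx s,$$
so after taking expectations and invoking Fubini--Tonelli it suffices to bound the two terms separately. For the pure $L^1$ term I would apply Proposition \ref{prop:main-R} with $s=0$ (so that $R_0=R^\Delta_0=0$ and $(0)_\Delta=0$), giving $\E|D_t| \le K\big(C_R(h,\Delta)\,(t)_\Delta + \Delta\big) \le K\big(C_R(h,\Delta)\,T + \Delta\big)$; integrating over $t\in[0,T]$ produces a contribution of order $T^2 C_R(h,\Delta) + T\Delta$, and since $\Delta \le \Delta^{1-\eta}$ for $\Delta \le 1$ this is already dominated by the claimed bound.

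The substantive work is the Gagliardo double integral. By symmetry I would restrict to $s<t$ and double, then split the domain according to distance to the diagonal, along $\{t-s \ge \Delta\}$ and $\{t-s < \Delta\}$. On the \emph{far} region the singular weight is integrable and Proposition \ref{prop:main-R} applies as is: using $(t)_\Delta - (s)_\Delta < t-s+\Delta \le 2(t-s)$ one gets $\E|D_t-D_s| \le K\big(2C_R(h,\Delta)(t-s) + \Delta\big)$. The part carrying $C_R(h,\Delta)(t-s)$ meets the weight $(t-s)^{-1-\eta}$ to leave $(t-s)^{-\eta}$, whose $s$-integral is $\int_\Delta^t r^{-\eta}\,\dx r \le T^{1-\eta}/(1-\eta)$, hence a total of order $T^{2-\eta}C_R(h,\Delta)$, controlled by the stated $T^2 C_R(h,\Delta)$ term; the constant term $\Delta$ is integrated against $(t-s)^{-1-\eta}$ only down to the cutoff $\Delta$, giving $\int_\Delta^t r^{-1-\eta}\,\dx r \le \Delta^{-\eta}/\eta$ and a total of order $T\Delta^{1-\eta}$.

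The delicate part, and the main obstacle, is the \emph{near-diagonal} region $\{t-s<\Delta\}$, where Proposition \ref{prop:main-R} is useless because its constant term $\Delta$ is not integrable against $(t-s)^{-1-\eta}$. Here I would instead split $\E|D_t-D_s| \le \E|R_t - R_s| + \E|R^\Delta_t - R^\Delta_s|$ and use the intensity bound of Lemma \ref{lmm:intensite_bornee}. For the continuous process, writing $R_t-R_s=\sum_{s<\tau_k\le t}Y_k$ and passing through the Poisson compensator gives $\E|R_t - R_s| \le \E|Y|\,\E\!\int_s^t \lambda_u\,\dx u \le C\,(t-s)$ (here $\E|Y|<\infty$ by Assumption \ref{ass:stability}), so that $\int C(t-s)\,(t-s)^{-1-\eta}$ over $\{t-s<\Delta\}$ again yields only $T\Delta^{1-\eta}$. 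For the discrete process one must be careful that $R^\Delta$ jumps only at grid points: $\E|R^\Delta_t-R^\Delta_s|$ vanishes when $s,t$ lie in one cell and is at most $C\Delta$ when they straddle the (unique, since $t-s<\Delta$) intermediate grid point. Restricting the $s$-integration to $(t-\Delta,(t)_\Delta)$ turns the $\Delta$-contribution into $\tfrac{\Delta}{\eta}\big((t-(t)_\Delta)^{-\eta}-\Delta^{-\eta}\big)$, and summing over the $\sim T/\Delta$ cells (using $\int_0^\Delta u^{-\eta}\,\dx u=\Delta^{1-\eta}/(1-\eta)$ per cell) gives once more a bound of order $T\Delta^{1-\eta}$. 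Collecting the four contributions yields $\E\|R-R^\Delta\|_{W^\eta_T} \le K\big(T^2 C_R(h,\Delta) + T\Delta^{1-\eta}\big)$. I expect the grid-straddling bookkeeping in this near-diagonal regime, together with keeping the $(1-\eta)^{-1}$ and $\eta^{-1}$ constants absorbed into $K$, to be the only genuinely fiddly step.
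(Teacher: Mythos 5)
Your proof is correct and follows essentially the same strategy as the paper: both bound the $L^1$ term via Proposition \ref{prop:main-R} with $s=0$, and both split the Gagliardo double integral so that Proposition \ref{prop:main-R} (with $(t)_\Delta-(s)_\Delta\leq 2(t-s)$) handles pairs at distance at least $\Delta$, while pairs near the diagonal are treated by the triangle inequality together with the compensator bound $\E|R_t-R_s|\leq K(t-s)$ (Lemma \ref{lmm:intensite_bornee}) and the one-cell bound $\E|R^\Delta_t-R^\Delta_s|\leq K\Delta$ (Lemma \ref{lmm:rec}). The only cosmetic difference is that you organize the near-diagonal region by the condition $t-s<\Delta$, whereas the paper indexes it by same versus adjacent bins; the estimates and resulting rates $T^{2-\eta}C_R(h,\Delta)$ and $T\Delta^{1-\eta}$ are identical.
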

{\begin{Remark}
    The rate of convergence in $\Delta$  decreases with the parameter $\eta$ and vanishes when $\eta$ goes to 1.
\end{Remark}}
The strong approximation result in Theorem \ref{thm-Sobolev} is not useful for numerical approximations, because the dependence of $C_R(h,\Delta)$ on $\Delta$ is not explicit a priori. It turns out that, if the kernel is of finite $p-$variations, one can give the order of convergence in $\Delta$. \\
For $p\geq 1$, the $p-$variation of a function $f:[0,T]\to \R$ is defined as
$$\|f\|_{p-var}:=\left(\sup _{ \mathcal D} \sum_{t_i\in\mathcal D} |f(t_{i+1})-f(t_i)|^p\right)^{1/p},$$
where $\mathcal D$ is the set of subdivisions of $[0,T]$. \\
The set of functions of finite $p-$variation contains piecewise Hölder functions of a Hölder index $p^{-1} \in(0,1]$. As we will see in the next Corollary, these functions are regular enough to have more explicit rates of convergence. Using Lemma \ref{lmn-p-var-majo}, we derive: 
\begin{Corollary}

    \label{cor:sobolev-p-var}
    Let $T> 0$ and $p\geq 1$ and assume that Assumption \ref{ass:stability} holds. Moreover, assume that the kernel $h$ is of a finite $p-$variation $\|h\|_{p-var}$. Then, for $\Delta$ small enough, {Assumption \ref{ass:limit}} is fulfilled and
    $$C_R(h,\Delta) \leq K\left[ \|h\|_{p-var} T^{\frac{p-1}{p}}\Delta^{\frac{1}{p}}+ \Delta \|h\|_{\infty}\right].$$
    Moreover
    $$\E \|R-R^\Delta\|_{W^\eta_T}\leq K_{\eta}\left[\|h\|_{p-var}T^{\frac{3p-1}{p}}\Delta^{\frac{1}{p}}+ T \Delta^{1-\eta}+\Delta \|h\|_{\infty}\right],$$
    for some positive constant $K_{\eta}$ that does not depend on $T$ nor $\Delta$. 
\end{Corollary}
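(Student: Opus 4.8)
The plan is to estimate the three terms defining $C_R(h,\Delta)$ in Definition \ref{rem-dec-const*} separately, using the finite $p$-variation of $h$, and then to feed the resulting bound into Theorem \ref{thm-Sobolev}. I would first dispose of the boundary term $\int_0^\Delta |h(y)|\dx y$, which is trivially at most $\Delta\|h\|_\infty$ and accounts for the $\Delta\|h\|_\infty$ contribution.

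The substantive work lies in the two shift terms, and this is where I expect to lean on Lemma \ref{lmn-p-var-majo}. For the uniform shift $\sup_{\epsilon\in[0,\Delta]}\int_0^{T-\Delta}|h(y+\epsilon)-h(y)|\dx y$ I would fix $\epsilon\leq\Delta$, slice $[0,T]$ along the arithmetic progression started at $y\in[0,\epsilon)$ into the $\lfloor T/\epsilon\rfloor$ consecutive increments $h(y+(k+1)\epsilon)-h(y+k\epsilon)$, and observe that for fixed $y$ these are increments over a single partition, so the sum of their $p$-th powers is at most $\|h\|_{p-var}^p$. Applying Hölder's inequality across the at most $T/\epsilon$ terms and then integrating in $y\in[0,\epsilon)$ (a change of variables that reconstitutes the $L^1$ shift integral) yields
\[
\int_0^{T-\epsilon}|h(y+\epsilon)-h(y)|\dx y \leq \|h\|_{p-var}\,T^{(p-1)/p}\,\epsilon^{1/p},
\]
and since $\epsilon\leq\Delta$ the supremum is controlled by $\|h\|_{p-var}T^{(p-1)/p}\Delta^{1/p}$.

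For the projection term $\int_0^{T-\Delta}|h(y)-h((y)_\Delta+\Delta)|\dx y$ I would argue block by block: on $[k\Delta,(k+1)\Delta)$ the integrand equals $|h(y)-h((k+1)\Delta)|$, bounded by the oscillation of $h$ on that block, itself at most the local $p$-variation $V_k:=\|h\|_{p-var,[k\Delta,(k+1)\Delta]}$. Integrating gives $\Delta\sum_k V_k$, and by superadditivity of $p$-variation ($\sum_k V_k^p\leq\|h\|_{p-var}^p$) together with Hölder over the $M=T/\Delta$ blocks, $\Delta\sum_k V_k\leq \Delta\,(T/\Delta)^{(p-1)/p}\|h\|_{p-var}=\|h\|_{p-var}T^{(p-1)/p}\Delta^{1/p}$. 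Summing the three estimates yields the claimed bound on $C_R(h,\Delta)$. Since this bound tends to $0$ with $\Delta$, Assumption \ref{ass:limit} holds for small $\Delta$, so Lemma \ref{lmm:superfluous} lets us drop Assumption \ref{ass:stability_discrete}.

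Finally I would insert the $C_R$ estimate into Theorem \ref{thm-Sobolev}, whose bound is $K[T^2 C_R(h,\Delta)+T\Delta^{1-\eta}]$. The dominant term $T^2\cdot\|h\|_{p-var}T^{(p-1)/p}\Delta^{1/p}$ collects the powers $2+(p-1)/p=(3p-1)/p$ of $T$, producing $\|h\|_{p-var}T^{(3p-1)/p}\Delta^{1/p}$; the $T\Delta^{1-\eta}$ term is carried over unchanged, and the $\|h\|_\infty$ piece contributes only at the lower order $\Delta$, giving the stated inequality. The main obstacle is the two $p$-variation slicing arguments: the interplay between Hölder's inequality and the superadditivity of $p$-variation must be balanced so that the number of blocks ($T/\epsilon$ or $T/\Delta$) produces exactly the exponent $(p-1)/p$ in $T$ and the exponent $1/p$ in $\Delta$, and no worse.
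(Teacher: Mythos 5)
Your proposal is correct and takes essentially the same route as the paper: the paper proves exactly your three bounds on the terms of $C_R(h,\Delta)$ in Lemma \ref{lmn-p-var-majo} (with the identical slice-by-arithmetic-progression plus H\"older argument for the shift term), deduces Assumption \ref{ass:limit} and hence Assumption \ref{ass:stability_discrete} for small $\Delta$ via Lemma \ref{lmm:superfluous}, and then plugs the estimate into Theorem \ref{thm-Sobolev} just as you do. The only cosmetic difference is the projection term, which you bound by local oscillations and superadditivity of $p$-variation while the paper integrates over the offset within each block and applies H\"older to the resulting partition increments; these are the same idea, and your version is, if anything, slightly cleaner.
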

{\begin{Remark}
The parameter $p$ measures the  regularity of the kernel $h.$
    The rate of convergence in $\Delta$  is a decreasing  function of $p,$ and in $T$ an increasing function of $p.$ 
\end{Remark}}
We now turn to the proof of Theorem \ref{thm-Sobolev}.
\begin{proof} 
First, we recall the result of Proposition \ref{prop:main-R}
\begin{align*}
    \E|(R_t-R_s)-(R^\Delta_t-R^\Delta_s)| \leq &K  \left[C_R(h,\Delta)\left((t)_\Delta-(s)_\Delta\right)+ \Delta\right].
\end{align*}

 The norm of the difference between the continuous time risk process and its discrete time counterpart in the fractional Sobolev space writes
    \begin{align*}
        \E \|R-R^\Delta\|_{W^\eta_T} &=\int_0^T \E|R_t-R^\Delta_t| \dx t +   \int_0^T \int_0^T \frac{\E|(R_t-R_s)-(R^\Delta_t-R^\Delta_s)|}{|t-s|^{1+\eta }} \dx t \dx s.
    \end{align*}
      We bound each of the three terms individually. 
    We start with second term. Thanks to the symmetry of the integrals with respect to $s$ and $t$
 
    \begin{align*}
        \int_0^T \int_0^T \frac{\E|(R_t-R_s)-(R^\Delta_t-R^\Delta_s)|}{|t-s|^{1+\eta }} &\dx t \dx s \\
        =& \sum_{i,j=1}^M \int_{(i-1)\Delta}^{i\Delta\wedge T}\int_{(j-1)\Delta}^{j\Delta\wedge T}  \frac{\E|(R_t-R_s)-(R^\Delta_t-R^\Delta_s)|}{|t-s|^{1+\eta }} \dx t \dx s\\
        =&\sum_{i=1}^M\int_{(i-1)\Delta}^{i\Delta\wedge T} \int_{(i-1)\Delta}^{i\Delta\wedge T} \frac{\E|(R_t-R_s)-(R^\Delta_t-R^\Delta_s)|}{|t-s|^{1+\eta }} \dx t \dx s\\
        &+2\sum_{i=1}^{M-1}\int_{(i-1)\Delta}^{i\Delta} \int_{i\Delta}^{(i+1)\Delta \wedge T} \frac{\E|(R_t-R_s)-(
R^\Delta_t-R^\Delta_s)|}{|t-s|^{1+\eta }} \dx t \dx s\\
        &+2\sum_{i=1}^{M-2}\sum_{j=i+2}^M  \int_{(i-1)\Delta}^{i\Delta} \int_{(j-1)\Delta}^{j\Delta\wedge T} \frac{\E|(R_t-R_s)-(R^\Delta_t-R^\Delta_s)|}{|t-s|^{1+\eta }} \dx t \dx s.
    \end{align*}
  For the first term, since $s$ and $t$ are in the same interval $[(i-1)\Delta, i \Delta),$ we have that $R_t^\Delta- R_s^\Delta=0.$ Moreover
  \begin{align*}
      \E|R_t-R_s| \leq \E \left[\sum_{i=N_s+1}^{N_t} |Y_i|\right]
      = \E[|Y|]\E[N_t-N_s]
      = K \int_s^t \E[\lambda_u]du.
      \end{align*}
      Using  Lemma \ref{lmm:intensite_bornee} and the fact that $\rho_{h} <1$ we have 
  \begin{align*}
      \E|R_t-R_s|    &\leq \frac{K(t-s)}{1-\rho_{h}}\\
      &\leq K  (t-s).
  \end{align*}
  Therefore
    \begin{align*}
         \sum_{i=1}^M \int_{(i-1)\Delta}^{i\Delta\wedge T}\int_{(i-1)\Delta}^{i\Delta\wedge T}  \frac{\E|(R_t-R_s)-(R^\Delta_t-R^\Delta_s)|}{|t-s|^{1+\eta }} \dx t \dx s =&  \sum_{i=1}^M \int_{(i-1)\Delta}^{i\Delta\wedge T}\int_{(i-1)\Delta}^{i\Delta\wedge T}  \frac{\E|R_t-R_s|}{|t-s|^{1+\eta }} \dx t \dx s\\
         \leq & K\sum_{i=1}^M \int_{(i-1)\Delta}^{i\Delta\wedge T}\int_{(i-1)\Delta}^{i\Delta\wedge T}  \frac{|t-s|}{|t-s|^{1+\eta }} \dx t \dx s\\ 
         \leq & K\sum_{i=1}^M \int_{(i-1)\Delta\wedge T}^{i\Delta\wedge T}\int_{(i-1)\Delta}^{t}  \frac{ \dx s}{(t-s)^{\eta }} \dx t \\ 
         \leq & K\sum_{i=1}^M \int_{(i-1)\Delta}^{i\Delta\wedge T} \left(t-(i-1)\Delta \right)^{1-\eta} \dx t \\
         \leq & K_{\eta} T \Delta^{1-\eta}.
    \end{align*}
    When $s$ and $t$ are in adjacent bins, $R^\Delta_t-R^\Delta_s=\sum_{k=1}^{D^\Delta_{n_t}}Y_j$ in distribution, where \\ $D^\Delta_{n_t}= \int_{((n_t-1)\Delta, n_t \Delta]\times \R_+ \times \R} \mathds 1_{\theta \leq \lambda_u^{\Delta}} P (\dx u, \dx \theta, \dx y).$ 
   Thus 
    \begin{align*}
        \sum_{i=1}^{M-1}\int_{(i-1)\Delta}^{i\Delta} \int_{i\Delta}^{(i+1)\Delta\wedge T} &\frac{\E|(R_t-R_s)-(R^\Delta_t-R^\Delta_s)|}{|t-s|^{1+\eta }} \dx t \dx s\\
        = & \sum_{i=1}^{M-1}\int_{(i-1)\Delta}^{i\Delta} \int_{i\Delta}^{(i+1)\Delta\wedge T} \frac{\E|R_t-R_s-\sum_{j=1}^{D^\Delta_{n_t}-1}Y_j|}{|t-s|^{1+\eta }} \dx t \dx s\\
        \leq & \sum_{i=1}^{M-1}\int_{(i-1)\Delta}^{i\Delta} \int_{i\Delta}^{(i+1)\Delta\wedge T} \frac{\E|R_t-R_s| + \E |Y|\E D^\Delta_{n_t}}{|t-s|^{1+\eta }} \dx t \dx s.\\
        \end{align*}
And since $\E D^\Delta_{n_t} = \int_{(n_t-1)\Delta}^{n_t \Delta}\E \lambda_u^{\Delta} \dx u ,$ we bound $\lambda_u^{\Delta} $ using Lemma \ref{lmm:rec} and the definition of $C_S(h,\Delta)$ given in Definition \ref{rem-dec-const*}

        \begin{align*}
        \sum_{i=1}^{M-1}\int_{(i-1)\Delta}^{i\Delta} \int_{i\Delta}^{(i+1)\Delta\wedge T} &\frac{\E|(R_t-R_s)-(R^\Delta_t-R^\Delta_s)|}{|t-s|^{1+\eta }} \dx t \dx s\\
        \leq & K\sum_{i=1}^{M-1}\int_{(i-1)\Delta}^{i\Delta} \int_{i\Delta}^{(i+1)\Delta\wedge T} \frac{1}{(t-s)^\eta}+ \frac{\Delta}{(t-s)^{1+\eta}} \dx t \dx s \\
        \leq & K\sum_{i=1}^{M-1}\int_{(i-1)\Delta}^{i\Delta} \Big(((i+1)\Delta-s)^{1-\eta}-(i\Delta-s)^{1-\eta}\\
        &- \Delta((i+1)\Delta-s)^{-\eta}+\Delta (i\Delta-s)^{-\eta} \Big)\dx s \\
        =& K \sum_{i=1}^{M-1}\int_{0}^{\Delta} (x+\Delta)^{1-\eta}- x^{1-\eta}-\Delta (x+\Delta)^{-\eta}+ \Delta x^{-\eta} \dx x \\
        \leq&K \sum_{i=1}^{M-1} \Delta^{2-\eta}\\
        \leq & K_{\eta}T \Delta^{1-\eta}.
    \end{align*}

  We now treat the third case scenario, when $s$ and $t$ are separated by more than one bin. Using the upper bound of Proposition \ref{prop:main-R} and keeping in mind that {  $\Delta <t-s$ thus } $((t)_\Delta-(s)_\Delta) \leq (t-s+\Delta)\leq 2(t-s)$  Thus using the result of Proposition \ref{prop:main-R} using Remark \ref{rem-dec-const*} we have that 
    \begin{align*}
        \sum_{i=1}^{M-2}\sum_{j=i+2}^M  \int_{(i-1)\Delta}^{i\Delta} \int_{(j-1)\Delta}^{j\Delta\wedge T} &\frac{\E|(R_t-R_s)-(R^\Delta_t-R^\Delta_s)|}{\E |Y_1||t-s|^{1+\eta }} \dx t \dx s \\& \leq K \sum_{i=1}^{M-2}\sum_{j=i+2}^M  \int_{(i-1)\Delta}^{i\Delta} \int_{(j-1)\Delta}^{j\Delta\wedge T} \frac{ C_R(h,\Delta)(t-s)+ \Delta }{|t-s|^{1+\eta }} \dx t \dx s\\
        &\leq  K \sum_{i=1}^{M-2} \int_{(i-1)\Delta}^{i\Delta}  \int_{(i+1)\Delta}^T \frac{C_R(h,\Delta) (t-s)+ \Delta }{|t-s|^{1+\eta }} \dx t \dx s\\
        &\leq K\sum_{i=1}^{M-2} \int_{(i-1)\Delta}^{i\Delta} C_R(h,\Delta) (t-s)^{1-\eta}+ \Delta ( (i+1)\Delta -s)^{-\eta} \dx s\\
        &\leq K_{\eta} \left[C_R(h,\Delta)T^{2-\eta} +  T\Delta^{1-\eta}\right].
    \end{align*}
    For the first term, using the fact that $t_{\Delta} \leq t$ we have
    \begin{align*}
          \int_0^T  {\E|R_t-R^\Delta_t|}\dx t  & \leq K\int_0^T  ( C_R(h,\Delta )t_{\Delta} +\Delta)\dx t \\
          &\leq K\int_0^T  ( C_R(\Delta) t +\Delta)\dx t \\
          &\leq K \left[C_R(\Delta) T^2 + \Delta T\right].
    \end{align*}
    \end{proof}

This yields the first explicit speed of convergence of the discrete-time marked Hawkes risk for a rich family of kernels. We would like to point out, that convergence rates can also be obtained for different kernels that do not lie in the set of functions with finite $p-$variations. A particularly interesting example is the Hawkes process driven by the  kernel $$h(t)=\frac{C}{\sqrt{t}}\mathds 1_{t\in (0,T]},$$
where $C>0$ is a constant that ensures that Assumption \ref{ass:stability} is in force. \\
Since this kernel is not of finite $p-$variation, one should verify that Assumption \ref{ass:stability_discrete} holds. This is possible because the sum of the inverses of the square root can be bounded by the integral of $\frac{1}{\sqrt x}$. Once this is done, we can apply Theorem \ref{thm-Sobolev}. Using the fact that $h$ is decreasing on $(0,T]$, it is possible to bound the modulus of continuity of the shift operator in an elementary fashion: 
\begin{align*}
    \int_0^{T-\Delta} |h(y+\epsilon)-h(y)|\dx y &= \int_0^{T-\Delta} h(y)-h(y+\epsilon) \dx y \\
    &=\int_0^{T-\Delta} h(y) \dx y  - \int_0^{T-\Delta} h(y+\epsilon) \dx y \\
    &=O(\Delta^{\frac{1}{2}}).
\end{align*}
Hence, $$\E \|R^\Delta- R\|_{W^\eta_T} \leq K (T \Delta^{\frac{1}{2}}+ T \Delta^{1-\eta}).$$
This rate is slower than the rate of convergence for Hawkes risks whose kernels are of bounded variations $K(T^2 \Delta + T \Delta^{1-\eta})$, which is natural because of the singularity of the inverse square root near zero. \\
We now prove the convergence in the space of càdlàg functions equipped with the Skorokhod metric for a class of Hawkes processes. 

\subsection{Convergence in the Skorokhod space}
We call $\mathbb D([0,T],{\mathbb R})$ the space of right continuous functions, with left limit (càdlàg). The canonical metric over this space is the Skorokhod metric defined by \eqref{def:Skorokhod}. The fact that this distance allows for small uncertainties in time, unlike the uniform distance, ensures that it is well adapted for Hawkes risk processes whether in continuous or discrete time. The different properties of the Skorokhod space can be found in the book \cite{Billingsley}. Let $\Lambda$ denote the class of strictly increasing continuous mapping of $|0,T]$ into itself such that $\lambda(0)=0,$
$\lambda(T)=T.$ For $x$ and $y$ in $\mathbb D([0,T])$
\begin{align*}
    d_S(x,y)= \inf_{\lambda\in \Lambda}\|\lambda-I\|_{\infty} \wedge \|x-y\|_{\infty}
\end{align*}
where $I$ is the identity map from $[0,T].$
\\

Since the jump times of the continuous time Hawkes risk $R$ occur almost surely outside of the time grid $\sigma=(k\Delta)_{k=1,\cdots,M}$, the projection $(A_\sigma R)_{t\in [0,T]} := (R_{(t)_\Delta})_{t\in [0,T]}$ represents an intermediate process between $R$ and $R^\Delta$, hence
$$d_S(R,R^\Delta) \leq d_S(R,A_\sigma R) + d_S (A_\sigma R, R^\Delta).$$
The first term on the left hand side simply evaluates the distance between the path of $R$ and its projection on the grid, and is bounded (\textit{cf.} \cite{Billingsley} Lemma 3 page 127) by
$$d_S(R,A_\sigma R) \leq \Delta \vee  \omega'_R(\Delta),$$
where $\omega'_R(\Delta)$ is the $\Delta$ modulus of continuity for càdlàg process 

    $$\omega'_X(\Delta)=\inf_{\Delta-\text{sparse}}\max _{1\leq i \leq M} \sup_{u,v \in [t_{i-1},t_{i})}|X_u-X_v|,$$
    the infimum being taken on the set of all partitions $\{t_i\}$ of $[0,T]$ such that $\min t_i-t_{i-1} > \Delta$. Therefore, the Skorokhod distance between the continuous-time Hawkes risk and its discrete-time counterpart is controlled by

$$d_S(R, R^\Delta) \leq \Delta + \omega'_R(\Delta)+ d_S (A_\sigma R, R^\Delta).$$

It is then enough to control the regularity of $R$  (using $(\omega'_R(\Delta))$ and its distance from $R^\Delta$ on the points of the grid $(d_S (A_\sigma R, R^\Delta))$. Indeed, noticing that both $A_\sigma R$ and $R^\Delta$ are constant on $[k\Delta, (k+1)\Delta)$ for $k=1,\cdots,M$, it is immediate to see that 
\begin{align*}
    d_S(A_\sigma R , R^\Delta) & \leq \|A_\sigma R - R^\Delta\|_\infty\\
    &= \max_{k=1,\cdots,M} |R_{k\Delta}-R^\Delta_{k\Delta}|.
\end{align*}
Yielding 
\begin{equation}
\label{ineq:modulus-b}
d(R,R^\Delta) \leq \Delta +  \omega'_R(\Delta) + \max_{k=1,\cdots,M} |R_{k\Delta}-R^\Delta_{k\Delta}|.
\end{equation}
Before giving an upper bound on the distance between the two processes evaluated on the points of the grid, we remind the reader that $\mathcal F$ is the filtration associated with the common underlying Poisson measure $P$. 

\begin{Proposition}
\label{prop:martingale}
    Assume that Assumptions \ref{ass:stability} and {\ref{ass:limit}} or \ref{ass:stability_discrete} are in force. Assume also that $\nu$ has a finite second moment. Let 
    $$\Xi_k:= R_{k\Delta}-\E Y\int_0^{k\Delta} \lambda _s \dx s$$
    and 
    $$\Xi^\Delta_k :=R^{\Delta}_{k\Delta} - \E Y  \sum _{i=1}^k \lambda^\Delta_{i\Delta} \Delta.$$
    Then, $(\Xi_{k})_{k=0,\cdots, M}$ and $(\Xi^\Delta_k)_{k=0,\cdots, M}$ are $(\mathcal F_{k\Delta})_{k=1,\cdots,M}-$martingales. Moreover, 
   $$\mathbb E \left[\max_{k=1,\cdots,M} |\Xi_{k}-\Xi^\Delta_{k}|\right ] \leq K \sqrt{T C_R(h,\Delta)}.$$
    Where $K$ is a positive multiplicative constant that does not depend on $T$ nor $\Delta$ and $C_R(h,\Delta)$ is defined in Definition \ref{rem-dec-const*}. 
\end{Proposition}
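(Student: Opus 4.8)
The plan is to realise both $\Xi$ and $\Xi^{\Delta}$ as stochastic integrals against the compensated measure $\tilde P(\dx s,\dx \theta,\dx y):=P(\dx s,\dx \theta,\dx y)-\dx s\,\dx \theta\,\nu(\dx y)$, to reduce the maximum over the grid to the terminal value by Doob's inequality, and finally to estimate the terminal second moment through an It\^o-type isometry. First I would settle the martingale property. Writing $R_{t}=\int_{(0,t]\times \R_{+}\times \R} y\,\mathds 1_{\theta\le \lambda_{s}}\,P(\dx s,\dx \theta,\dx y)$, the identities $\int_{\R_{+}}\mathds 1_{\theta\le \lambda_{s}}\dx \theta=\lambda_{s}$ and $\int_{\R}y\,\nu(\dx y)=\E Y$ show that $\E Y\int_{0}^{t}\lambda_{s}\dx s$ is the predictable compensator, so $\Xi_{k}=\int_{(0,k\Delta]\times\R_{+}\times\R} y\,\mathds 1_{\theta\le \lambda_{s}}\,\tilde P$. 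Introducing the predictable (left-continuous) version $\bar\lambda^{\Delta}_{s}:=\lambda^{\Delta}_{i\Delta}$ for $s\in((i-1)\Delta,i\Delta]$, Definition \ref{def:hawkes_discrete} yields $R^{\Delta}_{k\Delta}=\int_{(0,k\Delta]\times\R_{+}\times\R} y\,\mathds 1_{\theta\le \bar\lambda^{\Delta}_{s}}\,P$ and $\sum_{i=1}^{k}\lambda^{\Delta}_{i\Delta}\Delta=\int_{0}^{k\Delta}\bar\lambda^{\Delta}_{s}\dx s$, hence $\Xi^{\Delta}_{k}=\int_{(0,k\Delta]\times\R_{+}\times\R} y\,\mathds 1_{\theta\le \bar\lambda^{\Delta}_{s}}\,\tilde P$. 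Both integrands are predictable, and the finite second moment of $\nu$ together with the bounds $\E\lambda_{s}\le K$ and $\E\lambda^{\Delta}_{s}\le K$ from Lemmas \ref{lmm:intensite_bornee} and \ref{lmm:rec} make the integrals square-integrable, so $(\Xi_{k})_{k}$ and $(\Xi^{\Delta}_{k})_{k}$ are $(\mathcal F_{k\Delta})$-martingales.

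For the maximal bound, set $D_{k}:=\Xi_{k}-\Xi^{\Delta}_{k}$, again an $(\mathcal F_{k\Delta})$-martingale, with the single representation
\begin{equation*}
D_{k}=\int_{(0,k\Delta]\times\R_{+}\times\R} y\,\bigl(\mathds 1_{\theta\le \lambda_{s}}-\mathds 1_{\theta\le \bar\lambda^{\Delta}_{s}}\bigr)\,\tilde P(\dx s,\dx \theta,\dx y).
\end{equation*}
Doob's $L^{2}$ maximal inequality and Jensen give $\E[\max_{k\le M}|D_{k}|]\le(\E[\max_{k\le M}|D_{k}|^{2}])^{1/2}\le 2\,(\E[D_{M}^{2}])^{1/2}$, so it remains to control $\E[D_{M}^{2}]$. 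By the isometry for $\tilde P$-integrals, together with $|\mathds 1_{\theta\le a}-\mathds 1_{\theta\le b}|^{2}=|\mathds 1_{\theta\le a}-\mathds 1_{\theta\le b}|$ and $\int_{\R_{+}}|\mathds 1_{\theta\le a}-\mathds 1_{\theta\le b}|\dx \theta=|a-b|$,
\begin{equation*}
\E[D_{M}^{2}]=\E\!\int_{(0,T]\times\R_{+}\times\R} y^{2}\,\bigl|\mathds 1_{\theta\le \lambda_{s}}-\mathds 1_{\theta\le \bar\lambda^{\Delta}_{s}}\bigr|\,\dx s\,\dx \theta\,\nu(\dx y)=\E[Y^{2}]\int_{0}^{T}\E\bigl|\lambda_{s}-\bar\lambda^{\Delta}_{s}\bigr|\dx s.
\end{equation*}

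It thus suffices to bound $\E|\lambda_{s}-\bar\lambda^{\Delta}_{s}|$. For $s\in((i-1)\Delta,i\Delta]$ we have $\bar\lambda^{\Delta}_{s}=\lambda^{\Delta}_{i\Delta}$ with $(i\Delta)_{\Delta}=i\Delta$, so Lemma \ref{lmm:comp-intens} gives $\E|\lambda_{i\Delta}-\lambda^{\Delta}_{i\Delta}|\le KC_{R}(h,\Delta)C_{S}(h,\Delta)$; moreover the forward modulus of continuity $\E|\lambda_{s}-\lambda_{i\Delta}|$, over the gap $i\Delta-s\in[0,\Delta]$, is bounded by $KC_{R}(h,\Delta)$ by the very computation of Lemma \ref{lmm:delta} (splitting $\int_{s}^{i\Delta}h(i\Delta-r)\dx\xi_{r}$ and $\int_{0}^{s}(h(i\Delta-r)-h(s-r))\dx\xi_{r}$, then using $\dx\E\xi_{r}=\E b(Y)\,\E\lambda_{r}\dx r$ and $\E\lambda_{r}\le K$). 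Absorbing the bounded factor $C_{S}(h,\Delta)$ into $K$, the triangle inequality gives $\E|\lambda_{s}-\bar\lambda^{\Delta}_{s}|\le KC_{R}(h,\Delta)$ uniformly in $s$, whence $\int_{0}^{T}\E|\lambda_{s}-\bar\lambda^{\Delta}_{s}|\dx s\le KTC_{R}(h,\Delta)$; combining, $\E[D_{M}^{2}]\le KTC_{R}(h,\Delta)$ and therefore $\E[\max_{k}|D_{k}|]\le K\sqrt{TC_{R}(h,\Delta)}$.

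The main obstacle I expect is the bookkeeping around the predictable version $\bar\lambda^{\Delta}$: on the bin $((i-1)\Delta,i\Delta]$ the thinning indicator uses the \emph{forward} grid value $\lambda^{\Delta}_{i\Delta}$, not $\lambda^{\Delta}_{s}=l^{\Delta}_{i-1}$, so Lemma \ref{lmm:comp-intens} cannot be quoted verbatim and one must compare $\lambda_{s}$ with $\lambda_{i\Delta}$ via the forward analogue of Lemma \ref{lmm:delta}. The second delicate point is justifying the isometry, namely that $\mathds 1_{\theta\le \lambda_{s}}-\mathds 1_{\theta\le \bar\lambda^{\Delta}_{s}}$ is a genuinely predictable and square-integrable integrand, so that the $\tilde P$-integral is a true $L^{2}$ martingale rather than merely a local one; this is precisely where the finite second moment of $\nu$ enters.
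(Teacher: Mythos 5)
Your proof is correct and follows essentially the same route as the paper: represent $\Xi_k-\Xi^\Delta_k$ as a single stochastic integral against the compensated measure, apply Jensen plus Doob's $L^2$ maximal inequality to reduce to the terminal second moment, compute it by the isometry to get $\E[Y^2]\int_0^T\E|\lambda_s-\bar\lambda^\Delta_s|\,\dx s$, and bound the integrand by adding and subtracting $\lambda_{i\Delta}$, using the forward-projection variant of Lemma \ref{lmm:delta} for one piece and Lemma \ref{lmm:comp-intens} for the other. The two "delicate points" you flag (the forward grid value in the thinning indicator, and predictability/square-integrability of the integrand) are exactly the ones the paper's proof handles, in the same way.
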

The proof of this Proposition can be found in Section \ref{sec:appendix}. \\
We finally give an upper bound on the Skorokhod distance between the continuous-time Hawkes risk and its discrete-time counterpart, in case the jump rate $\psi$ is bounded. 
\begin{Theorem}
    \label{thm:skorokhod}
    Suppose that Assumptions \ref{ass:stability} and \ref{ass:limit} or \ref{ass:stability_discrete} are in force. Assume furthermore that $\psi$ is bounded and  that $\nu$ has a finite second moment.There exists a positive constant $K$ that does not depend on $T$ and $\Delta$ such that 
    \begin{align*}
    \E d_S(R,R^\Delta) &\leq K \left( \Delta (1+T)(1+ \|\psi\|_{\infty}) + \sqrt{T C_R(h, \Delta) } +T C_R(h, \Delta)\right).
    \end{align*}
   {Moreover, if $h$ is of finite $p$ variations for $p\geq 1$, then Assumption \ref{ass:limit} automatically holds and 
    \begin{align*}
        \E d_S&(R,R^\Delta)\\ &\leq K \left( \Delta (1+T)(1+ \|\psi\|_{\infty}+\|h\|_{\infty}) + \sqrt{  \|h\|_{p-var}T^{\frac{2p-1}{p}}\Delta^{\frac{1}{p}}  + T\Delta \|h\|_{\infty} } + \|h\|_{p-var}T^{\frac{2p-1}{p}}\Delta^{\frac{1}{p}} \right).
    \end{align*}}
\end{Theorem}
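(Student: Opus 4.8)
The plan is to start from the pathwise decomposition \eqref{ineq:modulus-b}, namely $d_S(R,R^\Delta)\le \Delta+\omega'_R(\Delta)+\max_{k}|R_{k\Delta}-R^\Delta_{k\Delta}|$, take expectations, and estimate the three resulting contributions separately. The term $\Delta$ is already explicit. For the last term I would insert the martingale decomposition of Proposition \ref{prop:martingale}, writing $R_{k\Delta}-R^\Delta_{k\Delta}=(\Xi_k-\Xi^\Delta_k)+\E[Y]\bigl(\int_0^{k\Delta}\lambda_s\,\dx s-\sum_{i=1}^k\lambda^\Delta_{i\Delta}\Delta\bigr)$ and bounding the maximum of each piece. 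The martingale piece is handled directly by Proposition \ref{prop:martingale}, which gives $\E[\max_k|\Xi_k-\Xi^\Delta_k|]\le K\sqrt{TC_R(h,\Delta)}$; this is where the finite second moment of $\nu$ is consumed and where the $\sqrt{TC_R}$ term originates.

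For the compensator piece I would bound $\max_k\bigl|\int_0^{k\Delta}\lambda_s\,\dx s-\sum_{i=1}^k\lambda^\Delta_{i\Delta}\Delta\bigr|\le \sum_{i=1}^M\int_{(i-1)\Delta}^{i\Delta}|\lambda_s-\lambda^\Delta_{i\Delta}|\,\dx s$ and then split, for $s$ in the $i$-th bin, $\E|\lambda_s-\lambda^\Delta_{i\Delta}|\le \E|\lambda_s-\lambda_{i\Delta}|+\E|\lambda_{i\Delta}-\lambda^\Delta_{i\Delta}|$. Since $\lambda^\Delta_{i\Delta}=l^\Delta_i$ and $(i\Delta)_\Delta=i\Delta$, the second summand is exactly controlled by Lemma \ref{lmm:comp-intens}, while the first, being the oscillation of $\lambda$ over a window of length at most $\Delta$, is controlled by Lemma \ref{lmm:delta} after one further triangle inequality through $\lambda_{(i-1)\Delta}$ (the intensity is a.s.\ continuous at grid points, so comparing to the right endpoint is legitimate). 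Integrating over each bin and summing the $M=T/\Delta$ bins turns the factor $\Delta$ into $M\Delta=T$, which produces the $TC_R(h,\Delta)$ term after absorbing the bounded factor $C_S(h,\Delta)$ into $K$.

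The \emph{main obstacle} is the estimate of $\E[\omega'_R(\Delta)]$. The idea is that a $\Delta$-sparse partition whose break points are placed between the maximal clusters of jump times (a cluster being a chain of jumps each within $\Delta$ of the next) makes the oscillation inside each cell equal to the sum of the jump amplitudes of the cluster it contains, and isolated jumps sit at cell boundaries and contribute nothing; hence $\omega'_R(\Delta)\le \sum_{j}|Y_j|\mathds 1_{B_j}$, where $B_j$ is the event that $\tau_j$ has another jump of $R$ within distance $\Delta$. Taking expectations and using that $\psi$ is bounded, so that $\lambda_s\le\|\psi\|_\infty$ and two accepted points fall within a window of length $\Delta$ with probability of order $\Delta\|\psi\|_\infty$, together with the finiteness of the moments of $\nu$, I would obtain $\E[\omega'_R(\Delta)]\le K\Delta(1+T)(1+\|\psi\|_\infty)$. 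Combining this with the two previous estimates yields the first displayed inequality; the boundary effects at $0$ and $T$ only affect lower-order terms already absorbed in the $\Delta(1+T)$ factor.

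Finally, the $p$-variation statement requires no new probabilistic input: I would substitute the estimate $C_R(h,\Delta)\le K[\|h\|_{p-var}T^{(p-1)/p}\Delta^{1/p}+\Delta\|h\|_\infty]$ of Corollary \ref{cor:sobolev-p-var} into the general bound, observe that Assumption \ref{ass:limit} is then automatic, and absorb the residual term $T\Delta\|h\|_\infty\le \Delta(1+T)\|h\|_\infty$ coming from $TC_R(h,\Delta)$ into the first factor, which upgrades $(1+\|\psi\|_\infty)$ to $(1+\|\psi\|_\infty+\|h\|_\infty)$ and leaves the $\sqrt{TC_R}$ term in the stated form.
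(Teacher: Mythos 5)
Your proof follows the paper's own skeleton almost step for step: the starting point \eqref{ineq:modulus-b}; the split of $\max_k|R_{k\Delta}-R^\Delta_{k\Delta}|$ into the martingale part $\Xi_k-\Xi^\Delta_k$ (handled by Proposition \ref{prop:martingale}, which is indeed where the second moment of $\nu$ is used and where $\sqrt{TC_R(h,\Delta)}$ originates) and the compensator part, bounded bin by bin via $\E|\lambda_s-\lambda^\Delta_{i\Delta}|\le \E|\lambda_s-\lambda_{i\Delta}|+\E|\lambda_{i\Delta}-\lambda^\Delta_{i\Delta}|$ with the upward-projected variant of Lemma \ref{lmm:delta} and Lemma \ref{lmm:comp-intens} --- exactly the splitting the paper carries out (tersely in the theorem's proof, in full in the proof of Proposition \ref{prop:martingale}); and the $p$-variation statement via Lemma \ref{lmn-p-var-majo}/Corollary \ref{cor:sobolev-p-var} with absorption of $T\Delta\|h\|_\infty$ into the first factor, again as in the paper. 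The one genuine divergence is the term $\E\omega'_R(\Delta)$. The paper first dominates $R$ by the compound Poisson process $\Pi_t=\int_{(0,t]\times\R_+\times\R}|y|\mathds 1_{\theta\le\|\psi\|_\infty}P(\dx s,\dx\theta,\dx y)$ thinned from the same measure $P$, then computes $\E\omega'_\Pi(\Delta)$ in Lemma \ref{lmm:module_poisson} by conditioning on the number of points and using order statistics of uniforms; you instead propose the pathwise cluster bound $\omega'_R(\Delta)\le\sum_j|Y_j|\mathds 1_{B_j}$ plus boundary corrections, followed by a neighbour-counting estimate. That is a viable, arguably more transparent, alternative to the order-statistics computation, and your partition (break points at the first jump of each maximal cluster) can be made rigorous.

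Two caveats on that alternative. First, for the Hawkes process the mark $Y_j$ and the event $B_j$ are \emph{not} independent (marks feed back into the intensity through $b$), so you cannot factorize $\E[|Y_j|\mathds 1_{B_j}]$ directly: you must first pass to the dominating process $\Pi$, whose marks are independent of its points --- i.e.\ you still need the paper's domination step, which your phrase ``accepted points'' only leaves implicit. Second, carried out honestly (expected number of $\Pi$-points of order $T\|\psi\|_\infty$, each having a neighbour within $\Delta$ with probability of order $\Delta\|\psi\|_\infty$), your argument yields $\E\omega'_R(\Delta)\le K\E|Y|\,\Delta\|\psi\|_\infty(1+T\|\psi\|_\infty)$, which is \emph{quadratic} in $\|\psi\|_\infty$, not the linear factor $\Delta(1+T)(1+\|\psi\|_\infty)$ that you assert. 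You are in good company: the paper's own Lemma \ref{lmm:module_poisson} gives precisely $2\E|Y_1|\|\psi\|_\infty\Delta(1+2\|\psi\|_\infty T)$, and the same silent replacement of quadratic by linear occurs when the paper deduces the theorem from it --- a point that matters downstream, since the proof of Theorem \ref{thm:skorokhod-nb} applies this theorem with $\|\psi^C\|_\infty=C=\sqrt{T/\Delta}\to\infty$ and genuinely needs the linear form. So this is not a defect of your proof relative to the paper's, but be aware that you reproduce this discrepancy rather than repair it.
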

\begin{proof}
        Taking the expected value of inequality \eqref{ineq:modulus-b} we have that 
    \begin{align*}
        \E d_S(R,R^\Delta) &\leq \Delta + \E \omega '_R (\Delta)+  \E \max_{k=1,\cdots,M} |R_{k\Delta}-R^\Delta_{k\Delta}| \\
        & \leq \Delta + \E \omega '_R (\Delta) + \E \max_{k=1,\cdots,M} |\Xi_k-\Xi^\Delta_{k}| + \E  \max_{k=1,\cdots,M} \left | \int_0^{k\Delta}\lambda_s \dx s- \sum_{i=1}^k \lambda_{i\Delta}^\Delta \Delta  \right |\\
        &= \Delta+ A_1 +A_2 +A_3.
    \end{align*}
 The difference between the values of the discrete time and continuous time processes have been separated into a martingale part and a compensator part. \\
    
    The martingale term $A_2$ has already been dealt with in Proposition \ref{prop:martingale}. When it comes to $A_3$ we simply write 
    \begin{align*}
         \left | \int_0^{k\Delta}\lambda_s \dx s- \sum_{i=1}^k \lambda_{i\Delta}^\Delta \Delta  \right |& =\left | \sum _{i=1}^k \int_{(i-1)\Delta}^{i\Delta} \lambda_ s - \lambda^\Delta_{i\Delta} \dx s\right |\\
        & \leq \sum _{i=1}^M  \int_{(i-1)\Delta}^{i\Delta}  |\lambda_s - \lambda_{i\Delta}^\Delta| \dx s\\
        &\leq K T C_R(h,\Delta)
    \end {align*}
   using Inequality \eqref{ineq:lambda_discret-R}.
\\
The last term is $A_1$. Since the jump rate is bounded by $\|\psi\|_\infty$, the  compound Poisson process 
$$\Pi_t= \int_{(0,t]\times \R_+ \times \R}|y| \mathds 1_{\theta \leq \|\psi\|_\infty} P (\dx s, \dx \theta, \dx y)$$
dominates the process $R$. That is, for any $0\leq a \leq b \leq T$, $\Pi_b-\Pi_a \geq \left|R_b-R_a\right|.$ 
The problem now is to determine the behaviour of the modulus of continuity of a compound Poisson process of intensity $\|\psi\|_\infty $, which is solved in Lemma \ref{lmm:module_poisson}. 
\end{proof}
{The case of a bounded jump rate is quite restrictive, for instance the results of Theorem \ref{thm:skorokhod} cannot be applied to the standard linear Hawkes process. We hence give a generalisation to unbounded jump rates in the following theorem.} 
\begin{Theorem}
    \label{thm:skorokhod-nb}
    Suppose that Assumptions \ref{ass:stability} and \ref{ass:limit} or \ref{ass:stability_discrete} are in force. Assume that $h$ is bounded and that $\nu$ has a finite second moment. There exists a positive constant $K$ that does not depend on $T$ and $\Delta$ such that 
    
    \begin{align*}
    \E d_S(R,R^\Delta) &\leq K \left( \sqrt{\Delta} (1+T^{\frac{3}{2}})+ \sqrt{T C_R(h, \Delta) } +T C_R(h, \Delta)\right).
    \end{align*}
    {Moreover, if $h$ is of finite $p$ variations for $p\geq 1$, then Assumption \ref{ass:limit} automatically holds and 
    \begin{align*}
        \E d_S(R,R^\Delta) &\leq K \left( \sqrt{\Delta} (1+T^{\frac{3}{2}})+T\Delta + \sqrt{  \|h\|_{p-var}T^{\frac{2p-1}{p}}\Delta^{\frac{1}{p}}  + T\Delta \|h\|_{\infty} } + \|h\|_{p-var}T^{\frac{2p-1}{p}}\Delta^{\frac{1}{p}} \right).
    \end{align*}}
\end{Theorem}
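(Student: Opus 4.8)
The plan is to follow the very same decomposition used in the proof of Theorem \ref{thm:skorokhod}, since the only place where boundedness of $\psi$ entered there was the control of the modulus of continuity. Starting from inequality \eqref{ineq:modulus-b} and taking expectations, I would insert the martingale/compensator splitting of Proposition \ref{prop:martingale} to write
\begin{align*}
\E d_S(R,R^\Delta) \leq \Delta + A_1 + A_2 + A_3,
\end{align*}
with $A_1=\E\,\omega'_R(\Delta)$, $A_2=\E\max_{k}|\Xi_k-\Xi^\Delta_k|$ and $A_3=\E\max_k|\int_0^{k\Delta}\lambda_s\,\dx s-\sum_{i=1}^k\lambda^\Delta_{i\Delta}\Delta|$. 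The terms $A_2$ and $A_3$ are insensitive to whether $\psi$ is bounded: $A_2\leq K\sqrt{T\,C_R(h,\Delta)}$ is exactly Proposition \ref{prop:martingale}, and $A_3\leq K\,T\,C_R(h,\Delta)$ follows term-by-term from the intensity estimate \eqref{ineq:lambda_discret-R}, precisely as in Theorem \ref{thm:skorokhod}. Thus everything reduces to re-estimating $A_1$ under the new hypothesis.

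The whole novelty is therefore to bound $\E\,\omega'_R(\Delta)$ when $\psi$ may be unbounded but $h$ is bounded. The domination of $R$ by a compound Poisson process of rate $\|\psi\|_\infty$ (the device behind Lemma \ref{lmm:module_poisson}) is no longer available, so I would instead exploit boundedness of $h$ to gain integrability of the intensity: since $b\geq 0$, the auxiliary process $\xi$ is non-decreasing and $|\int_0^{t-}h(t-s)\,\dx\xi_s|\leq\|h\|_\infty\,\xi_{t-}$, whence $\lambda_t\leq\psi(0)+L\|h\|_\infty\,\xi_{t-}$. Combined with $\int_0^T h^2\leq\|h\|_\infty\|h\|_1<\infty$ and the assumed finite second moment of $\nu$, this yields finite second moments for $\lambda$ and for the increments of the total-variation process $V_t=\int_{(0,t]\times\R_+\times\R}|y|\mathds 1_{\theta\leq\lambda_s}P(\dx s,\dx\theta,\dx y)$, with the $T$-dependence tracked from Lemma \ref{lmm:intensite_bornee}. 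The key observation about $\omega'$ is that, $R$ being piecewise constant, any accepted point isolated within a $\Delta$-window can be placed at a partition boundary and contributes nothing; only windows carrying at least two accepted points charge the modulus. I would therefore fix a $\Delta$-sparse grid of mesh $2\Delta$, bound $\omega'_R(\Delta)$ by the largest increment of $V$ over a window containing $\geq 2$ points, and pass to an $L^2$ estimate
\begin{align*}
\E\,\omega'_R(\Delta)\leq\big(\E\,\omega'_R(\Delta)^2\big)^{1/2}\leq\Big(\sum_{\text{windows}}\E\big[(V_{s_{j+1}}-V_{s_{j-1}})^2\,\mathds 1\{\geq 2\text{ points}\}\big]\Big)^{1/2}.
\end{align*}
Each summand carries a factor $\Delta^2$ coming from the probability of a double acceptance in a window of size $2\Delta$, together with second-moment factors of the intensity that grow with time; summing the $O(T/\Delta)$ windows with the moment bounds above then produces the announced $\sqrt\Delta\,(1+T^{3/2})$ rate, with no dependence on $\|\psi\|_\infty$.

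The main obstacle is precisely this modulus estimate: one must quantify the clustering of accepted points of a self-exciting process whose intensity is endogenous, so the double-acceptance probability in a window cannot be read off from an exogenous rate but must be controlled through the predictable intensity and its second moment, and the bookkeeping of the powers of $T$ (arising from the $t$-dependence of $\E[\lambda_t^2]$ under mere boundedness of $h$) has to be done carefully. Once the general bound $\E d_S(R,R^\Delta)\leq K(\sqrt\Delta(1+T^{3/2})+\sqrt{T\,C_R(h,\Delta)}+T\,C_R(h,\Delta))$ is established, the $p$-variation statement follows by substituting the estimate $C_R(h,\Delta)\leq K[\|h\|_{p\text{-var}}T^{(p-1)/p}\Delta^{1/p}+\Delta\|h\|_\infty]$ of Corollary \ref{cor:sobolev-p-var} into $T\,C_R$ and $\sqrt{T\,C_R}$, recalling that finite $p$-variation forces Assumption \ref{ass:limit}, so that Assumption \ref{ass:stability_discrete} is available for $\Delta$ small enough.
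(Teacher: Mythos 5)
Your reduction to the terms $A_1,A_2,A_3$ is sound, and you correctly identify that boundedness of $\psi$ entered the proof of Theorem \ref{thm:skorokhod} only through $A_1=\E\,\omega'_R(\Delta)$. But your route is genuinely different from the paper's: the paper never estimates the modulus of continuity of a Hawkes risk with unbounded jump rate. Instead it truncates, setting $\psi^C=\psi\wedge C$, applies Theorem \ref{thm:skorokhod} to the truncated processes (so the compound-Poisson domination behind Lemma \ref{lmm:module_poisson} is available with rate $C$), and controls the event where the truncation matters by Cauchy--Schwarz and Markov: $\PP(\sup_{s\le T}\lambda_s>C)\le C^{-2}\E[\sup_{s\le T}\lambda_s^2]\le K(1+T^2)/C^2$. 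This is exactly where boundedness of $h$ is used in the paper, via $\lambda_s\le\psi(0)+L\|h\|_\infty\xi_T$ and Lemmas \ref{lem-moment-2} and \ref{lem-moment-2-discret}; the choice $C=\sqrt{T/\Delta}$ then balances $\Delta(1+T)C$ against $(1+T^2)/C$ and yields the $\sqrt{\Delta}(1+T^{3/2})$ term.

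Your alternative, a direct modulus estimate for the Hawkes risk, has a genuine gap at its central step. First, the combinatorial claim is misstated: with a \emph{fixed} grid of mesh $2\Delta$, a window containing exactly one accepted point does contribute its jump size to $\sup_{u,v\in[s_{j-1},s_{j+1})}|R_u-R_v|$ (the point is generically strictly inside the window), so the maximum over fixed windows does not tend to zero as $\Delta\to 0$; the statement that isolated points contribute nothing requires an \emph{adaptive} partition whose endpoints sit at jump times, and the resulting lemma (that $\omega'_R(\Delta)$ is dominated by sums over overlapping windows carrying at least two points) is asserted, not proved. Second, and more seriously, the claimed factor $\Delta^2$ per window does not follow from the second-moment information you invoke. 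Estimating the off-diagonal terms of $\E\bigl[V(I)^2\mathds 1_{\{N(I)\ge 2\}}\bigr]$ by Cauchy--Schwarz against $\E[\lambda_t^2]$ gives only $O(\Delta^{3/2})$ per window, since for instance $\E[N(I)(N(I)-1)]=2\,\E\int_I N([\inf I,t))\,\lambda_t\,\dx t\le 2\int_I(\E N(I)^2)^{1/2}(\E\lambda_t^2)^{1/2}\dx t=O(\Delta^{3/2})$; summing the $O(T/\Delta)$ windows then yields $\E[\omega'^2]=O(T\sqrt{\Delta})$, hence $\E\,\omega'=O(T^{1/2}\Delta^{1/4})$, which falls short of the announced $\sqrt{\Delta}$ rate. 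To recover $\Delta^2$ one must decorrelate the points of the window from $\lambda_t$, e.g.\ by conditioning on $\mathcal F_{\inf I}$ and using $\psi(a+b)\le\psi(a)+L|b|$ together with $|h|\le\|h\|_\infty$ to isolate the endogenous feedback inside the window; this is precisely the ``clustering of a self-exciting process'' difficulty you flag and leave open. As written, the proposal establishes the reduction but not the theorem, whereas the paper's truncation argument makes the unbounded case follow from the bounded one without ever facing this estimate.
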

\begin{proof}
 


Let $C$  a positive real number to be fixed later and 
$\psi^C= \psi \wedge C.$ We also denote by $ \lambda^C,~~N^C,~~R^C$ the intensity, Hawkes  and risk processes,  $ \lambda^{C,\Delta},~~N^{C,\Delta},~~R^{C,\Delta}$ the discrete  intensity and risk processes associated to $\psi^C.$
On the event $\Gamma_C=\{\omega, \sup_{t\leq T}   \lambda_t \leq C\},$  the process $(\lambda^C_t,N^C_t,R^C_t,~~t\in [0,T])$ and $(\lambda_t,N_t, R_t,~~t\in [0,T])$ coincide, and on the event $\Gamma_{C,\Delta}=\{\omega, \sup_{t\leq T}   \lambda_t^{\Delta} \leq C\},$  the processes $(\lambda^{C,\Delta}_t,N^{C,\Delta}_t,R^{C,\Delta}_t,~~t\in [0,T])$ and $(\lambda^{\Delta}_t,N_t^{\Delta}, R^{\Delta}_t,~~t\in [0,T])$ coincide.

On the event $\Omega \setminus \left(\Gamma_C\cap \Gamma_{C,\Delta}\right),$ we estimate
\begin{align*}
    d_S(R,R^{\Delta}) \leq \sup_{s\leq T}|R_s| + \sup_{s\leq T} |R_s^{\Delta}|
     \leq \sum_{k=1}^{N_T}|Y_k| + \sum_{k=1}^{N_T^{\Delta}}|Y_k|.
\end{align*}
Thus,
\begin{align*}
    \E d_S(R,R^{\Delta}) &= \E d_S(R,R^{\Delta})\mathds 1_{\Gamma_C \cap \Gamma_{C,\Delta}} + \E d_S(R,R^{\Delta}) (\mathds 1_{\Gamma_C^c \cup \Gamma_{C,\Delta}^c})\\
    &\leq  \E d_S(R^C,R^{C,\Delta})+ \E d_S(R,R^{\Delta}) (\mathds 1_{\Gamma_C^c } + \mathds 1_{ \Gamma_{C,\Delta}^c}). 
\end{align*}
Using Cauchy-Schwartz and Markov inequalities
\begin{align*}
    \E d_S(R,R^{\Delta}) & \leq  \E d_S( R^C,R^{C,\Delta }) + 
  \sqrt{{4} \E \left(\left[\sum_{k=1}^{N_T}|Y_k|\right]^2 +\left[\sum_{k=1}^{N_T^{\Delta}}|Y_k|\right]^2\right) \left({\mathbb P} (\Gamma_C^c ) +{\mathbb P} (\Gamma_{C,\Delta}^c )  \right)}\\ 
    &\leq \E d_S( R^C,R^{C,\Delta }) + 
  \frac{1}{C}  \sqrt{2 \E \left(\left[\sum_{k=1}^{N_T}|Y_k|\right]^2 +\left[\sum_{k=1}^{N_T^{\Delta}}|Y_k|\right]^2\right) \E \left(\sup_{s \leq T} \lambda_s^2 + \sup_{s\leq T}(\lambda_s^{\Delta})^2\right)}
    .
\end{align*}
Now, we bound each term under the square root.

First, since $N_t =\int_{(0,t] \times \R_+ \times \R} \mathds 1_{\theta \leq \lambda _s} P(\dx s, \dx \theta, \dx y)$
\begin{align*}
\E [ N_T^2] &\leq 2 \int_0^T  \E[\lambda_s ]\dx s + 2\E\left[\int_0^T \lambda_s \dx s \right]^2\\
&\leq 2T \sup_{s\leq T} \E[\lambda_s ] + 2T^2\sup_{s\leq T} \E[\lambda_s^2 ].
\end{align*}
According to lemmas  \ref{lmm:intensite_bornee} and \ref{lem-moment-2} 
\begin{align}\label{maj-moment-ordre2}
   \E [ N_T^2] \leq K(1+T^2) .
\end{align}
Second, since $N_t^{\Delta} =\int_{(0,t] \times \R_+ \times \R} \mathds 1_{\theta \leq \lambda _s^{\Delta}} P(\dx s, \dx \theta, \dx y)$
\begin{align*}
\E [ (N_T^{\Delta})^2] &\leq 2 \int_0^T  \E[\lambda_s ^{\Delta}]\dx s + 2\E\left[\int_0^T \lambda_s^{\Delta}\dx s \right]^2\\
&\leq 2T \sup_{s\leq T} \E[\lambda_s ^{\Delta}] + 2T^2\sup_{s\leq T} \E[(\lambda_s^{\Delta})^2 ].
\end{align*}
According to lemmas  \ref{lmm:rec} and \ref{lem-moment-2-discret}
\begin{align}\label{maj-moment-ordre2-delta}
   \E [ (N_T^\Delta)^2] \leq K(1+T^2) 
\end{align}
Third, recall that
\begin{align*}
    \lambda_t = \psi \left( \int_{0}^{t-} h(t-s) \dx \xi_s\right)
\end{align*}
Thus since $\psi$ is Lipschitz continuous
\begin{align*}
    \lambda_s
    &\leq \psi(0) +L \int_{0}^{t-} h(t-s) \dx \xi_s
\end{align*}
Since $\xi_t= \int_{(0,t] \times \R_+ \times \R} b(y)\mathds 1_{\theta \leq \lambda _s} P(\dx s, \dx \theta, \dx y)$ is an increasing  process and $h$ is bounded
\begin{align*}
    \lambda_s
    &\leq \psi(0) +L \|h\|_{\infty} \xi_T.
\end{align*}
Thus 
\begin{align*}
  \sup_{s\leq T}  \lambda_s^2
    &\leq  2 \psi(0) + 2L^2 \|h\|_{\infty}^2 \xi_T^2.
\end{align*}
Then, taking the expectation of each term,
\begin{align*}
     \E \left[\sup_{s \leq T} \lambda_s^2\right] \leq  2 \psi(0) + 2L^2 \|h\|_{\infty}^2 \E \left[\xi_T^2\right].
     \end{align*}
     Using the same line as the proof of  the estimation of $\E[N_T^2],$ \eqref{maj-moment-ordre2}, we have 
\begin{align}\label{maj-esp-sup-lambda}
     \E \left[\sup_{s \leq T} \lambda_s^2\right] \leq  K(1+T^2). 
     \end{align}
Fourth, recall that $\lambda_t^{\Delta}= \psi \left(\sum_{k=1}^{n_t-1}h(n_t-k) X_k^{\Delta}\right). $ Since $\psi$ is Lipschitz continuous,
     \begin{align*}
         \lambda_t^{\Delta} \leq \psi(0) + L\sum_{k=1}^{n_t-1} h(n_t-k) X_k^{\Delta}.
     \end{align*}
Using the fact that $h$ is bounded and $$X_n^{\Delta} =\int_{((n-1)\Delta,n\Delta]\times \R_+ \times \R} b(y) \mathds 1_{\theta \leq l^\Delta_n} P (\dx s , \dx \theta, \dx y ),$$
     \begin{align*}
         \sup_{s\leq T} \lambda_s^{\Delta} \leq \psi(0) + L\|h\|_{\infty}\int_{(0,T]\times \R_+ \times \R} b(y) \mathds 1_{\theta \leq \lambda^\Delta_s} P (\dx s , \dx \theta, \dx y ). 
     \end{align*}
     Using the same lines as the proof of estimation \eqref{maj-esp-sup-lambda} we obtain 
     \begin{align}\label{maj-esp-sup-lambda-d_*}
     \E \left[\sup_{s \leq T} (\lambda_s^{\Delta})^2\right] \leq  K(1+T^2).
     \end{align}
Then, for a universal constant $K$ 
\begin{align*}
    \E d_S(R,R^{\Delta}) &\leq \E d_S( R^C,R^{C,\Delta }) \\
    &+     K \frac{ 1+T^2}{C}.
\end{align*}
The quantity $\E d_S( R^C,R^{C,\Delta })$ is bounded using  Theorem \ref{thm:skorokhod} for $\psi^C$ whith $\|\psi^C\|_{\infty}\leq C$
\begin{align*}
    \E d_S(R,R^{\Delta}) &\leq  K \left( \Delta (1+T)(1+ C) + \sqrt{T C_R(h, \Delta) } +T C_R(h, \Delta)\right)\\
    &+    K \frac{ 1+T^2}{M}.
\end{align*}
Taking $C=\frac{\sqrt{T}}{\sqrt{\Delta}}$ we derive 
\begin{align*}
    \E d_S(R,R^{\Delta}) &\leq  K  \sqrt{\Delta} (1+T^{\frac{3}{2}})
    +K\left( \sqrt{T C_R(h, \Delta) } +T C_R(h, \Delta)\right)
\end{align*}
where $K$ is a constant independent of $T$ and $\Delta.$ 
\end{proof}
{\begin{Remark}
    If in the proof  of Theorem \ref{thm:skorokhod-nb}, we bound ${\mathbb P}(\Gamma_C^c)$ by
    $\frac{{\mathbb E}[ \sup_{s\leq T} \lambda_s]}{C}$ instead of  $\frac{{\mathbb E}[ \sup_{s\leq T} \lambda_s^2]}{C^2}$ we obtain
    \begin{align*}
    \E d_S (R,R^{C,\Delta}) & \leq K \left ( \Delta ^{1/3} T^{4/3} \sqrt{T C_R(h,\Delta)} + T C_R(h,\Delta)\right).
\end{align*}
The power of $T$ in the constant is smaller than in the bound given  in Theorem \ref{thm:skorokhod-nb} but the rate of convergence in $\Delta$  is smaller.
\end{Remark}}

\section{Conclusion}
\label{sec:conclusion}
Using coupling arguments based on thinning from a given Poisson measure, we derived explicit bounds on the distance between the continuous time embedding of non-linear Poisson autoregression (here referred to as the discrete time Hawkes process) and the standard continuous time Hawkes process, both in the Sobolev and the Skorokhod spaces. Our bounds yield a quantitative generalisation of the convergence result proven in \cite{mahmoud}. More specifically, the speed of convergence is given both in the time step of the discretisation $\Delta$ and the time horizon $T$.\\
An interesting development of the results shown in this paper is their extension to stochastic differential equations involving both a Brownian noise and Hawkes jumps in the Skorokhod metric. To the best of our knowledge, such results do not exist in the literature where jumps are supposed to follow a Poisson process \cite{BRUTILIBERATI2007982}. We thus consider that they would constitute a valuable contribution to the approximation of jump diffusion SDEs with potential numerical applications in many fields such as option pricing or neuro-sciences. 

\section{Proofs of the convergence results}

\label{sec:appendix}
\subsection{Proof of Lemma \ref{lmm:comp-intens}}
First, we compute the distance between the two intensities at a point of the grid.
For a given $u \in [0,T[$ we write the expressions of the intensities in order to find an upper bound of $  \E \left| \lambda_{(u)_\Delta}- \lambda^\Delta_u \right| $:
     
    \begin{align*}
        \E &\left| \lambda_{(u)_\Delta}- \lambda^\Delta_u \right|\\ =& \E \left|\psi \left(  \int_0^{(u)_\Delta}h\left((u)_\Delta-v\right) \dx \xi_v\right)-\psi \left(\sum_{k=1}^{n_u-1} h_{n_u-k}X^\Delta_k\right) \right|\\
        \leq & L \E \left|\int_{(t_{n_u-1},(u)_\Delta]\times \R_+^2}h((u)_\Delta-v)b(y)\mathds 1_{\theta \leq \lambda_{v}}P(\mathrm d v, \mathrm d \theta, \mathrm d y )\right |\\ 
        &+L\E\left|\sum_{k=1}^{n_u-1}\int_{(t_{k-1},t_k]\times \R_+^2} 
 \left(h((u)_\Delta-v)\mathds 1_{\theta \leq \lambda_{v}}-h_{n_u-k}\mathds 1_{\theta \leq \lambda^{\Delta}_{t_k}}\right) b(y)P(\mathrm d v, \mathrm d \theta, \mathrm d y ) \right|\\
 \leq & \E [b(Y)]L\int_{((u)_\Delta-\Delta,(u)_\Delta]}|h((u)_\Delta-v)|\E \lambda_v \dx v \\
 &+ L \sum_{k=1}^{n_u-1} \E \left |\int_{(t_{k-1},t_k]\times \R_+^2} 
 \left(h((u)_\Delta-v)\mathds 1_{\theta \leq \lambda_{v}}-h((u)_\Delta-v)\mathds 1_{\theta \leq \lambda^{\Delta}_{t_k}}\right) b(y)P(\mathrm d v, \mathrm d \theta, \mathrm d y ) \right |\\
 &+ L \sum_{k=1}^{n_u-1} \E \left |\int_{(t_{k-1},t_k]\times \R_+^2} 
 \left(h((u)_\Delta-v)\mathds 1_{\theta \leq \lambda^\Delta_{t_k}}-h_{n_u-k}\mathds 1_{\theta \leq \lambda^{\Delta}_{t_k}}\right) b(y)P(\mathrm d v, \mathrm d \theta, \mathrm d y ) \right |\\
  \leq & K\int_{((u)_\Delta-\Delta,(u)_\Delta]}|h((u)_\Delta-v)|\E \lambda_v \dx v \\
 &+ L \sum_{k=1}^{n_u-1} \int_{(t_{k-1},t_k]} 
 \left |h((u)_\Delta-v)\right | \E |\lambda_v -\lambda^\Delta_v|\mathrm d v \\
 &+ L \sum_{k=1}^{n_u-1} \int_{(t_{k-1},t_k]} 
 \left |h((u)_\Delta-v)-h_{n_u-k} \right | \E\lambda^{\Delta}_{v}\mathrm d v\\
    \end{align*}
    where $K$ is a constant independent of $T$ and $\Delta.$
    In order to apply the same induction used in the proof of Lemma \ref{lmm:rec}, the quantity in the integral $\int_{(0,(u)_\Delta-\Delta]}$ and the term to the left $\E \left| \lambda_{(u)_\Delta}- \lambda^\Delta_u \right|$ should coincide. That is why we take the projection of $\lambda_v$ on the discretisation grid. Therefore, using the upper bounds on $\E \lambda_v$ and $\E \lambda_v^\Delta$ (\textit{cf.} Lemmas \ref{lmm:intensite_bornee} and \ref{lmm:rec}) we have that 
    \begin{align*}
        \E &\left| \lambda_{(u)_\Delta}- \lambda^\Delta_u \right|\\ \leq & K\left( \frac{1}{1-\rho_{h}} \int_{((u)_\Delta-\Delta,(u)_\Delta]}|h((u)_\Delta-v)| \dx v \right) \\
        &+K\left ( \frac{1}{1-\rho_{h,\Delta}}\sum_{k=1}^{n_u-1} \int_{(t_{k-1},t_k]} 
 \left |h((u)_\Delta-v)-h_{n_u-k} \right | \mathrm d v\right)\\
 &+{\mathbb E}[b(Y)]L \sum_{k=1}^{n_u-1} \int_{(t_{k-1},t_k]} 
 \left |h((u)_\Delta-v)\right | \E |\lambda_{t_{k-1}}- \lambda^\Delta_{t_{k-1}}|\mathrm d v + \int_{(t_{k-1},t_k]} 
 \left |h((u)_\Delta-v)\right | \E |\lambda_{v}- \lambda_{(v)_\Delta}|\mathrm d v\\
 \leq & K\left( \frac{1}{1-\rho_h} \int_0^\Delta|h(y)|\dx y  \right) \\
 &+K\left ( \frac{1}{1-\rho_{h,\Delta}}\sum_{k=1}^{n_u-1} \int_{(t_{k-1},t_k]} 
 \left |h((u)_\Delta-v)-h_{n_u-k} \right | \mathrm d v\right)\\
 &+{\mathbb E}[b(Y)]L\sum_{k=1}^{n_u-1} \E |\lambda_{t_{k-1}}- \lambda^\Delta_{t_{k-1}}|\int_{(t_{k-1},t_k]} 
 \left |h((u)_\Delta-v)\right | \mathrm d v\\
 &+  \frac{K}{1-\rho_h}\left(  \int_0^\Delta|h(y)|\dx y + \sup_{\epsilon \in[0, \Delta]}\int_0^{T-\Delta}\left|h(y+\epsilon)-h(y)\right|\dx y\right)
    \end{align*}
    where we got the fourth line from Lemma \ref{lmm:delta}, and the first line using a linear time change. \\
    For the second line, we notice that $t_{k-1}=t_k-\Delta=(v)_\Delta$, hence:
    
    \begin{align*}
        \sum_{k=1}^{n_u-1} \int_{(t_{k-1},t_k]} \left |h((u)_\Delta-v)-h_{n_u-k} \right |\mathrm d v&= \sum_{k=1}^{n_u-1} \int_{(t_{k-1},t_k]} \left |h((u)_\Delta-v)-h((u)_\Delta-k\Delta) \right |\mathrm d v\\
        &=\sum_{k=1}^{n_u-1} \int_{(t_{k-1},t_k]} \left |h((u)_\Delta-v)-h((u)_\Delta-(v)_\Delta+\Delta) \right |\mathrm d v\\
        &=\int_0^{(u)_\Delta-\Delta}\left |h((u)_\Delta-v)-h((u)_\Delta-(v)_\Delta+\Delta) \right |\mathrm d v\\
    \end{align*}
    and using the change of variable $y=(u)_\Delta-v$ and noticing that $(u)_\Delta$ is already on the discretisation grid, we have that
    \begin{align*}
        \sum_{k=1}^{n_u-1} \int_{(t_{k-1},t_k]} \left |h((u)_\Delta-v)-h_{n_u-k} \right |\mathrm d v&=\int_\Delta^{(u)_\Delta} \left | h(y)-h\left( (u)_\Delta - ((u)_\Delta-y)_\Delta + \Delta \right) \right| \dx y \\
        &=\int_\Delta^{(u)_\Delta} \left | h(y)-h\left( (y)_\Delta + \Delta \right) \right| \dx y \\
        &\leq \int_0^{T-\Delta} \left | h(y)-h\left( (y)_\Delta + \Delta \right) \right| \dx y .
    \end{align*}


    For the third line  we notice that $\int_{(t_{k-1},t_k]} 
 \left |h((u)_\Delta-v)\right | \dx v $ depends only on the difference $n_u-k$, indeed 
 \begin{align*}
     \E [b(Y)]L\int_{t_{k-1}}^{t_k} \left |h((u)_\Delta-v)\right | \dx v & =\E [b(Y)]L\int_{(n_u-k)\Delta }^{(n_u-k+1)\Delta} |h(y)| \dx y\\
     &:=\eta^\Delta_{n_u-(k-1)}.
 \end{align*}
 By denoting $n_u=n$, $g_n=\E|\lambda_{n\Delta}-\lambda^\Delta_{n\Delta}|$ and $\tilde \eta ^\Delta_j=\eta^\Delta_j \mathds 1_{j\geq 2}$, we have that 
 $$g_n\leq C(h,\Delta)+\sum_{k=1}^{n-1} g_k \tilde \eta ^\Delta_{n-k},$$
 where \begin{align*}
     C(h,\Delta)=&K \frac{1}{1-\rho_{h}} \Big( 
     \int_0^\Delta|h(y)|\dx y   
    +\sup_{\epsilon \in[0, \Delta]}\int_0^{T-\Delta}\left|h(y+\epsilon)-h(y)\right|\dx y
     \Big)\\
      &+ K \frac{1}{1-\rho_{h,\Delta}}
     \int_0^{T-\Delta} \left | h(y)-h\left( (y)_\Delta+\Delta\right) \right| \dx y
     .
 \end{align*}

{Since $\tilde \eta^\Delta \in l_1$ and its sum is bounded by $\E[b(Y)]L\|h\|_1<1,$ we apply the same induction in the proof of Lemma \ref{lmm:rec} to obtain 
 $$\E \left| \lambda_{(u)_\Delta}- \lambda^\Delta_u \right| \leq \frac{C(h,\Delta)}{1-\rho_{h}}.$$
 Using Definition \ref{rem-dec-const*} we finally conclude that 
 $$\E \left| \lambda_{(u)_\Delta}- \lambda^\Delta_u \right| \leq K{C_R(h,\Delta)C_S(h,\Delta)}.$$}
\subsection{Proof of Proposition \ref{prop:main-R}}
\begin{proof}
    We first start by projecting both $s$ and $t$ on the time grid 
    \begin{align*}
        \E \left [|(R_t-R_s)-(R^\Delta_t-R^\Delta_s)| \right]=&\E \left [|(R_t-R_{(t)_\Delta})+(R_{(s)_\Delta}-R_s)+(R_{(t)_\Delta}-R_{(s)_\Delta})-(R^\Delta_t-R^\Delta_s)| \right]\\
        \leq& \E\left[|R_t-R_{(t)_\Delta}|\right]+ \E\left[|R_s-R_{(s)_\Delta}|\right]\\
        &+\E\left[ |(R_{(t)_\Delta}-R_{(s)_\Delta})-(R^\Delta_{(t)_\Delta}-R^\Delta_{(s)_\Delta})|\right].
    \end{align*}
    
We handle the first two terms as follows:
\begin{align*}
     \E\left[ | R_{t} - R_{(t)_{\Delta}} |\right] \leq & \E 
     \left[ \left | \int_{((t)_{\Delta},t] \times \R_+ \times \R} 
    \mathds 1_{\theta \leq \lambda_{u}} y P(\dx u,\dx \theta, \dx y) \right|\right] \\
     \leq& \int_{\R_+}|y| \nu(\dx y) \E \left[ \int_{((t)_{\Delta},t]}\lambda_{u} \dx u \right] \\
     \leq& {\E |Y|}\int_{((t)_{\Delta},t]} \E[\lambda_{u}] \dx u \\
     \leq& \frac{K\Delta}{1-\rho_{h}}.\\
\end{align*}

    Hence
    \begin{equation}
        \label{ineq:facile-R}
        \E\left[|R_t-R_{(t)_\Delta}|\right]+ \E\left[|R_s-R_{(s)_\Delta}|\right]\leq K\frac{\Delta}{1-\rho_{h}}.
    \end{equation}

    When it comes to the difference between the increments, we have that
    \begin{align*}
        \E\left[ |(R_{(t)_\Delta}-R_{(s)_\Delta})-(R^\Delta_{(t)_\Delta}-R^\Delta_{(s)_\Delta})|\right]\leq &\E\left[\left|\int_{((s)_\Delta,(t)_\Delta]\times \R_+\times \R}y \left(\mathds 1_{\theta \leq \lambda_u} -\mathds 1_{\theta \leq \lambda^\Delta_u} \right)P(\dx u, \dx \theta, \dx y)\right|\right]\\
        \leq  &\int_{\R_+} |y|\nu(\dx y)\E \left[ \int_{((s)_\Delta,(t)_\Delta]} \left| \lambda_u- \lambda^\Delta_u \right|\dx u \right]\\
        =&{\E |Y|}\int_{((s)_\Delta,(t)_\Delta]} \E\left[\left| \lambda_u- \lambda^\Delta_u \right|\right]\dx u\\
        \leq& K \left(\int_{((s)_\Delta,(t)_\Delta]} \E\left| \lambda_{(u)_\Delta}- \lambda^\Delta_u \right|+ \E |\lambda_{(u)_\Delta}-\lambda_u| \dx u\right),
    \end{align*}
    where the first term in the integral measures the discretization error on the grid and the second term the variation between a given point and its projection on the grid.\\
    The second term in the integral can be bounded using Lemma \ref{lmm:delta}:
    \begin{equation}
    \label{ineq:projection-R}
        \int_{(s)_\Delta}^{(t)_\Delta}\E |\lambda_{(u)_\Delta}-\lambda_u| \dx u \leq \frac{K}{1-\rho_{h}}\left(  \int_0^\Delta|h(y)|\dx y + \sup_{\epsilon \in[0, \Delta]}\int_0^{T-\Delta}\left|h(y+\epsilon)-h(y)\right|\dx y\right)\left( (t)_\Delta-(s)_\Delta\right).
    \end{equation}
    The first term is upper bounded using Lemma \ref{lmm:comp-intens} 
 \begin{equation}
 \label{ineq:lambda_discret-R*}
    \int_{(s)_\Delta}^{(t)_\Delta}  \E \left| \lambda_{(u)_\Delta}- \lambda^\Delta_u \right| \dx u\leq K C_R(h,\Delta) \left( (t)_\Delta-(s)_\Delta\right).
 \end{equation}

 Combining inequalities \eqref{ineq:facile-R}, \eqref{ineq:lambda_discret-R*} and \eqref{ineq:projection-R} yields the result. 
\end{proof}
\subsection{Proof of Proposition \ref{prop:martingale}}

    \begin{proof}
    Note that $(R_t- \E Y \int_0^t \lambda_s \dx s)_t$ is a continuous time $\mathcal F-$martingale and  $(\Xi_{k})_{k=0,\cdots, M}$ is a discrete time martingale. As for the discrete process, we have that 
\begin{align*}
    \Xi^\Delta_k &=R^{\Delta}_{k\Delta} - \E Y  \sum _{i=1}^k \lambda^\Delta_{i\Delta} \Delta \\
    &=\sum _{i=1}^k  \int_{((i-1)\Delta, i\Delta]}\int_{\R_+ \times \R}y\mathds 1_{\theta \leq \lambda^\Delta_{i\Delta}} P(\dx s, \dx \theta, \dx y) -\dx s \dx \theta \nu (\dx y).
\end{align*}
By construction we have that $ \int_{((i-1)\Delta, i\Delta]}\int_{\R_+ \times \R}y\mathds 1_{\theta \leq \lambda^\Delta_{i\Delta}} P(\dx s, \dx \theta, \dx y) -\dx s \dx \theta \nu (\dx y) \in \mathcal F_{i\Delta}$ and $\lambda^\Delta_{i\Delta} \in \mathcal F_{(i-1)\Delta},$ which yields the result. \\
For the upper bound, we start by combining the Cauchy Schwarz inequality with Doob's maximal inequality:
\begin{align*}
    \mathbb E \left[\max_{k=1,\cdots,M} |\Xi_{k}-\Xi^\Delta_{k}|\right ] &\leq \mathbb E \left[\max_{k=1,\cdots,M} |\Xi_{k}-\Xi^\Delta_{k}|^2\right ] ^{1/2}\\
    &\leq 2 \sqrt{\E [|\Xi_{M}-\Xi^\Delta_{M}|^2] }.
\end{align*}
Keeping in mind that $\Delta M=T$, we have that
\begin{align*}
    \Xi_M&-\Xi_M^\Delta\\=& \int _0^T \int_{\R_+\times\R} y\mathds 1_{\theta \leq \lambda_s } \left( P(\dx s, \dx \theta, \dx y) - \dx s \dx \theta \nu(\dx y)\right)  \\&- \sum _{i=1}^M  \int_{((i-1)\Delta, i\Delta]}\int_{\R_+ \times \R}y\mathds 1_{\theta \leq \lambda^\Delta_{i\Delta}} \left(P(\dx s, \dx \theta, \dx y) -\dx s \dx \theta \nu (\dx y)\right)\\
    =& \int _0^T \int_{\R_+\times \R}  \sum_{i=1}^M y\mathds 1_{(i-1)\Delta<s\leq i\Delta} (\mathds 1_{\theta \leq \lambda_s}- \mathds 1_{\theta \leq \lambda^\Delta_{i\Delta}}) (P(\dx s, \dx \theta, \dx y) -\dx s \dx \theta \nu(\dx y )).
\end{align*}
Therefore 
\begin{align*}
    \E \left[ |\Xi_{M}-\Xi^\Delta_{M}|^2\right] &= \E \left[[\Xi-\Xi^\Delta]_T\right]\\
    &=\E \int _0^T \int_{\R_+\times \R} \left( \sum_{i=1}^M y\mathds 1_{(i-1)\Delta<s\leq i\Delta} (\mathds 1_{\theta \leq \lambda_s}- \mathds 1_{\theta \leq \lambda^\Delta_{i\Delta}})\right)^2\dx s \dx \theta \nu (\dx y)\\
    &=\E \int _0^T \int_{\R_+ \times \R} \sum_{i=1}^M y^2\mathds 1^2_{(i-1)\Delta<s\leq i\Delta} (\mathds 1_{\theta \leq \lambda_s}- \mathds 1_{\theta \leq \lambda^\Delta_{i\Delta}})^2\dx s \dx \theta \nu (\dx y),\\
\end{align*}
because $\mathds 1_{(i-1)\Delta<s\leq i\Delta} \mathds 1_{(j-1)\Delta<s\leq j\Delta}=0$ if $i\neq j$. Thus 
\begin{align*}
     \E \left[ |\Xi_{M}-\Xi^\Delta_{M}|^2\right] &=\E [Y^2]\sum_{i=1}^M \int_{(i-1)\Delta}^{i\Delta} \E [|\lambda_s -\lambda^\Delta_{t_i}|]\dx t.
     \end{align*}
 By adding and subtracting $\lambda_{t_i}$ we have that, for all $s \in ((i-1)\Delta,i\Delta]$ 
    $$\E |\lambda_s-\lambda^\Delta_{\Delta i}|\leq \E |\lambda_s-\lambda_{ \Delta i}|+ \E |\lambda_{\Delta i}-\lambda^\Delta_{\Delta i}|.$$
    Following the proof of Lemma \ref{lmm:delta} and projecting on the nearest upper point of the grid (instead of the lower) we have the bound
    $$\E |\lambda_s-\lambda_{\Delta i}|\leq K\left(  \int_0^\Delta|h(y)|\dx y + \sup_{\epsilon \in[0, \Delta]}\int_0^{T-\Delta}\left|h(y+\epsilon)-h(y)\right|\dx y\right).$$
    Thus, using Inequality \eqref{ineq:lambda_discret-R} to bound $\E |\lambda_{\Delta i}-\lambda^\Delta_{\Delta i}|$ we obtain
    with the expression of $C_R(h,\Delta)$ given in Definition \ref{rem-dec-const*}
    \begin{align*}
    \E \left[ |\Xi_{M}-\Xi^\Delta_{M}|^2\right]
         &\leq K  C_R(h,\Delta) T 
\end{align*}
for a constant $K$ that does not depend on $T$ nor $\Delta$.
\end{proof}

    \section{Technical lemmata}
\label{sec:lemmata}
\subsection{Properties of the intensities }
    \begin{Lemma}
    \label{lmm:rec}
    Assume Assumption \ref{ass:stability_discrete} and $\Delta \in (0,T]$. Then, 
    $$\E \lambda_t^\Delta \leq \frac{\psi(0)}{1-\rho_{h,\Delta}}$$
    for any $t \in (0,T]$.
\end{Lemma}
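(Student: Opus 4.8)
The plan is to reduce the claim to a scalar recursion for the deterministic sequence $u_n := \E[l^\Delta_{n}]$, $n=0,\dots,M$, and then to close it by a finite induction. Since $\lambda^\Delta_t = l^\Delta_{n_t}$ with $n_t \le M$, it suffices to establish $u_n \le \psi(0)/(1-\rho_{h,\Delta})$ for every $n$.

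First I would exploit the two structural facts available: $\psi$ is nonnegative and $L$-Lipschitz, and the counts $X_k$ are nonnegative because $b\ge 0$. From $\psi(x)\le \psi(0)+L|x|$ together with $\bigl|\sum_{k=1}^{n-1}h_{n-k}X_k\bigr|\le \sum_{k=1}^{n-1}|h_{n-k}|X_k$ I obtain the pathwise bound $l^\Delta_n \le \psi(0) + L\sum_{k=1}^{n-1}|h_{n-k}|\,X_k$. I would then compute $\E[X_k]$ through the Poisson compensation formula: as $l^\Delta_k$ is $\mathcal F_{(k-1)\Delta}$-measurable, the integrand $b(y)\mathds 1_{\theta\le l^\Delta_k}$ is predictable, and hence
\[
\E[X_k]=\E\!\left[\int_{(k-1)\Delta}^{k\Delta}\!\int_{\R_+}\!\int_{\R} b(y)\mathds 1_{\theta\le l^\Delta_k}\,\dx s\,\dx\theta\,\nu(\dx y)\right]=\Delta\,\E[b(Y)]\,\E[l^\Delta_k],
\]
using $\int_{\R_+}\mathds 1_{\theta\le l^\Delta_k}\dx\theta=l^\Delta_k$ and $\int_\R b\,\dx\nu=\E[b(Y)]$. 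Taking expectations in the pathwise bound therefore yields the recursion $u_n\le \psi(0)+L\E[b(Y)]\Delta\sum_{k=1}^{n-1}|h_{n-k}|u_k$.

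Finally I would run the induction. Put $C:=\psi(0)/(1-\rho_{h,\Delta})$; the base cases $u_0=u_1=\psi(0)\le C$ hold because $\rho_{h,\Delta}\ge 0$. Assuming $u_k\le C$ for all $k<n$, I reindex $j=n-k$ so that $\sum_{k=1}^{n-1}|h_{n-k}|=\sum_{j=1}^{n-1}|h_j|\le\sum_{j=1}^{M}|h_j|$, whence
\[
u_n\le \psi(0)+C\,L\E[b(Y)]\Delta\sum_{j=1}^{M}|h_j|=\psi(0)+C\rho_{h,\Delta}=C,
\]
the last equality being precisely the definition of $C$. This closes the induction and gives the claim.

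The only genuinely delicate point is the computation of $\E[X_k]$: one must verify that $l^\Delta_k$ is predictable (that is, $\mathcal F_{(k-1)\Delta}$-measurable) so that the Poisson compensator may legitimately be taken, and that the factorisation of the triple integral is justified. Everything else is a routine discrete Gronwall-type induction, with the stability Assumption \ref{ass:stability_discrete} ($\rho_{h,\Delta}<1$) ensuring that $C$ is finite and that the inductive step is self-consistent.
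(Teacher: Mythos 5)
Your proof is correct and follows essentially the same route as the paper's: a pathwise bound from the Lipschitz property of $\psi$, the identity $\E[X^\Delta_k]=\Delta\,\E[b(Y)]\,\E[l^\Delta_k]$ (which the paper obtains by conditioning on $\lambda^\Delta_{k\Delta}$ rather than invoking the compensator explicitly), and a finite induction closed by bounding $\sum_{k=1}^{n-1}|h_{n-k}|$ by $\sum_{j=1}^{M}|h_j|$. No gaps to report.
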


\begin{proof}

    Since $\lambda^\Delta_t= \lambda^\Delta_{n_t \Delta}$, the inequality on the expected value of the discrete intensity will be proven by induction on $n$. We have that, for any $0\leq n \leq M$
    $$\lambda^\Delta_{n \Delta}=\psi \left(\sum_{k=1}^{n-1} h_{n_t-k}X^\Delta_k\right).$$
    In particular, for $n=0,1$
    $$\lambda^\Delta_{n\Delta} = \psi(0) \leq \frac{\psi(0)}{1-\rho_{h,\Delta}}.$$ Let $n\geq 2$ and assume that for all $j\leq n <M$
    $$\E \lambda^\Delta_{j\Delta} \leq \frac{\psi(0)}{1-\rho_{h,\Delta}}.$$
    Using the fact that $\psi$ is $L-$Lipschitz:
    \begin{align*}
        \lambda^\Delta_{(n+1)\Delta}& \leq \psi(0) + L \left|\sum_{k=1}^{n} h_{n+1-k}X^\Delta_k \right|\\
        & \leq \psi(0) + L \sum_{k=1}^{n}  |h|_{n+1-k}  X^\Delta_k.
    \end{align*}
    Since $\E X^\Delta_k= \E\left[ \E \left[ X^\Delta_k| \lambda^\Delta_{k\Delta}\right]\right ]= \E[b(Y)]\Delta \E \lambda^\Delta_{k\Delta}$ for any $k\geq 1$, we have that 
    \begin{align*}
        \E \lambda^\Delta_{(n+1)\Delta} &\leq \psi(0) + L \sum_{k=1}^{n}  |h|_{n+1-k}  \E X^\Delta_k\\
        &=\psi(0) + L \sum_{k=1}^{n}  |h|_{n+1-k}   \E[b(Y)]\Delta \E \lambda^\Delta_{k\Delta}\\
    \end{align*}
    which, according to the induction's hypothesis is bounded by:
    \begin{align*}
        \E \lambda^\Delta_{(n+1)\Delta} &\leq \psi(0) + \frac{L\psi(0)}{1-\E [b(Y)]L\sum_{k=1}^{M-1} |h|_k \Delta} \E [b(Y)]\sum_{k=1}^{n}  |h|_{n+1-k}   \Delta\\
        &\leq \psi(0) + \frac{L\psi(0)}{1-\E [b(Y)]L\sum_{k=1}^{M-1} |h|_k \Delta} \E [b(Y)]\sum_{k=1}^{M-1} |h|_{k}   \Delta
    \end{align*}
    which yields the result for $n+1\leq M$.

\end{proof}
\begin{Lemma}
\label{lmm:intensite_bornee}
     Suppose that Assumption \ref{ass:stability} is in force.
     Let $t \in \mathbb [0,T].$ We have that
    $$\E \lambda_t \leq  \frac{\psi(0)}{1-\rho_{h}}$$
\end{Lemma}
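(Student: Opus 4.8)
The plan is to mirror the induction used for the discrete intensity in Lemma~\ref{lmm:rec}, but since continuous time offers no "next step" to induct on, I would instead reduce the statement to a linear renewal inequality and close it with the convolution bound $\||h|^{*n}\|_{1}\le\|h\|_1^{n}$. First I would exploit that $\psi$ is $L$-Lipschitz, so $\psi(x)\le\psi(0)+L|x|$, to write
$$\lambda_t\le\psi(0)+L\int_{[0,t)\times\R_+\times\R}|h(t-s)|\mathds 1_{\theta\le\lambda_s}b(y)\,P(\dx s,\dx\theta,\dx y),$$
using that $b\ge0$ so the absolute value only hits the kernel. Taking expectations and using that the integrand $(s,\theta,y)\mapsto|h(t-s)|\mathds 1_{\theta\le\lambda_s}b(y)$ is predictable (because $\lambda$ is), the compensation formula replaces $P$ by its intensity $\dx s\,\dx\theta\,\nu(\dx y)$. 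Integrating $\theta$ over $\R_+$ turns $\mathds 1_{\theta\le\lambda_s}$ into $\lambda_s$, integrating $y$ against $\nu$ produces $\E[b(Y)]$, and Tonelli lets me swap expectation and the time integral. Writing $g(t):=\E\lambda_t$ and $a:=L\E[b(Y)]$, this yields the scalar inequality
$$g(t)\le\psi(0)+a\int_0^t|h(t-s)|\,g(s)\,\dx s.$$

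With $B:=\psi(0)/(1-\rho_h)$ the endgame is immediate once $C:=\sup_{t\le T}g(t)$ is known to be finite: bounding $\int_0^t|h(t-s)|g(s)\,\dx s\le C\|h\|_1$ gives $C\le\psi(0)+\rho_h C$, and $\rho_h<1$ forces $C\le B$, which is the claim. The only genuine difficulty is therefore the \emph{a priori finiteness} of $g$: since $h$ is merely $L^1$ there is no pointwise bound on the kernel and hence no classical Gr\"onwall argument available, so I cannot simply assume $C<\infty$.

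To remove this obstacle I would run the argument along the Picard approximations $(\lambda^{(n)})_{n\ge0}$ whose limit constructs the Hawkes intensity in the cited existence result of Definition~\ref{def:hawkes_continuous}, starting from $\lambda^{(0)}\equiv\psi(0)$. Each iterate satisfies the same inequality with $g^{(n+1)}(t)=\E\lambda^{(n+1)}_t$ on the left and $g^{(n)}$ under the integral. I would then show by induction on $n$ that $g^{(n)}(t)\le B$ for every $t\in[0,T]$: the bound $a\int_0^t|h(t-s)|\,\dx s\le a\|h\|_1=\rho_h$ makes the step close exactly, since $\psi(0)+\rho_h B=B$. Finally, passing to the limit with Fatou's lemma gives $g(t)=\E\lambda_t\le\liminf_n g^{(n)}(t)\le B$. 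The one point to check carefully is that the Picard iterates do converge to $\lambda$ (almost surely or in $L^1$) so that Fatou applies; this convergence is part of the pathwise fixed-point construction of \cite{BM}, which I would invoke. The convolution estimate $\||h|^{*n}\|_{1}\le\|h\|_1^n$ (Young's inequality) is the structural fact making the associated geometric series $\sum_n\rho_h^n$ summable, and is what is really behind the induction closing.
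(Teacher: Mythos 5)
Your proof is correct, and its first half coincides with the paper's: the Lipschitz bound $\psi(x)\le\psi(0)+L|x|$, the compensation formula for the Poisson measure, and the resulting renewal inequality $\E\lambda_t\le\psi(0)+L\E[b(Y)]\int_0^t|h(t-s)|\,\E\lambda_s\,\dx s$ appear essentially verbatim there. The two arguments part ways in how this inequality is closed. The paper borrows from \cite{BM} the a priori fact that $t\mapsto\E\lambda_t$ is finite and locally bounded, and then solves the inequality on the limit process itself: it convolves with the $k$-fold kernels $\left(L\E[b(Y)]|h|\right)^{(k)}$, telescopes, and kills the remainder $\left(L\E[b(Y)]|h|\right)^{(n+1)}\ast\E\lambda$ using $\rho_h^{n+1}\to 0$ --- a Neumann-series version of your convolution remark. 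You instead avoid assuming finiteness by propagating the exact bound $B=\psi(0)/(1-\rho_h)$ through the Picard scheme (the step closes because $\psi(0)+\rho_h B=B$) and passing to the limit by Fatou. Note that the paper's a priori input (local boundedness on the compact $[0,T]$ is precisely finiteness of $\sup_{t\le T}\E\lambda_t$) is exactly what your two-line supremum argument needs, so the paper's telescoping and your Picard detour are two ways of paying for the same missing fact, each charging it to \cite{BM} differently: the paper for boundedness of $\E\lambda$, you for convergence of the iterates. One small point on your side: the contraction in \cite{BM} gives convergence in $L^1(\dx\PP\times\dx t)$, hence almost sure convergence along a subsequence only for almost every $t$; to upgrade the resulting a.e.\ bound to every $t$, feed it back into the renewal inequality once more, which gives $\E\lambda_t\le\psi(0)+\rho_h B=B$ for all $t\in[0,T]$.
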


\begin{proof}
The fact that $\lambda_t$ is integrable and $t\mapsto {\mathbb E}(\lambda_t)$ is locally bounded follow from the same lines as the proof of Theorem 1 of \cite{BM}.
    Using the fact that $\psi$ is $L-$Lipschitz we have that
    \begin{align*}
        \lambda_t&=\psi \left( \int_0^{t-} h(t-s)\dx \xi_s\right)\\
        &\leq \psi(0) +L \left|\int_0^{t-} h(t-s)\dx \xi_s \right|\\
        &\leq \psi(0) +L \int_0^t |h|(t-s) \dx \xi_s.
    \end{align*}
  
    which yields by taking the expected value
    \begin{equation}
    \label{ineq:lambda}
        \E \lambda_{t}\leq\psi(0) + \int_0^t L|h|(t-s) \E [b(Y)] \E\lambda_s \dx s. 
    \end{equation}
    Let $*$ denote the convolution operator $(f*g)(t)=\int_0^t f(s)g(t-s) \dx s$ for any integrable $f$ and $g$. Let 
    \begin{align}\label{def-S}
    S=\sum_{n\geq 1} \left(\E [b(Y)]L|h|\right)^{(n)}
    \end{align}
    where $\left(\E [b(Y)]L|h|\right)^{(n)}=\E [b(Y)]L|h|\underbrace{*\cdots*}_{n \text{ times}}\E [b(Y)]L|h|$. The stability condition $\E [b(Y)]L\|h\|_1<1$ ensures that $S$ is well defined and that $\|S\|_1 = \E [b(Y)]L\|h\|_1(1-\E [b(Y)]L\|h\|_1)^{-1}.$\\
    For a given $k\geq 0$, convoling inequality \eqref{ineq:lambda} with $L|h|^{(k)}$ yields
    $$((\E [b(Y)]L|h|)^{(k)}*\E\lambda)(t)-((\E [b(Y)]L|h|)^{(k+1)}*\E\lambda)(t)\leq \left( (\E [b(Y)]L|h|)^{(k)} * \psi(0)\right)(t)$$
    which yields after telescoping 
    $$\E \lambda_t -((\E [b(Y)]L|h|)^{(n+1)}*\E\lambda)(t)\leq \psi(0)+\sum_{k=1}^n\left( (\E [b(Y)]L|h|)^{(k)} * \psi(0)\right)(t).$$
    Since $\E \lambda_t$ is finite,  $(\E [b(Y)]L|h|)^{(n+1)}*\E\lambda$ converges to zero as $n$ tends to infinity, and finally 
    \begin{align*}
        \E \lambda_t &\leq \psi(0)+\sum_{k=1}^{M}\left( (\E [b(Y)]L|h|)^{(k)} * \psi(0)\right)(t)\\
        &=\psi(0) \left( 1+\sum_{k=1}^{M} (\E [b(Y)]L|h|)^{(k)} * 1\right)\\
        &=\psi(0) \left( 1+\frac{\E [b(Y)]L\|h\|_1}{1-\E [b(Y)]L\|h\|_1}\right)
    \end{align*}
hence the result. 

\end{proof}

\begin{Lemma}
    \label{lem-moment-2-discret}
   Assume that $\rho_{h,\Delta}= L \E[b(Y)]\sum_{k=1}^M |h_k| \Delta<1$  and that $\E[b(Y)^2]<\infty.$ 
     Then,
     \begin{align*}
         \sup_{t \in [0,T]} {\mathbb E}[ (\lambda_t^\Delta)^2]\leq\left (\psi (0)^2 + L^2 \E [b^2(Y)] \frac{\psi(0)}{1-\rho_{h,\Delta}}\|h\|_{\Delta,2}^2 \right)  \frac{1}{(1-\rho_{h,\Delta})^2},
     \end{align*}
    where $\|h\|_{\Delta,2}:= \left ( \sum_{j=1}^M |h_{j}|^2  \Delta\right)^{1/2}.$
\end{Lemma}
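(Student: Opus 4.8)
The plan is to reduce the statement to a discrete recursion for the $L^2$-norms of the time series $(l_n^\Delta)$ and to solve that recursion by a fixed-point argument powered by the stability constant $\rho_{h,\Delta}<1$, in the same spirit as Lemma \ref{lmm:rec}. Since $\lambda_t^\Delta=l_{n_t}^\Delta$ is piecewise constant on the grid, it suffices to bound $\sup_{1\le n\le M}\E[(l_n^\Delta)^2]$. Using that $\psi$ is $L$-Lipschitz and that $X_k^\Delta\ge 0$ (because $b\ge 0$), I would start from the pointwise bound
$$l_n^\Delta \le \psi(0) + L\sum_{k=1}^{n-1}|h_{n-k}|\,X_k^\Delta.$$

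The second ingredient is the Poisson moment computation. Conditionally on $\mathcal F_{(k-1)\Delta}$ the variable $X_k^\Delta$ is compound Poisson with the predictable parameter $\Delta l_k^\Delta$, so that
$$\E[X_k^\Delta\mid \mathcal F_{(k-1)\Delta}] = \Delta\,\E[b(Y)]\,l_k^\Delta, \qquad \mathrm{Var}(X_k^\Delta\mid \mathcal F_{(k-1)\Delta}) = \Delta\,\E[b(Y)^2]\,l_k^\Delta.$$
Writing $M_k := X_k^\Delta - \Delta\E[b(Y)]l_k^\Delta$ for the compensated increment, the predictability of $l_k^\Delta$ with respect to $\mathcal F_{(k-1)\Delta}$ makes the $M_k$ orthogonal, namely $\E[M_jM_k]=0$ for $j\ne k$ and $\E[M_k^2]=\Delta\,\E[b(Y)^2]\,\E[l_k^\Delta]$.

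Working in $L^2$ and writing $s_n:=\|l_n^\Delta\|_2$, I would split the sum into its martingale part and its drift part and apply Minkowski's inequality to get
$$s_n \le \psi(0) + L\Big\|\sum_{k=1}^{n-1}|h_{n-k}|M_k\Big\|_2 + L\Delta\,\E[b(Y)]\sum_{k=1}^{n-1}|h_{n-k}|\,s_k.$$
By Pythagoras and the computation of $\E[M_k^2]$, the martingale term is controlled by the $\ell^2$-norm of the kernel,
$$\Big\|\sum_{k=1}^{n-1}|h_{n-k}|M_k\Big\|_2^2 = \sum_{k=1}^{n-1}|h_{n-k}|^2\,\E[M_k^2] \le \E[b(Y)^2]\Big(\sup_k \E[l_k^\Delta]\Big)\|h\|_{\Delta,2}^2,$$
and Lemma \ref{lmm:rec} bounds $\sup_k \E[l_k^\Delta]\le \psi(0)/(1-\rho_{h,\Delta})$. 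This is the step I expect to carry the essential information: it is the orthogonality of the compensated increments that produces the $\ell^2$-norm $\|h\|_{\Delta,2}$ rather than the cruder $\ell^1$-norm, and it relies crucially on $l_k^\Delta$ being known at time $(k-1)\Delta$.

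Finally, setting $a:=\psi(0) + L\sqrt{\E[b(Y)^2]\,\psi(0)/(1-\rho_{h,\Delta})}\;\|h\|_{\Delta,2}$ and bounding $\sum_{k=1}^{n-1}|h_{n-k}|s_k \le (\sup_k s_k)\sum_{j=1}^M|h_j|$, the previous display becomes $s_n \le a + \rho_{h,\Delta}\sup_k s_k$, because $L\Delta\E[b(Y)]\sum_{j=1}^M|h_j| = \rho_{h,\Delta}$. Taking the supremum over $n$ and using $\rho_{h,\Delta}<1$ yields $\sup_n s_n \le a/(1-\rho_{h,\Delta})$, and squaring gives a bound of the announced form, the two summands coming respectively from $\psi(0)$ and from the martingale variance term. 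The main obstacle is precisely the separation performed in the third step: the second moment of $\sum_k|h_{n-k}|X_k^\Delta$ mixes a genuinely additive variance contribution, which must be kept apart to obtain the factor $\|h\|_{\Delta,2}^2$, with a drift contribution that feeds the unknown $s_k$ back into the estimate; the fixed-point resolution of this self-referential recursion is only possible thanks to the discrete stability assumption $\rho_{h,\Delta}<1$.
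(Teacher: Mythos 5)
Your strategy shares the paper's essential ingredients --- compensate the increments $X_k^\Delta$ by their predictable means, use martingale orthogonality to make the $\ell^2$-norm $\|h\|_{\Delta,2}$ appear, and close the estimate via $\rho_{h,\Delta}<1$ --- but you organize it differently: you pass to $L^2$-norms immediately and solve a scalar fixed-point inequality for $s_n=\|l_n^\Delta\|_2$, whereas the paper first resolves the \emph{pathwise} convolution inequality $l_n^\Delta\le \tilde v_n+(a\ast l^\Delta)_n$ (with $a_n=L\Delta\E[b(Y)]|h_n|$ and $\tilde v_n=\psi(0)+L\sum_k|h_{n-k}|\tilde X_k$) by iterating and telescoping, getting $l_n^\Delta\le \tilde v_n+(A\ast\tilde v)_n$ with $A=\sum_{j\ge1}a^{(j)}$, and only then takes expectations of the products $\tilde v_k\tilde v_n$. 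Your route is shorter; the paper's pathwise resolution has the advantage of not needing any a priori integrability, while your fixed-point step ($\sup_n s_n\le a+\rho_{h,\Delta}\sup_n s_n$ implies $\sup_n s_n\le a/(1-\rho_{h,\Delta})$) is legitimate only once you know $\sup_{n\le M}s_n<\infty$; this does hold (finite grid, plus induction on $n$ using the conditional compound Poisson structure and $\E[b(Y)^2]<\infty$), but it should be said.

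The one genuine defect is your last step. Splitting off $\psi(0)$ by Minkowski gives $a=\psi(0)+L\sqrt{\E[b^2(Y)]\psi(0)/(1-\rho_{h,\Delta})}\,\|h\|_{\Delta,2}$, and squaring $a$ produces the cross term $2\psi(0)\cdot L\sqrt{\E[b^2(Y)]\psi(0)/(1-\rho_{h,\Delta})}\,\|h\|_{\Delta,2}$; so what you have actually proved is the bound with constant $a^2/(1-\rho_{h,\Delta})^2$, which strictly exceeds the stated constant $\bigl(\psi(0)^2+L^2\E[b^2(Y)]\frac{\psi(0)}{1-\rho_{h,\Delta}}\|h\|_{\Delta,2}^2\bigr)/(1-\rho_{h,\Delta})^2$ (by at most a factor $2$, but it does exceed it). The claim that squaring yields ``the two summands'' silently drops this cross term. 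The fix is one line: do not separate $\psi(0)$ from the martingale sum by the triangle inequality; since $\E[M_k]=0$, the constant $\psi(0)$ is orthogonal in $L^2$ to the centered sum, so
$$\Bigl\|\psi(0)+L\sum_{k=1}^{n-1}|h_{n-k}|M_k\Bigr\|_2^2=\psi(0)^2+L^2\sum_{k=1}^{n-1}|h_{n-k}|^2\,\E[M_k^2]\le \psi(0)^2+L^2\E[b^2(Y)]\frac{\psi(0)}{1-\rho_{h,\Delta}}\|h\|_{\Delta,2}^2=:B,$$
and then $s_n\le\sqrt B+\rho_{h,\Delta}\sup_k s_k$ yields exactly $\sup_n s_n^2\le B/(1-\rho_{h,\Delta})^2$, the stated bound. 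This is precisely the cancellation the paper exploits when it computes $\E[\tilde v_k\tilde v_n]=\psi(0)^2+L^2\sum_j|h_{n-j}||h_{k-j}|\Delta\E[b^2(Y)]\E[l_j^\Delta]$: the cross terms between $\psi(0)$ and the martingale part vanish in expectation.
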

\begin{proof}
     The process $\lambda^{\Delta}$ is piecewise constant. 
    Let us recall the definition of $\lambda_{\Delta n}^{\Delta}=l_n^{\Delta}$ given in Definition  \ref{def:hawkes_discrete}~:
    \begin{align*}
        l_n^{\Delta}= \psi\left( \sum_{k=1}^{n-1} h_{n-k} X_k \right).
    \end{align*}
    Since $\psi$ is $L$  Lipschitz continuous, we have
       \begin{align*}
        l_n^{\Delta}\leq \psi(0) + L\left( \sum_{k=1}^{n-1}|h_{n-k}| X_k \right).
    \end{align*}
     Let us introduce $\tilde{X}_k =X_k - \Delta {\mathbb E}(b(Y)) l_k^{\Delta}$, then 
      \begin{align*}
        l_n^{\Delta}&\leq \psi(0) + L\left( \sum_{k=1}^{n-1}|h_{n-k}| \tilde {X}_k \right) + \sum_{k=1}^{n-1}L \Delta {\mathbb E}(b(Y)) |h_{n-k}|l_k^\Delta \nonumber \\
        &=\tilde {v}_n +   \sum_{k=1}^{n-1}L \Delta {\mathbb E}(b(Y)) |h_{n-k}|l_k^\Delta,
    \end{align*}
    where $\tilde {v}_n := \psi(0) + L\left( \sum_{k=1}^{n-1}|h_{n-k}| \tilde {X}_k \right)$. Since $l^\Delta_n$ is nonnegative, we have that 
    \begin{align}
    \label{ineq:lem-2-discret}
        l_n^{\Delta} &\leq  \tilde {v}_n + \sum_{k=0}^{n}L \Delta {\mathbb E}(b(Y)) |h_{n-k}|l_k^\Delta \nonumber\\
        &=  \tilde {v}_n + \left (L \Delta {\mathbb E}(b(Y)) |h| \ast l^ \Delta \right)_n, 
    \end{align}
    $\ast$ here being the discrete convolution operator for sequences defined on $\mathbb N$ and that take the value $0$ for $n> M$. For a given $j \in \mathbb N$ we recursively define for any $n=0, \cdots, M $:
    \begin{equation*}
        \begin{cases}
        a_n^{(0)}&=\boldsymbol{1}_{n=0}\\
        a^{(1)}_n &= a_n = L \Delta \E \left [ b(Y) \right] |h_{n}| \\
        a^{(j)}_n &= (a^{(j-1)} \ast a)_n
        \end{cases}
    \end{equation*}

For a given integer $j$ we take the convolution of Inequality \eqref{ineq:lem-2-discret} (whose right hand side is nonnegative) with the nonnegative sequence $a^{(j)}$, yielding for any $n =0,\cdots,M$
\begin{align*}
    \left ( a^{(j)}\ast l^\Delta\right)_n \leq \left(a^{(j)}\ast \tilde v \right)_n + \left (  a^{(j+1)}\ast l^\Delta\right)_n
\end{align*}
and therefore by telescoping 
\begin{align*}
    l^\Delta _n -  \left (  a^{(j+1)}\ast l^\Delta\right)_n & \leq \sum _{i=0}^ {j+1} \left ( a^{(i)}\ast \tilde v \right)_n.
\end{align*}
Since $l^\Delta_n$ is almost surely finite for any $n=0, \cdots, M$ (in fact it has a finite first moment) and since $\|a\|_1 = \left \|L \Delta \E [b(Y)]|h| \right \|_1 < 1$, we have that 
\begin{align*}
 \left (  a^{(j+1)}\ast l^\Delta\right)_n & \leq \| a^{(j+1)}\ast l^\Delta\|_1 \\
 & \leq \|a\|_1^{j+1} \|l^\Delta\|_1  \xrightarrow[\enskip j \to +\infty\enskip]{} 0.
\end{align*}
Hence, for any $n=0, \cdots, M$
$$ l^\Delta _n  \leq \tilde {v}_n + \sum _{k=0}^n A_{n-k}  \tilde {v}_{k}, $$
where $A:=\sum_{j=1}^{+\infty} a^{(j)}$ is well defined because $\|a\|_1<1$ and satisfies $\|A\|_1 \leq \frac{\|a\|_1}{1-\|a\|_1}$. Taking the square yields 
\begin{align*}
 (l_n^\Delta)^2 &\leq (\tilde{v}_n)^2 + 2 \sum _{k=0}^n A_{n-k}  \tilde {v}_{k}\tilde{v}_n + \left ( \sum _{k=0}^n A_{n-k}  \tilde {v}_{k}\right)^2 \\
 &=(\tilde{v}_n)^2 + 2 \sum _{k=0}^n A_{n-k}  \tilde {v}_{k}\tilde{v}_n +  2\sum _{0\leq j < k \leq n} A_{n-k} A_{n-j}  \tilde {v}_{k} \tilde {v}_{j}  + \sum_{k=0}^n (A_{n-k})^2 (\tilde{v}_k)^2.\\
\end{align*}
Using the definition of $\tilde v$ and the fact that $\tilde X$ is a martingale increment sequence, we have for $k \leq n $
\begin{align*}
    \E \left [ \tilde {v}_k \tilde {v}_n\right] &= \psi (0)^2 + L^2 \sum_{j=1}^k |h_{n-j}| |h_{k-j}| \E [l^\Delta_j] \Delta \E [b^2(Y)] \\ 
    &\leq  \psi (0)^2 + L^2 \E [b^2(Y)] \frac{\psi(0)}{1-\rho_{h,\Delta}}\sum_{j=1}^k |h_{n-j}| |h_{k-j}|  \Delta \\
    & \leq \psi (0)^2 + L^2 \E [b^2(Y)] \frac{\psi(0)}{1-\rho_{h,\Delta}}\left(\sum_{j=1}^k |h_{k-j}|^2 \Delta  \right)^{1/2} \left ( \sum_{j=1}^n |h_{n-j}|^2  \Delta\right)^{1/2} \\ 
    & \leq \psi (0)^2 + L^2 \E [b^2(Y)] \frac{\psi(0)}{1-\rho_{h,\Delta}}\|h\|_{\Delta,2}^2, \\ 
\end{align*}
where $\|h\|_{\Delta,2}:= \left ( \sum_{j=1}^M |h_{j}|^2  \Delta\right)^{1/2}.$ Since $\|A\|_1 \leq \frac{\rho_{h,\Delta}}{1-\rho_{h,\Delta}}$ we have 
\begin{align*}
    \E \left [ (l^\Delta_n)^2 \right] & \leq \left (\psi (0)^2 + L^2 \E [b^2(Y)] \frac{\psi(0)}{1-\rho_{h,\Delta}}\|h\|_{\Delta,2}^2 \right)  \left ( 1+ 2 \frac{\rho_{h,\Delta}}{1-\rho_{h,\Delta}} +   \left( \frac{\rho_{h,\Delta}}{1-\rho_{h,\Delta}}\right)^2 \right)\\
    &=\left (\psi (0)^2 + L^2 \E [b^2(Y)] \frac{\psi(0)}{1-\rho_{h,\Delta}}\|h\|_{\Delta,2}^2 \right)  \left ( 1+ \frac{\rho_{h,\Delta}}{1-\rho_{h,\Delta}} \right)^2 \\
    & \leq \left (\psi (0)^2 + L^2 \E [b^2(Y)] \frac{\psi(0)}{1-\rho_{h,\Delta}}\|h\|_{\Delta,2}^2 \right)  \frac{1}{(1-\rho_{h,\Delta})^2} . \\
\end{align*}
 
\end{proof}
 \begin{Lemma}\label{lem-moment-2}
   Under Assumption \ref{ass:stability} and the $\E b(Y)^2 <\infty$
     \begin{align*}
         \sup_{t \in [0,T]} {\mathbb E}[ \lambda_t^2]\leq\left (\psi(0)^2 + L^2 \E[b(Y)^2] \frac{\psi(0)}{1-\rho_{h}}\|h\|^2_2 \right)\frac{1}{(1-\rho_{h})^2}.
     \end{align*}
 \end{Lemma}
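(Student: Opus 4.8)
The plan is to mirror the proof of Lemma \ref{lem-moment-2-discret}, replacing the discrete convolution by the continuous one and the martingale increment sequence $\tilde X$ by the compensated auxiliary process. First I would record that $M_t := \xi_t - \E[b(Y)]\int_0^t \lambda_s\,\dx s$ is an $\mathcal F$-martingale, since $\xi_t=\int_{(0,t]\times\R_+\times\R}b(y)\mathds 1_{\theta\leq\lambda_s}P(\dx s,\dx\theta,\dx y)$ has predictable compensator $\E[b(Y)]\int_0^t\lambda_s\,\dx s$. Starting from $\lambda_t=\psi(\int_0^{t-}h(t-s)\,\dx\xi_s)$ and the $L$-Lipschitz property, I obtain
\begin{align*}
    \lambda_t \leq \psi(0)+L\int_0^t |h|(t-s)\,\dx\xi_s = \tilde v_t + \big(\E[b(Y)]L|h|\ast\lambda\big)(t),
\end{align*}
where $\tilde v_t:=\psi(0)+L\int_0^t |h|(t-s)\,\dx M_s$ plays the role of the martingale part $\tilde v_n$ and $\ast$ is the continuous convolution.

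Next I would iterate this convolution inequality exactly as in Lemma \ref{lmm:intensite_bornee}: convolving with $(\E[b(Y)]L|h|)^{(k)}$ and telescoping, and using that $\E\lambda_t<+\infty$ (Lemma \ref{lmm:intensite_bornee}) together with $\rho_h=\E[b(Y)]L\|h\|_1<1$ to kill the remainder term $(\E[b(Y)]L|h|)^{(n+1)}\ast\lambda \to 0$. This yields the pointwise bound $\lambda_t \leq \tilde v_t + (S\ast\tilde v)(t)$, where $S=\sum_{n\geq 1}(\E[b(Y)]L|h|)^{(n)}$ is the kernel from \eqref{def-S}, well defined with $\|S\|_1=\rho_h/(1-\rho_h)$.

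The core computation is then the covariance of the martingale part. For $s\leq t$, by the isometry for compensated Poisson integrals (the integrand $\mathds 1_{\theta\leq\lambda_u}$ being predictable), I would compute
\begin{align*}
    \E[\tilde v_s\tilde v_t] = \psi(0)^2 + L^2\,\E[b(Y)^2]\int_0^{s} |h|(t-u)\,|h|(s-u)\,\E[\lambda_u]\,\dx u.
\end{align*}
Bounding $\E[\lambda_u]\leq \psi(0)/(1-\rho_h)$ via Lemma \ref{lmm:intensite_bornee} and applying Cauchy--Schwarz, $\int_0^{s}|h|(t-u)|h|(s-u)\,\dx u\leq\|h\|_2^2$, gives the uniform bound $\E[\tilde v_s\tilde v_t]\leq B:=\psi(0)^2+L^2\E[b(Y)^2]\frac{\psi(0)}{1-\rho_h}\|h\|_2^2$, the exact continuous analogue of the discrete estimate. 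Squaring $\lambda_t\leq \tilde v_t+(S\ast\tilde v)(t)$, taking expectations, and inserting $B$ together with $\|S\|_1=\rho_h/(1-\rho_h)$ yields
\begin{align*}
    \E[\lambda_t^2]\leq B\big(1+2\|S\|_1+\|S\|_1^2\big)=B\big(1+\|S\|_1\big)^2=\frac{B}{(1-\rho_h)^2},
\end{align*}
which is precisely the claim after taking the supremum in $t$.

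I expect the main obstacle to be the a priori justification that $\sup_{t\in[0,T]}\E[\lambda_t^2]$ is finite (local boundedness of the second moment) before deriving the sharp constant, so that the squaring and Fubini manipulations on $(S\ast\tilde v)(t)^2$ are licit; this is handled exactly as the integrability/local-boundedness step invoked at the start of the proof of Lemma \ref{lmm:intensite_bornee}, following the lines of \cite{BM}. The remaining care is in the isometry computation, where the cross terms vanish because $\mathds 1_{(i-1)\Delta<s\leq i\Delta}$ in the discrete case is here replaced by the orthogonality of the compensated measure over disjoint time sets, and in verifying the Neumann series convergence, both of which reduce to the condition $\rho_h<1$.
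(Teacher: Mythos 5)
Your proposal is correct and takes essentially the same route as the paper: your $\tilde v_t$ coincides exactly with the paper's martingale term $\tilde M_t$, and your three steps (the pathwise Neumann-series bound $\lambda_t \leq \tilde v_t + (S\ast\tilde v)(t)$, the isometry computation giving $\E[\tilde v_s\tilde v_t]\leq \psi(0)^2+L^2\E[b(Y)^2]\frac{\psi(0)}{1-\rho_{h}}\|h\|_2^2$, and the final assembly via $\|S\|_1=\rho_{h}/(1-\rho_{h})$) reproduce the paper's proof step for step. Your closing remark on the a priori finiteness needed to justify the squaring and Fubini manipulations is a point the paper glosses over, and handling it as in \cite{BM} is the right fix.
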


\begin{Remark}
    Unlike Theorem 2.4 of \cite{hillairet2023explicit} in which the authors give an exact expression of $\E [\lambda_t^2]$, we provide here an upper bound on that quantity. Our result has the advantage of being explicit in the parameters of the Hawkes process and of illustrating that the second moment is also bounded in $t$ when the stability condition is verified. 
\end{Remark}
\begin{proof}
Recall that from Definition \ref{def:hawkes_continuous}
\begin{align*}
    \lambda_t =\psi \left( \int_{[0,t) \times \R_+ \times \R} h(t-s) \mathds 1_{\theta \leq \lambda _s} b(y)P(\dx s, \dx \theta, \dx y) \right).
\end{align*}
Since $\Psi$ is Lipschitz continuous, 
we have 
\begin{align*}
    \lambda_t &\leq \psi (0)+ L\int_{[0,t) \times \R_+ \times \R} |h(t-s)| \mathds 1_{\theta \leq \lambda _s} b(y)P(\dx s, \dx \theta, \dx y)\\
    &\leq \tilde{M}_t +L \E[b(Y)]\int_0^t |h(t-s)| \lambda_s ds,
\end{align*}
where
\begin{align*}
    \tilde{M}_t = \psi (0)+ L\int_{[0,t) \times \R_+ \times \R} |h(t-s)| \mathds 1_{\theta \leq \lambda _s} b(y)\left(P(\dx s, \dx \theta, \dx y) - \dx s \dx \theta \nu(\dx y) \right).
\end{align*}

Using the same lines as the proof of Lemma \ref{lmm:intensite_bornee} or Lemma 3 of \cite{BACRY20132475},
\begin{align*}
   \lambda_t \leq  \tilde{M}_t + \int_0^t S(t-s) \tilde{M}_s \dx s
\end{align*}
where $S $ is defined in \eqref{def-S}.
Then, 
\begin{align*}
   \lambda_t^2 &\leq  \tilde{M}_t^2 + 2\int_0^t S(t-s) \tilde{M}_s \tilde{M}_t \dx s + \left(\int_0^t S(t-s) \tilde{M}_s \dx s\right)^2\\
   &\leq  \tilde{M}_t^2 + 2\int_0^t S(t-s) \tilde{M}_s \tilde{M}_t \dx s + \int_{[0,t]^2} S(t-s) S(t-u)\tilde{M}_s \tilde{M}_u \dx s \dx u.
\end{align*}
According to the definition of $\tilde{M}_t , $ the fact that $h$ is bounded, $\rho_{h}=L\E[b(Y)] \|h\|_1<1$ and Lemma \ref{lmm:delta}
\begin{align*}
    \E \left[ \tilde{M}_s\tilde{M}_u \right]&= \psi(0)^2 + L^2 \int_0^{\min(u,s) } \E[b(Y)^2] |h(s-r)||h(u-r)| \E[\lambda_r]\dx r \\
    &{\leq \psi(0)^2 + L^2 \E[b(Y)^2] \frac{\psi(0)}{1-\rho_{h}}\int_0^{\min(u,s) }  |h(s-r)||h(u-r)| \dx r}\\
    &{\leq \psi(0)^2 + L^2 \E[b(Y)^2] \frac{\psi(0)}{1-\rho_{h}}\left(\int_0^{s }  h^2(s-r) \dx r\right)^{1/2} \left(\int_0^{u }  h^2(u-r) \dx r\right)^{1/2}}\\
    &{= \psi(0)^2 + L^2 \E[b(Y)^2] \frac{\psi(0)}{1-\rho_{h}}\|h\|^2_2}.
\end{align*}
Since $\|S\|_1 ={\rho_{h}}[1-\rho_{h}]^{-1}$ we have 
\begin{align*}
    \E[\lambda_t^2 ] &\leq
     \left (\psi(0)^2 + L^2 \E[b(Y)^2] \frac{\psi(0)}{1-\rho_{h}}\|h\|^2_2 \right)\left( 1 + \frac{2\rho_{h}}{1-\rho_{h}} + \frac{\rho_{h}}{(1-\rho_{h})^2}\right)\\
    &=\left (\psi(0)^2 + L^2 \E[b(Y)^2] \frac{\psi(0)}{1-\rho_{h}}\|h\|^2_2 \right)\frac{1}{(1-\rho_{h})^2}.
\end{align*}
\end{proof}

    \subsection{Estimation on the modulus of continuity of  Compound Poisson processes}
  
    \begin{Lemma}
    \label{lmm:module_poisson}
        Let $N$ be a Poisson process of intensity $I$ on $[0,1]$ and $R_t=\sum_{k=0}^{N_t}|Y_k|$ where $(Y_k)_k$ is a sequence of independent identically distributed random variables with common distribution $\nu$ such that $\int_{{\mathbb R}} |y|\nu(\dx y)<+\infty.$ Then its average modulus of continuity in ${\mathbb D}([0,T],{\mathbb R})$   is bounded by 
        $$\E \omega'_R(\Delta,[0,T]) \leq \E |Y_1|IT \frac{\Delta}{T} (2+4IT)=2\E |Y_1|I \Delta (1+2IT).$$

    \end{Lemma}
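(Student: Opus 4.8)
The plan is to exploit the fact that $R$ is nondecreasing and purely atomic, so that on any interval the oscillation equals the total size of the jumps strictly interior to it, and then to build an explicit $\Delta$-sparse partition whose breakpoints sit \emph{at jump times} whenever possible. Write the jump times of $N$ in $(0,T]$ as $\tau_1<\tau_2<\cdots<\tau_{N_T}$ with marks $|Y_k|$. Because $R$ is right-continuous and increasing, if a partition point coincides with a jump time that jump is absorbed into the left endpoint and is invisible to the oscillation; hence an interval $[t_{i-1},t_i)$ contributes only $\sum_{\tau_k\in(t_{i-1},t_i)}|Y_k|$ to $\sup_{u,v\in[t_{i-1},t_i)}|R_u-R_v|$.

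First I would construct the partition greedily: set $s_0=0$, and having chosen $s_j$, let $s_{j+1}$ be the first jump time lying in $(s_j+\Delta,T)$, declaring every jump in the window $(s_j,s_j+\Delta]$ to be \emph{skipped}; finally append $T$, merging the last interval into its predecessor if $T-s_{\mathrm{last}}\le\Delta$. By construction consecutive breakpoints differ by more than $\Delta$, so the partition is $\Delta$-sparse, and the only jumps interior to any interval are the skipped ones, each lying within distance $\Delta$ to the right of a breakpoint. Since the skipping windows $(s_j,s_j+\Delta]$ are pairwise disjoint, bounding the maximal interval-oscillation by the sum of all interval-oscillations gives
\begin{align*}
\omega'_R(\Delta)\le\sum_{k\ \mathrm{skipped}}|Y_k|.
\end{align*}

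Next I would take expectations. As the marks $(Y_k)_k$ are i.i.d.\ of mean $\E|Y_1|$ and independent of the point configuration (the skipping rule depends only on the jump \emph{times}), Wald's identity yields $\E\,\omega'_R(\Delta)\le\E|Y_1|\,\E[\#\{\mathrm{skipped}\}]$. The skipped jumps lie in $\bigcup_j(s_j,s_j+\Delta]$: the window anchored at $s_0=0$ contributes $\E[N((0,\Delta])]=I\Delta$, and every other window is anchored at a genuine jump time. Bounding the count over the selected breakpoints by the count over \emph{all} jumps and applying the Campbell--Mecke formula,
\begin{align*}
\E\Big[\sum_{k}N\big((\tau_k,\tau_k+\Delta]\big)\Big]=\int_0^T I\,\E\big[N((s,s+\Delta])\big]\,\dx s\le I^2\Delta T,
\end{align*}
so $\E[\#\{\mathrm{skipped}\}]$ is controlled by a term of order $I\Delta$ and a term of order $I^2\Delta T$. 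Generous accounting of the boundary windows near $0$ and $T$ and of the possible final merge produces the constants $2$ and $4$, giving $\E\,\omega'_R(\Delta)\le\E|Y_1|\,I\Delta(2+4IT)$, which is the claim after writing $I\Delta=IT\cdot\tfrac{\Delta}{T}$.

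The main obstacle is not the probabilistic step, which is a routine first-moment (Campbell--Mecke) estimate, but the deterministic combinatorics of the partition: one must check that the greedy breakpoints are genuinely $\Delta$-separated, that every non-breakpoint jump is captured by exactly one skipping window so that no interior oscillation is overlooked, and that the edge cases at $0$ and $T$ (notably when a jump or the endpoint forces a merge) are resolved without breaking $\Delta$-sparsity. This bookkeeping is precisely what inflates the constants from the sharp $(1+IT)$ obtained above to the stated $(2+4IT)$.
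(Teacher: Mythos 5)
Your proposal is correct in substance but follows a genuinely different route from the paper's. The paper conditions on $N_T=n$, uses the order-statistics representation of the arrival times, splits into the event $A_1$ (all inter-arrival times exceed $\Delta$ but the last jump falls within $\Delta$ of the endpoint, so the modulus is at most $|Y_n|$) and the event $A_2$ (some inter-arrival time is $\le\Delta$, in which case the modulus is crudely bounded by the sum of \emph{all} marks), computes $\mathbb P_n[A_1]\le n\Delta$ and $\mathbb P_n[A_2]\le n^2\Delta$, averages over the Poisson law, and finally rescales from $[0,1]$ to $[0,T]$. You instead work pathwise on $[0,T]$ directly: a greedy $\Delta$-sparse partition anchored at jump times, the observation that only ``skipped'' jumps are interior to partition intervals, and a Mecke/Campbell first-moment bound $\E\bigl[\sum_k N((\tau_k,\tau_k+\Delta])\bigr]\le I^2\Delta T$. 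Your route buys two things: it dispenses with the conditioning and the rescaling step, and --- because you only pay for the jumps inside the skipping windows rather than for all marks on the event $A_2$ --- it yields a bound of the form $\E|Y_1|\,I\Delta(2+IT)$, whereas the paper's treatment of $A_2$ costs a third moment $\E[N_T^3]$ of the Poisson variable; your estimate is therefore sharper, especially when $IT$ is large, and trivially implies the stated inequality with constant $(2+4IT)$.

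The one point you must nail down, and which you only flagged, is the merge at the right endpoint. When $T-s_{\mathrm{last}}\le\Delta$ and you drop $s_{\mathrm{last}}$ as a breakpoint, the jump \emph{at} $s_{\mathrm{last}}$ becomes interior to the final interval $[s_{\mathrm{last}-1},T)$, so the inequality $\omega'_R(\Delta)\le\sum_{k\ \mathrm{skipped}}|Y_k|$ is false as written in that case: the term $|Y_{(s_{\mathrm{last}})}|\mathds 1_{\mathrm{merge}}$ must be added. This is easily absorbed: a merge forces a jump in $(T-\Delta,T)$, so $\E\bigl[|Y_{(s_{\mathrm{last}})}|\mathds 1_{\mathrm{merge}}\bigr]\le \E|Y_1|\,\mathbb P\bigl[N((T-\Delta,T))\ge 1\bigr]\le \E|Y_1|\,I\Delta$, which together with your $I\Delta+I^2\Delta T$ gives $\E\,\omega'_R(\Delta)\le\E|Y_1|\,I\Delta(2+IT)\le\E|Y_1|\,I\Delta(2+4IT)$, as required. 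Note also that your Wald step is legitimate precisely because the set of skipped indices (and the merge event) is a function of the jump \emph{times} alone, and for this compound Poisson process the marks are independent of the times; this independence is implicitly used by the paper as well, and it does hold for the dominating process $\Pi$ to which the lemma is applied in Theorem \ref{thm:skorokhod}.
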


    \begin{proof}
First, we assume that $T=1.$ Let $\tau_1, \tau_2, \ldots$ denote the arrival times of the Poisson process and $S_1, S_2,\ldots$ be the inter-arrival times. Using the law of total probability we have that
         \begin{align*}
             \E \omega'_R(\Delta,[0,1])  &= \sum_{n=1}^{+\infty}\E [\omega'_R(\Delta)|N_T=n] \mathbb P [N_T=n] \\
             &= \sum_{n=1}^{+\infty}\E_n[\omega'_R(\Delta)] \frac{(I)^n}{n!}e^{-I}. \\
         \end{align*}
         Knowing that $N_T=n$, the distribution of the arrival times is that of the order statistics of $n$ uniform random variables on $[0,1]$, that is of density
         $p(\tau_1=t_1,\cdots, \tau_n=t_n)=\mathds 1_{0 \leq t_1  \leq \cdots \leq t_n \leq 1}n!$. Using an affine change of variables we obtain a similar formula for the density of the inter-arrival times 
         $$p(S_1=s_1,\cdots, S_n=s_n)=\mathds 1_{0\leq s_1+\cdots+ s_n   \leq 1} n!\prod_{i=1}^n \mathds 1_{s_i \geq 0}.$$
         Two scenarii are possible
         \begin{enumerate}
             \item All of the inter-arrival times are larger than $\Delta$ (only possible if $n\Delta\leq T$). In this case we only have two possibilities:
             \begin{itemize}
                 \item $\omega '_R(\Delta,[0,1])=|Y_n|$ if the last arrival time $\tau_n$ is at a distance less than $\Delta$ from $1$.
                 \item $\omega '_R(\Delta,[0,1])=0$ otherwise.
             \end{itemize}
             \item At least one interarrival time $S_i$ for $i=1,\ldots,n$ is less than $\Delta$. In this case, the worst case scenario is to have all of the jumps in one interval of size at most $\Delta$, yielding 
             $$\omega '_R(\Delta,[0,1])\leq \sum_{k=1}^n |Y_k|.$$
         \end{enumerate}
         Hence we have that 
        \begin{align*}
            \E _n\omega '_R(\Delta,[0,1])&\leq \E_n |Y_n|  \mathbb P_n [A_1] + \sum_{k=1}^n \E_n|Y_k| \mathbb P_n[A_2] \\
            &=\E |Y_1|  \mathbb P_n [A_1] + n \E|Y_1| \mathbb P_n[A_2],
        \end{align*}
        where $A_1=\{S_i\geq \Delta, \forall i=1,\ldots, n \text{ and } 1-{\tau_n} <\Delta\}$ and $A_2=\{ \exists i \in [1,n], S_i\leq \Delta\}$.\\
        Keeping in mind that $\tau_n$ is the maximum of $n$ uniform iid variables on $[0,1]$ we have that 
        \begin{align*}
             \mathbb P_n [A_1] &\leq \mathbb P_n[1-\Delta  <{\tau_n}]\\
             & =1-  \mathbb P _n [1-\Delta  >{\tau_n}]\\
             &=1-  \mathbb P _n [1-\Delta  >\mathcal{U} [0,1]]^n \quad \text{because $\tau_n= \max_{i=1,\cdots,n} U_i $}\\
             &= 1- \left(1-\Delta \right)^n\\
             &\leq {(n\Delta)\wedge 1}.\\
        \end{align*}
       
        For the event $A_2$ we have that 
        \begin{align*}
            \mathbb P_n[A_2] &= \mathbb P_n [\min_{1\leq i \leq n} (S_i) \leq \Delta]\\
            &= \int_{\R_+^n} \mathds {1}_{\min_{1\leq i \leq n} (s_i) \leq \Delta}\mathds 1_{0\leq s_1+\cdots+ s_n   \leq 1} n!\dx s_1 \ldots \dx s_n \\
            &\leq \sum _{i=1}^n \int _{0\leq s_1+\cdots+ s_n   \leq 1} \mathds 1_{s_i \leq \Delta } n! \dx s_1 \ldots \dx s_n\\
            &=n \int _{0\leq s_1+\cdots+ s_n   \leq 1} \mathds 1_{s_1 \leq \Delta } n! \dx s_1 \ldots \dx s_n\\
            &\leq n n! \int_0^\Delta \dx s_1 \int_{0 \leq s_2 +\cdots + s_n \leq 1}\dx s_2 \ldots \dx s_n\\
            &= n n! \Delta \frac{1}{(n-1)!}\\
            &= n^2 \Delta .
        \end{align*}
  
    Therefore 
    $$\mathbb E_n \omega '_R(\Delta,[0,1])\leq \left[{n} \E  |Y_1| + {n \E |Y_1|} n^2\right] \Delta.$$
    And by averaging over the Poisson variable:
    \begin{align}
    \label{ineq:modulus}
        \mathbb E [\omega'_R(\Delta,[0,1])]  &\leq  \E |Y_1| \Delta (2 + 3 I  + I) I  \nonumber\\
        &=\E |Y_1| \Delta I (2 + 3 I + I).
    \end{align}
 Now we define the time scaled process $R^T_v=R_{vT}$, where $v\in [0,T]$. The process $R^T$ is also a compound Poisson process of intensity $IT$. Using the fact that $\Delta-$sparse subdivisions of $[0,T]$ are exactly the $\frac{\Delta}{T}-$sparse subdivisions of $[0,1]$ multiplied by $T$, we have that 
   \begin{align*}
       \omega'_{R}(\Delta, [0,T]) &= \inf_{\Delta-\text{sparse}}\max _{1\leq i \leq K} \sup_{u,v \in [t_{i-1},t_{i})}|R_u-R_v|\\
       &=\inf_{\Delta-\text{sparse}}\max _{1\leq i \leq K} R_{t_i-}-  R_{t_{i-1}}\\
       &=\inf_{\Delta-\text{sparse}}\max _{1\leq i \leq K} R_{T\frac{t_i-}{T}}-  R_{T\frac{t_{i-1}}{T}}\\
       &=\inf_{\frac{\Delta}{T}-\text{sparse}}\max _{1\leq i \leq K} R^{T}_{s_i-}-  R^{T}_{s_{i-1}}=\omega'_{R^T} \left(\frac{\Delta}{T}, [0,1] \right).\\
   \end{align*}
We now take the expected value and use the upper bound \eqref{ineq:modulus} with intensity $IT$, time step $\frac{\Delta}{T}$ and time horizon $1$ to obtain 
$$\E \omega'_R(\Delta,[0,T]) \leq \E |Y_1|IT \frac{\Delta}{T} (2+4IT)=2\E |Y_1|I \Delta (1+2IT).$$

  \end{proof}

    \begin{Lemma}\label{lem-indic-sob}
        Let $p\geq 1,$ $\eta \in]0, p^{-1}]$ $0 <a <b,$  then ${\mathbf 1}_{[a,b]} \in W^{\eta,p}_T\cap I_{0^+}^{\eta}(L^1([0,T]).$
    \end{Lemma}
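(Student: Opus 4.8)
The plan is to treat the two memberships separately, since they rely on different mechanisms: finiteness of a Gagliardo-type double integral for the $W^{\eta,p}_T$ part, and an explicit fractional-primitive construction for the $I^\eta_{0^+}(L^1([0,T]))$ part. In both cases $u=\mathbf{1}_{[a,b]}$ is simple enough that everything reduces to one-dimensional integrals with algebraic singularities located at the two jump points $a$ and $b$, so the whole argument is a careful bookkeeping of those singularities.

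For the fractional Sobolev membership, the term $\int_0^T |u(t)|^p\,dt=b-a$ is trivially finite. For the seminorm I would use that $u$ takes only the values $0$ and $1$, so $|u(t)-u(s)|^p=\mathbf{1}_{\{\,\text{exactly one of } s,t\in[a,b]\,\}}$ independently of $p$. By symmetry the double integral equals $2\int_a^b\!\int_{[0,T]\setminus[a,b]}|t-s|^{-(1+p\eta)}\,ds\,dt$. Integrating the inner variable (splitting $s<a$ and $s>b$) produces, up to the factor $1/(p\eta)$, the one-dimensional integral $\int_a^b\big[(t-a)^{-p\eta}-t^{-p\eta}+(b-t)^{-p\eta}-(T-t)^{-p\eta}\big]\,dt$. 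Only the terms $(t-a)^{-p\eta}$ near $t=a$ and $(b-t)^{-p\eta}$ near $t=b$ can blow up, and after the obvious substitution each is of the form $\int_0^{b-a}w^{-p\eta}\,dw$, which converges precisely when $p\eta<1$. This is exactly where the restriction $\eta\le p^{-1}$ enters, and the borderline exponent $\eta=p^{-1}$ is the delicate case that must be watched.

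For the Riemann--Liouville membership I would exhibit the fractional antiderivative explicitly, via the characterisation (\cite{SKM}) that $f\in I^\eta_{0^+}(L^1([0,T]))$ iff $I^{1-\eta}_{0^+}f$ is absolutely continuous and vanishes at $0$. Computing $g:=I^{1-\eta}_{0^+}\mathbf{1}_{[a,b]}$ directly from the definition gives $g(t)=0$ for $t\le a$, $g(t)=\frac{(t-a)^{1-\eta}}{\Gamma(2-\eta)}$ for $a\le t\le b$, and $g(t)=\frac{(t-a)^{1-\eta}-(t-b)^{1-\eta}}{\Gamma(2-\eta)}$ for $t\ge b$. This $g$ is continuous on $[0,T]$ with $g(0)=0$, and its pointwise derivative $g'(t)=\frac{1}{\Gamma(1-\eta)}\big[(t-a)^{-\eta}\mathbf{1}_{t>a}-(t-b)^{-\eta}\mathbf{1}_{t>b}\big]$ lies in $L^1([0,T])$ because $\int(t-a)^{-\eta}\,dt<\infty$ for $\eta<1$. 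Since $g$ is the integral of an $L^1$ function and $g(0)=0$, it is absolutely continuous, whence $\mathbf{1}_{[a,b]}=I^\eta_{0^+}(g')$ with $g'\in L^1$. I would emphasise that this half works for every $\eta\in(0,1)$, so the constraint $\eta\le p^{-1}$ is dictated solely by the Sobolev part.

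The main obstacle is therefore not the algebra but the integrability near the two jumps: the Gagliardo kernel $|t-s|^{-(1+p\eta)}$ is only barely integrable against the indicator, and the entire content of the admissible $\eta$-range is that the singularities $(t-a)^{-p\eta}$ and $(b-t)^{-p\eta}$ remain integrable. I would spend the most care on the inner-integral estimate and on confirming that the cross terms $t^{-p\eta}$ and $(T-t)^{-p\eta}$, which are bounded away from the jumps, contribute nothing problematic. By contrast, the fractional-integral computation for the second membership is routine once the antiderivative $g$ is guessed and its regularity checked.
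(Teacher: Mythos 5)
Your plan is correct on the range where the statement itself is correct, and the Sobolev half coincides with the paper's own argument: the paper also writes $|\mathbf{1}_{[a,b]}(t)-\mathbf{1}_{[a,b]}(s)|$ as the indicator that exactly one of $s,t$ lies in $[a,b]$ and evaluates the Gagliardo double integral in closed form, obtaining a constant proportional to $1/(p\eta(1-p\eta))$. For the Riemann--Liouville half you take a mildly different route: the paper exhibits the preimage directly, $\mathbf{1}_{[a,+\infty)}=I_{0^+}^{\eta}(g_{a,\eta})$ with $g_{a,\eta}(t)=\frac{1}{\Gamma(1-\eta)}(t-a)_+^{-\eta}$ (a Beta-function identity), and concludes by linearity, whereas you compute $I_{0^+}^{1-\eta}\mathbf{1}_{[a,b]}$, check that it is absolutely continuous with $L^1$ derivative and vanishes at $0$, and invoke the Samko--Kilbas--Marichev characterization of $I_{0^+}^{\eta}(L^1)$. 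Both routes land on the same preimage $\frac{1}{\Gamma(1-\eta)}\left[(t-a)_+^{-\eta}-(t-b)_+^{-\eta}\right]$; yours imports the characterization theorem as an external result, the paper's verification is self-contained, and, as you correctly note, this half works for every $\eta\in(0,1)$.

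The borderline case you only promise to ``watch'' must, however, be settled, because the statement is false there: at $\eta=p^{-1}$ one has $p\eta=1$, and your own inner-integral computation leaves $\int_a^b(t-a)^{-1}\,\dx t=+\infty$, so the Gagliardo seminorm diverges (for instance $\mathbf{1}_{[a,b]}\notin W^{1/2,2}$). Hence membership in $W^{\eta,p}_T$ holds precisely for $\eta\in\,]0,p^{-1}[$, and the lemma should be read with strict inequality. This is not a defect of your argument relative to the paper's: the paper's closed-form constant $\frac{2}{p\eta(1-p\eta)}$ blows up at $p\eta=1$ as well, so its proof also covers only the open interval; but your write-up should state the conclusion --- the seminorm is finite if and only if $p\eta<1$ --- rather than leave the endpoint open.
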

    \begin{proof}
        It is clear that ${\mathbf 1}_{[a,b]}\in L^p({\mathbb R}^+).$

        Let $s<t$ then $\left|{\mathbf 1}_{[a,b]}(t) -{\mathbf 1}_{[a,b]}(s) \right|= {\mathbf 1}_{[a,b]}(t){\mathbf 1}_{]0,a[}(s) + {\mathbf 1}_{[a,b]}(s){\mathbf 1}_{]b,+\infty[}(s) $ thus
        \begin{align*}
            \|{\mathbf 1}_{[a,b]}\|_{W^{\eta,p}_T}^p= (b-a) + \frac{2}{p\eta( 1-p\eta)} 
            [2(b-a)^{1-p\eta}-b^{1-p\eta} +a^{1-p\eta}+|T-b|^{1-p\eta} -|T-a|^{1-p\eta}]
        \end{align*}

        Moreover,
        For $a>0,$
        \begin{align*}
            {\mathbf 1}_{[a,+\infty]}= I_{0^+}^{\eta}(g_{a,\eta})
        \end{align*}
        where   $g_a(t)= \frac{\Gamma(\eta)}{\int_0^1 (1-u)^{\eta-1}u^{-\eta} \dx u}(t-a)_+^{-\eta},~~t\geq 0.$ 
    \end{proof}

\subsection{Estimation for finite $p$ variation kernels}

We now give a more exploitable bound instead of $C_R(\Delta)$ for a class of kernels.
   \begin{Lemma}\label{lmn-p-var-majo}

       Let $T>0$ and $p\geq 1$. Assume that $h$ is of bounded $p$-variation on $[0,T].$ Then there exists a constant $K$, independent from $T,$  $h$ and $\Delta$ such that 
       \begin{align*}
           &\int_0^{\Delta}|h(y) |\dx y \leq \Delta \|h\|_{\infty},\\
           & \sup_{\varepsilon \leq \Delta}\int_0^T |h(t+\epsilon)-h(t)| \dx t \leq   K \|h\|_{p-var,T} \left(T ^{\frac{p-1}{p}}\Delta ^{\frac{1}{p}} +\Delta \right),\\
           &\int_0^T |h(y)-h\left((y)_\Delta\right)| \leq K \|h\|_{p-var,T} \left(T ^{\frac{p-1}{p}}\Delta ^{\frac{1}{p}} +\Delta \right)
       \end{align*}
       where $\|h\|_{p-var,T}$ is the $p-$variation semi-norm of $h$ on $[0,T]$ and $\|h\|_{\infty,T}=\sup_{0\leq t \leq T} |h(t)|.$
       Moreover  for $\Delta$ small enough Assumption \ref{ass:stability_discrete} is fulfilled. 
   \end{Lemma}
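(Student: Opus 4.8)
The plan is to establish the three integral estimates in turn and then deduce the stability claim from them. Throughout I would use the two elementary facts about a function of finite $p$-variation: it is bounded, so that $\|h\|_{\infty,T}<+\infty$, and the set functional $[a,b]\mapsto \|h\|_{p-var,[a,b]}^p$ is superadditive for $p\geq 1$. The first inequality is then immediate, since the boundedness of $h$ gives $\int_0^{\Delta}|h(y)|\dx y\leq \Delta\,\|h\|_{\infty,T}$.

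For the shift estimate I would fix $\epsilon\leq\Delta$ and slice $[0,T]$ along arithmetic progressions indexed by a residue $r$. Writing $t=j\epsilon+r$ with $r\in[0,\epsilon)$, for each fixed $r$ the points $\{j\epsilon+r\}$ form a subdivision of $[0,T]$ of mesh $\epsilon$ with at most $\lceil T/\epsilon\rceil\leq T/\epsilon+1$ nodes, and
$$\int_0^{T}|h(t+\epsilon)-h(t)|\dx t=\int_0^{\epsilon}\sum_j\bigl|h((j+1)\epsilon+r)-h(j\epsilon+r)\bigr|\dx r.$$
For fixed $r$ the inner sum is a sum of increments of $h$ along that subdivision, so by the definition of the $p$-variation $\sum_j|h((j+1)\epsilon+r)-h(j\epsilon+r)|^p\leq\|h\|_{p-var,T}^p$. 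Applying the power-mean (Hölder) inequality over the $N_r\leq T/\epsilon+1$ increments bounds the inner sum by $N_r^{1-1/p}\|h\|_{p-var,T}$, and the subadditivity of $x\mapsto x^{1-1/p}$ on $[0,\infty)$ gives $N_r^{1-1/p}\leq (T/\epsilon)^{1-1/p}+1$. Integrating over $r\in[0,\epsilon)$ and simplifying $\epsilon(T/\epsilon)^{(p-1)/p}=T^{(p-1)/p}\epsilon^{1/p}$ then yields the stated bound, since $\epsilon\leq\Delta$.

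For the projection estimate I would decompose $[0,T]$ into the $\Delta$-blocks $[k\Delta,(k+1)\Delta)$, on which $(y)_\Delta=k\Delta$, so that $|h(y)-h((y)_\Delta)|$ is dominated by the oscillation of $h$ over the block, hence by its block $p$-variation $V_k:=\|h\|_{p-var,[k\Delta,(k+1)\Delta]}$; thus $\int_{k\Delta}^{(k+1)\Delta}|h(y)-h((y)_\Delta)|\dx y\leq\Delta V_k$. Summing over $k$, invoking superadditivity in the form $\sum_k V_k^p\leq\|h\|_{p-var,T}^p$, and applying the power-mean inequality over the $\lceil T/\Delta\rceil$ blocks produces $\Delta\sum_k V_k\leq\|h\|_{p-var,T}\bigl(T^{(p-1)/p}\Delta^{1/p}+\Delta\bigr)$, of the same form as before.

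Finally, the projection estimate (applied with the upper endpoint of each block) shows that $\int_0^{T-\Delta}|h(t)-h((t)_\Delta+\Delta)|\dx t$ tends to $0$ as $\Delta\to0$, so Assumption \ref{ass:limit} holds; Lemma \ref{lmm:superfluous} then guarantees that Assumption \ref{ass:stability_discrete} is satisfied for every $\Delta$ below some threshold. The main obstacle I expect is the careful handling of the partial block, i.e. the ceiling in the node count, in the two Hölder applications: it is precisely this rounding, together with the subadditivity of $x\mapsto x^{1-1/p}$, that generates the extra additive $\Delta$ term, and one must also take care to invoke the superadditivity of the $p$-variation functional over exactly the partitions used.
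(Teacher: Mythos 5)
Your proposal is correct and follows essentially the same route as the paper: the central shift estimate is obtained exactly as in the paper's proof, by slicing $[0,T]$ along arithmetic progressions of step $\epsilon$ and applying H\"older's inequality to the at most $T/\epsilon+1$ increments (with the partial block and the subadditivity of $x\mapsto x^{1-1/p}$ producing the extra additive $\Delta$ term), and the final stability claim is deduced through Assumption \ref{ass:limit} and Lemma \ref{lmm:superfluous} just as the paper does via its inequality \eqref{ineq: hyp_superflue}. The only deviation is in the third estimate, where you bound $|h(y)-h((y)_\Delta)|$ by the oscillation over the block, hence by the block $p$-variation, and then invoke superadditivity of $\|h\|_{p-var,[\cdot]}^p$ over adjacent blocks, whereas the paper re-runs its change-of-variables and H\"older computation on a refined partition; this is a minor (and arguably cleaner) variant that yields the identical bound.
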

\begin{proof}
    We start the proof by providing an upper bound on the modulus of continuity of the shift operator in $L_1$, along the lines of Lemma A.1 in \cite{MR3443431}. Let $0<\epsilon \leq \Delta$ 
    \begin{align*}
        \int_0^T |h(t+\epsilon)-h(t)| \dx t&= \sum_{j=1}^{\lfloor T/\epsilon \rfloor+1} \int_{(j-1)\epsilon}^{j \epsilon \wedge T} |h(t+\epsilon)-h(t)| \dx t \\
        &=\sum_{j=1}^{\lfloor T/\epsilon \rfloor+1} \int_{0}^{\epsilon} |h((t+j\epsilon)\wedge T)-h(t+(j-1)\epsilon)| \dx t\\
        &=\int_{0}^{\epsilon} \sum_{j=1}^{\lfloor T/\epsilon \rfloor+1} |h((t+j\epsilon)\wedge T)-h(t+(j-1)\epsilon)| \dx t.\\
        \end{align*}
        Using Hölder's inequality, we have for $t\in[0,\epsilon]$ and $\frac{1}{q}=1-\frac{1}{p}$:
        \begin{align*}
            \sum_{j=1}^{\lfloor T/\epsilon \rfloor+1} |h((t+j\epsilon)\wedge T)&-h(t+(j-1)\epsilon)|\\  &\leq \left(\sum_{j=1}^{\lfloor T/\epsilon \rfloor+1} |h((t+j\epsilon)\wedge T)-h(t+(j-1)\epsilon)|^p \right)^{1/p} \left(\sum_{j=1}^{\lfloor T/\epsilon \rfloor+1}1\right)^{1/q}\\
            &\leq \|h\|_{p-var,T} \left( \lfloor T/\epsilon \rfloor+1\right)^{\frac{p-1}{p}}\\
            &\leq K \|h\|_{p-var,T} \left( \left(\frac{T}{\epsilon}\right) ^{\frac{p-1}{p}} +1\right).
        \end{align*}
        And by integrating from $0$ to $\epsilon$ we get
        $$\int_0^T |h(t+\epsilon)-h(t)| \dx t \leq K \|h\|_{p-var,T} \left(T ^{\frac{p-1}{p}}\epsilon ^{\frac{1}{p}} +\epsilon\right),$$
        which by taking the supremum of $\epsilon$ between $0$ and $\Delta$ yields
        $$\sup_{\epsilon \in [0,T]} \int_0^T |h(t+\epsilon)-h(t)| \dx t \leq K \|h\|_{p-var,T} \left(T ^{\frac{p-1}{p}}\Delta ^{\frac{1}{p}} +\Delta \right).$$
        In a similar fashion, we also show that 
        $$\int_0^T |h(y)-h\left((y)_\Delta\right)| \leq K \|h\|_{p-var,T} \left(T ^{\frac{p-1}{p}}\Delta ^{\frac{1}{p}} +\Delta \right).$$
        
        Indeed, 
        \begin{align*}
            \int_0^T |h(y)-&h((y)_{\Delta})|\dx y\\ &=\sum_{k=1}^{M}\int_{t_{k-1}}^{t_k } |h(y) - h((y)_{\Delta})|\dx y\\
            &=\int_0^{\Delta} \sum_{k=1}^{M} |h( (t_{k-1}+r)\wedge T) -h(t_{k-1})| \dx r\\
            &\leq \int_0^{\Delta} \sum_{k=1}^{M} |h( (t_{k-1}+r)\wedge T) -h(t_{k-1})| + | h(t_{k} \wedge T) - h( (t_{k-1}+r)\wedge T)| \dx r.
            \end{align*}
            Thus, 
            we also show that 
        $$\int_0^T |h(y)-h\left((y)_\Delta\right)| \leq K \|h\|_{p-var,T} \left(T ^{\frac{p-1}{p}}\Delta ^{\frac{1}{p}} +\Delta \right).$$
        
           In a similar fashion, we can show that 
        \begin{equation}
            \label{ineq:module_shift_2}
            \int_0^{T-\Delta} |h((t)_\Delta+\Delta)-h(t)| \dx t \leq \|h\|_{p-var,T}  T^{\frac{p-1}{p}}\Delta ^{\frac{1}{p}}.
        \end{equation}
        This means that, $\lim_{\Delta \to 0} \int_0^{T-\Delta} |h((t)_\Delta+\Delta)-h(t)| \dx t =0$ and hence, thanks to Inequality \ref{ineq: hyp_superflue} we have that Assumption \ref{ass:stability_discrete} is in force. 
        
\end{proof}

\appendix
\section*{Funding}
This work was supported by the ANR EDDA Project-ANR-20-IADJ-0003. Mahmoud Khabou acknowledges support from EPSRC NeST Programme grant EP/X002195/1.

\section*{Competing interests}
There were no competing interests to declare which arose during the preparation or publication process of this article.





  \bibliographystyle{elsarticle-num} 
  \bibliography{biblio}

\begin{thebibliography}{10}
\expandafter\ifx\csname url\endcsname\relax
  \def\url#1{\texttt{#1}}\fi
\expandafter\ifx\csname urlprefix\endcsname\relax\def\urlprefix{URL }\fi
\expandafter\ifx\csname href\endcsname\relax
  \def\href#1#2{#2} \def\path#1{#1}\fi

\bibitem{3b77a5cc-11e2-3e0a-b8e5-ecf028e7f216}
A.~G. Hawkes, \href{http://www.jstor.org/stable/2984686}{Point spectra of some mutually exciting point processes}, Journal of the Royal Statistical Society. Series B (Methodological) 33~(3) (1971) 438--443.
\newline\urlprefix\url{http://www.jstor.org/stable/2984686}

\bibitem{errais2010affine}
E.~Errais, K.~Giesecke, L.~R. Goldberg, Affine point processes and portfolio credit risk, SIAM Journal on Financial Mathematics 1~(1) (2010) 642--665.

\bibitem{LEE2017154}
K.~Lee, B.~K. Seo, \href{https://www.sciencedirect.com/science/article/pii/S0165188917300945}{Modeling microstructure price dynamics with symmetric hawkes and diffusion model using ultra-high-frequency stock data}, Journal of Economic Dynamics and Control 79 (2017) 154--183.
\newblock \href {https://doi.org/https://doi.org/10.1016/j.jedc.2017.04.004} {\path{doi:https://doi.org/10.1016/j.jedc.2017.04.004}}.
\newline\urlprefix\url{https://www.sciencedirect.com/science/article/pii/S0165188917300945}

\bibitem{LOUZADAPINTO201686}
J.~{Louzada Pinto}, T.~Chahed, E.~Altman, \href{https://www.sciencedirect.com/science/article/pii/S016653161630058X}{A framework for information dissemination in social networks using hawkes processes}, Performance Evaluation 103 (2016) 86--107, performance Evaluation Methodologies and Tools: Selected Papers from ValueTools 2014.
\newblock \href {https://doi.org/https://doi.org/10.1016/j.peva.2016.06.004} {\path{doi:https://doi.org/10.1016/j.peva.2016.06.004}}.
\newline\urlprefix\url{https://www.sciencedirect.com/science/article/pii/S016653161630058X}

\bibitem{doi:10.1080/01621459.1988.10478560}
Y.~Ogata, \href{https://www.tandfonline.com/doi/abs/10.1080/01621459.1988.10478560}{Statistical models for earthquake occurrences and residual analysis for point processes}, Journal of the American Statistical Association 83~(401) (1988) 9--27.
\newblock \href {http://arxiv.org/abs/https://www.tandfonline.com/doi/pdf/10.1080/01621459.1988.10478560} {\path{arXiv:https://www.tandfonline.com/doi/pdf/10.1080/01621459.1988.10478560}}, \href {https://doi.org/10.1080/01621459.1988.10478560} {\path{doi:10.1080/01621459.1988.10478560}}.
\newline\urlprefix\url{https://www.tandfonline.com/doi/abs/10.1080/01621459.1988.10478560}

\bibitem{BM}
P.~Br\'{e}maud, L.~Massouli\'{e}, \href{https://doi.org/10.1214/aop/1065725193}{Stability of nonlinear {H}awkes processes}, Ann. Probab. 24~(3) (1996) 1563--1588.
\newblock \href {https://doi.org/10.1214/aop/1065725193} {\path{doi:10.1214/aop/1065725193}}.
\newline\urlprefix\url{https://doi.org/10.1214/aop/1065725193}

\bibitem{hillairet2023explicit}
C.~Hillairet, A.~Reveillac, Explicit correlations for the hawkes processes (2023).
\newblock \href {http://arxiv.org/abs/2304.02376} {\path{arXiv:2304.02376}}.

\bibitem{LAMBERT20189}
R.~C. Lambert, C.~Tuleau-Malot, T.~Bessaih, V.~Rivoirard, Y.~Bouret, N.~Leresche, P.~Reynaud-Bouret, \href{https://www.sciencedirect.com/science/article/pii/S0165027017304442}{Reconstructing the functional connectivity of multiple spike trains using hawkes models}, Journal of Neuroscience Methods 297 (2018) 9--21.
\newblock \href {https://doi.org/https://doi.org/10.1016/j.jneumeth.2017.12.026} {\path{doi:https://doi.org/10.1016/j.jneumeth.2017.12.026}}.
\newline\urlprefix\url{https://www.sciencedirect.com/science/article/pii/S0165027017304442}

\bibitem{d5dd0536-ae69-3f80-a080-f512bf5a819d}
A.~A. Alzaid, M.~Al-Osh, \href{http://www.jstor.org/stable/3214650}{An integer-valued pth-order autoregressive structure (inar(p)) process}, Journal of Applied Probability 27~(2) (1990) 314--324.
\newline\urlprefix\url{http://www.jstor.org/stable/3214650}

\bibitem{kirchner}
M.~Kirchner, \href{https://doi.org/10.1016/j.spa.2016.02.008}{Hawkes and {INAR{$(\infty)$}} processes}, Stochastic Process. Appl. 126~(8) (2016) 2494--2525.
\newblock \href {https://doi.org/10.1016/j.spa.2016.02.008} {\path{doi:10.1016/j.spa.2016.02.008}}.
\newline\urlprefix\url{https://doi.org/10.1016/j.spa.2016.02.008}

\bibitem{fokianos2012nonlinear}
K.~Fokianos, D.~Tj{\o}stheim, Nonlinear poisson autoregression, Annals of the Institute of Statistical Mathematics 64 (2012) 1205--1225.

\bibitem{mahmoud}
L.~Huang, M.~Khabou, \href{https://doi.org/10.1016/j.spa.2023.03.015}{Nonlinear {P}oisson autoregression and nonlinear {H}awkes processes}, Stochastic Process. Appl. 161 (2023) 201--241.
\newblock \href {https://doi.org/10.1016/j.spa.2023.03.015} {\path{doi:10.1016/j.spa.2023.03.015}}.
\newline\urlprefix\url{https://doi.org/10.1016/j.spa.2023.03.015}

\bibitem{SEOL2015223}
Y.~Seol, \href{https://www.sciencedirect.com/science/article/pii/S0167715215000292}{Limit theorems for discrete hawkes processes}, Statistics \& Probability Letters 99 (2015) 223--229.
\newblock \href {https://doi.org/https://doi.org/10.1016/j.spl.2015.01.023} {\path{doi:https://doi.org/10.1016/j.spl.2015.01.023}}.
\newline\urlprefix\url{https://www.sciencedirect.com/science/article/pii/S0167715215000292}

\bibitem{DZ}
A.~Dassios, H.~Zhao, \href{https://doi.org/10.1214/ECP.v18-2717}{Exact simulation of {H}awkes process with exponentially decaying intensity}, Electron. Commun. Probab. 18 (2013) no. 62, 13.
\newblock \href {https://doi.org/10.1214/ECP.v18-2717} {\path{doi:10.1214/ECP.v18-2717}}.
\newline\urlprefix\url{https://doi.org/10.1214/ECP.v18-2717}

\bibitem{1056305}
Y.~Ogata, On lewis' simulation method for point processes, IEEE Transactions on Information Theory 27~(1) (1981) 23--31.
\newblock \href {https://doi.org/10.1109/TIT.1981.1056305} {\path{doi:10.1109/TIT.1981.1056305}}.

\bibitem{HILLAIRET202389}
C.~Hillairet, A.~Réveillac, M.~Rosenbaum, \href{https://www.sciencedirect.com/science/article/pii/S0304414923000455}{An expansion formula for hawkes processes and application to cyber-insurance derivatives}, Stochastic Processes and their Applications 160 (2023) 89--119.
\newblock \href {https://doi.org/https://doi.org/10.1016/j.spa.2023.02.012} {\path{doi:https://doi.org/10.1016/j.spa.2023.02.012}}.
\newline\urlprefix\url{https://www.sciencedirect.com/science/article/pii/S0304414923000455}

\bibitem{BACRY20132475}
E.~Bacry, S.~Delattre, M.~Hoffmann, J.~Muzy, \href{https://www.sciencedirect.com/science/article/pii/S0304414913001026}{Some limit theorems for hawkes processes and application to financial statistics}, Stochastic Processes and their Applications 123~(7) (2013) 2475--2499, a Special Issue on the Occasion of the 2013 International Year of Statistics.
\newblock \href {https://doi.org/https://doi.org/10.1016/j.spa.2013.04.007} {\path{doi:https://doi.org/10.1016/j.spa.2013.04.007}}.
\newline\urlprefix\url{https://www.sciencedirect.com/science/article/pii/S0304414913001026}

\bibitem{karabash}
D.~Karabash, L.~Zhu, \href{https://doi.org/10.1080/15326349.2015.1024868}{Limit theorems for marked {H}awkes processes with application to a risk model}, Stoch. Models 31~(3) (2015) 433--451.
\newblock \href {https://doi.org/10.1080/15326349.2015.1024868} {\path{doi:10.1080/15326349.2015.1024868}}.
\newline\urlprefix\url{https://doi.org/10.1080/15326349.2015.1024868}

\bibitem{khabou2023normal}
M.~Khabou, N.~Privault, A.~R{\'e}veillac, Normal approximation of compound hawkes functionals, Journal of Theoretical Probability (2023) 1--33.

\bibitem{BRUTILIBERATI2007982}
N.~Bruti-Liberati, E.~Platen, \href{https://www.sciencedirect.com/science/article/pii/S0377042706004183}{Strong approximations of stochastic differential equations with jumps}, Journal of Computational and Applied Mathematics 205~(2) (2007) 982--1001, special issue on evolutionary problems.
\newblock \href {https://doi.org/https://doi.org/10.1016/j.cam.2006.03.040} {\path{doi:https://doi.org/10.1016/j.cam.2006.03.040}}.
\newline\urlprefix\url{https://www.sciencedirect.com/science/article/pii/S0377042706004183}

\bibitem{junca}
M.~G. Christian~Bourdarias, S.~Junca, Fractional bv spaces and applications to scalar conservation laws, J. Hyperbolic Differ. Equ. 11~(4) (2014) 655--677,.

\bibitem{BLNT}
M.~Bergounioux, A.~Leaci, G.~Nardi, F.~Tomarelli, \href{https://doi.org/10.1515/fca-2017-0049}{Fractional {S}obolev spaces and functions of bounded variation of one variable}, Fract. Calc. Appl. Anal. 20~(4) (2017) 936--962.
\newblock \href {https://doi.org/10.1515/fca-2017-0049} {\path{doi:10.1515/fca-2017-0049}}.
\newline\urlprefix\url{https://doi.org/10.1515/fca-2017-0049}

\bibitem{SKM}
O.~I.~M. Stephan G.~Samko, Anatoly A.~Kilbas, Fractional Integrals and Derivatives, Gordon and Breach Science Publishers, 1993.

\bibitem{Billingsley}
P.~Billingsley, \href{https://doi.org/10.1007/978-3-662-47507-2}{Convergence of Probability Measures}, 2nd Edition, Wiley Ser. Probab. Statist. Probab. Statist., Wiley-Intersci. Publ. John Wiley \& Sons, Inc., New York,, 1999.
\newblock \href {https://doi.org/10.1007/978-3-662-47507-2} {\path{doi:10.1007/978-3-662-47507-2}}.
\newline\urlprefix\url{https://doi.org/10.1007/978-3-662-47507-2}

\bibitem{MR3443431}
H.~Holden, N.~H. Risebro, \href{https://doi.org/10.1007/978-3-662-47507-2}{Front tracking for hyperbolic conservation laws}, 2nd Edition, Vol. 152 of Applied Mathematical Sciences, Springer, Heidelberg, 2015.
\newblock \href {https://doi.org/10.1007/978-3-662-47507-2} {\path{doi:10.1007/978-3-662-47507-2}}.
\newline\urlprefix\url{https://doi.org/10.1007/978-3-662-47507-2}

\end{thebibliography}






\end{document}